\documentclass[11pt]{article}

\usepackage[left=2.7cm,bottom=2.7cm,right=2.7cm,top=2.7cm]{geometry}

\usepackage{color,hyperref}

\usepackage[numbers]{natbib}
\setlength{\bibsep}{0pt plus 0.3ex}

\newif\ifworkinprogress
\workinprogresstrue

\ifworkinprogress
\newcommand{\RM}[1]{\textcolor{blue}{\textbf{[RM] #1}}}
\newcommand{\HW}[1]{\textcolor{green}{\textbf{[HW] #1}}} 		  
\else
\newcommand{\RM}[1]{}
\newcommand{\HW}[1]{} 	
\fi

\usepackage{csquotes}
\usepackage{verbatim}

\usepackage{epsfig}
\usepackage{color}

\usepackage{balance}  
\usepackage{algorithmic}
\usepackage{algorithm}
\usepackage{multirow}
\usepackage{graphicx}
\usepackage{amsmath}
\usepackage{amssymb}
\usepackage{amsfonts}
\usepackage{xspace}
\usepackage{url}
\usepackage{bbm}
\usepackage{stmaryrd}

\newcommand{\R}{\mathbb{R}}
\newcommand{\T}{\top}
\newcommand{\la}{\langle}

\newcommand{\lam}{\lambda}
\newcommand{\dt}{\delta}
\newcommand{\Dt}{\Delta}

\newcommand{\al}{\alpha}
\newcommand{\be}{\beta}
\newcommand{\ga}{\gamma}
\newcommand{\ep}{\epsilon}

\newcommand{\na}{\nabla}
\newcommand{\lt}{\left}

\newcommand{\argmin}{\mathop{{\rm argmin}}}

\newcommand{\td}{\tilde}
\newcommand{\wtd}{\widetilde}

\newcommand{\Pb}{\mathbb{P}}

\newcommand{\dist}{\mathsf{dist}}

\newcommand{\cF}{\mathcal{F}}

\newcommand{\cI}{\mathcal{I}}
\newcommand{\cJ}{\mathcal{J}}

\newcommand{\cR}{\mathcal{R}}
\newcommand{\cS}{\mathcal{S}}

\newcommand{\cU}{\mathcal{U}}

\newcommand{\cX}{\mathcal{X}}

\newtheorem{theorem}{Theorem}[section]
\newtheorem{corollary}[theorem]{Corollary}
\newtheorem{lemma}[theorem]{Lemma}

\newtheorem{example}{Example}[section]

\newtheorem{remark}[theorem]{Remark}
\newtheorem{assumption}[theorem]{Assumption}
\newtheorem{claim}[theorem]{Claim}

\def\endproof{\hfill $\Box$ \vskip 0.4cm}

\makeatletter
\@addtoreset{equation}{section}
\makeatother

\makeatletter

\makeatother

\begin{document}
	
	\title{Linear programming using diagonal linear networks
		\footnote{The authors acknowledge partial research support from the Office of Naval Research. }
	}
	\author{
		Haoyue Wang\thanks{MIT Operations Research Center (email: haoyuew@mit.edu).}
		\and
		Promit Ghosal\thanks{Department of Mathematics, Brandeis University (email: promit@brandeis.edu). Supported in part by NSF grant DMS-2153661.}
		\and
		Rahul Mazumder\thanks{MIT Sloan School of Management, Operations Research Center and MIT Center for Statistics \newline ({email: rahulmaz@mit.edu}).}
	}
	
	\date{}
	\maketitle	
	\begin{abstract}
		Linear programming has played a crucial role in shaping decision-making, resource allocation, and cost reduction in various domains. In this paper, we investigate the application of overparametrized neural networks and their implicit bias in solving linear programming problems. Specifically, our findings reveal that training diagonal linear networks with gradient descent, while optimizing the squared $L_2$-norm of the slack variable, leads to solutions for entropically regularized linear programming problems. Remarkably, the strength of this regularization depends on the initialization used in the gradient descent process. We analyze the convergence of both discrete-time and continuous-time dynamics and demonstrate that both exhibit a linear rate of convergence, requiring only mild assumptions on the constraint matrix. For the first time, we introduce a comprehensive framework for solving linear programming problems using diagonal neural networks. We underscore the significance of our discoveries by applying them to address challenges in basis pursuit and optimal transport problems.
	\end{abstract}

	\section{Introduction}\label{section: introduction}

	Large-scale optimization algorithms play a crucial role in the rapid advancement of modern machine learning and artificial intelligence. Interestingly, these algorithms often introduce an "implicit bias" toward specific solutions, even when such biases are not explicitly defined in the objective or problem formulation. For instance, when tackling the unregularized least squares problem, applying the gradient descent (GD) method typically converges to a solution with minimum Euclidean norm \cite{friedman2004gradient,yao2007early,ali2019continuous}, while the coordinate descent method tends to find a solution with the minimum $\ell_1$-norm \cite{gunasekar2018characterizing}. These algorithm-induced biases result in a form of "algorithmic regularization" that effectively constrains model complexity. This phenomenon offers insights into the generalization capabilities of deep neural networks trained using (stochastic) gradient descent.

	In pursuit of a deeper comprehension of training overparametrized neural networks, recent research has delved into the implicit bias of gradient descent when applied to a reparametrized model \cite{neyshabur2015search,zhang2017understanding,amid2020winnowing}. While traditional gradient descent seeks minimum $\ell_2$-norm solutions in the original space, performing gradient descent within a reparametrized space can induce various other forms of regularization. For instance, in the context of matrix factorization, applying gradient descent with a quadratic reparametrization yields an approximately low-rank solution, provided that the initialization is suitably small \cite{gunasekar2017implicit,li2018algorithmic}. Similarly, when dealing with linear regression, applying gradient descent under a quadratic (or even higher-order) reparametrization with a small initialization typically leads to a sparse solution \cite{Vaskevivious2019,pmlr-v125-woodworth20a,zhao2022high}. Moreover, for a broad range of reparametrizations, it has been established that employing gradient descent with an infinitesimal step size (referred to as gradient flow) is equivalent to utilizing mirror descent (MD) with infinitesimal stepsizes (referred to as mirror flow) and a specific reference divergence \cite{amid2020reparameterizing,azulay2021implicit,li2022implicit}.

	Most of the existing works on the reparametrized GD focus on the dynamics of GD with infinitesimal stepsizes. This approach provides an elegant characterization of the limit solution given in terms of a regularized optimization problem. In particular, this limit solution is dependent on the size of the initialization. It is not clear how discretization would change the trajectory and the limiting solution -- even basic convergence guarantees seem to be missing in the literature. 
	There are a few recent papers such as \cite{2023arXiv230208982E}, in which the authors studied reparametrized GD under a practical stepsize rule. As a tradeoff, these works make some regularity assumptions on the data that may not be readily verified. 
	Therefore, it is a  remaining problem to understand the reparametrized GD from a pure algorithmic perspective:
	\begin{itemize}
		\item How does reparametrized GD converge under discretized stepsizes? How to characterize the limiting solution?
	\end{itemize}
	In this work, we provide an answer to this question by studying the reparametrized GD under a classical setting of solving a linear program (LP):
	\begin{equation}\label{problem-1}
		\min_{x} ~ c^\T x \quad {\rm s.t.}~~  Ax=b, ~ x\ge 0. 
	\end{equation}
	where $A \in \R^{m\times n}$ with $m\le n$, $b\in \R^m$, and $c \in \R^n$ satisfies $c>0$, that is, all elements of $c$ are strictly positive. Note that we assume $c>0$ for simplicity -- see Remark~\ref{remark: general-LP-reduce} that how a general $c$ can be reduced to this case. 
	
	\subsection*{Our contributions}

	$\bullet$ We present a new and simple framework for solving linear programming problems by harnessing the implicit bias of overparametrized neural networks. Our analysis delves into the convergence behaviors of both discrete and continuous-time gradient descent dynamics, establishing a linear rate of convergence. Notably, our investigation of discrete-time dynamics, especially demonstrating global linear convergence (see Theorem~\ref{theorem: global-linear-conv}) for diagonal linear networks which we introduce below represents a unique and, to the best of our knowledge, an unprecedented achievement.
	
	\noindent
	$\bullet$ Our work uniquely enables us to elucidate the influence of gradient descent initialization on its convergence to specific solutions. A similar influence on the initialization was shown in \cite{pmlr-v125-woodworth20a} for general initialization. However, leveraging this initialization insight, we illustrate how the ultimate outcome of gradient descent dynamics in least squares problems can correspond to the solution of a regularized linear programming problem.
	
	\noindent
	$\bullet$ We conducted a comparative analysis of gradient descent dynamics with mirror gradient descent for the basis pursuit problem and the Sinkhorn algorithm for the optimal transport problem. While there are notable similarities between gradient descent on diagonal networks and these previous algorithms, we demonstrate their distinctions in Section~\ref{subsection: connections with mirror descent} and Section~\ref{subsection: connections with Sinkhorn} through explicit elucidation and supported by simulations in Section~\ref{sec:Exp}.

	Although the formulation \eqref{problem-1} was not explicitly discussed in recent literature on reparametrized GD, it includes the well-studied example of sparse linear regression that appeared in \cite{Vaskevivious2019,pmlr-v125-woodworth20a,zhao2022high}, as shown by the following example.

	\begin{example}\label{example: basis-pursuit}
		(Basis pursuit) For sparse linear regression with data $X \in \R^{n\times p}$ and $y\in \R^n$, the basis pursuit estimator finds an exact fitting with minimum $\ell_1$-norm:
		\begin{equation}\label{basis-pursuit}
			\min_{\be}~ \| \be \|_1 \quad {\rm s.t.} ~ X\be = y.
		\end{equation}
		By a transformation $\be = w - z$ with both $w\ge 0$ and $z \ge 0$, problem \eqref{basis-pursuit} is equivalent to
		\begin{equation}
			\min_{w,z} ~ 1_n^\T (w+z) \quad {\rm s.t.} ~ X w- X z = y, ~ w\ge 0 , ~  z \ge 0.
		\end{equation}
		which is in the form of \eqref{problem-1} with $A = [X,-X]$, $b = y$, $c = 1_{2n}$ and $x = (w;z)$. 
	\end{example}

	For the sparse linear regression problem, recent literature \cite{Vaskevivious2019,pmlr-v125-woodworth20a,zhao2022high} considered reparametrizing $\be = u\circ u - v\circ v$ and running gradient descent steps on the nonconvex loss $\| X (u\circ u - v\circ v) - y \|_2^2$ for variables $u,v \in \R^n$, where $\circ$ denotes the element-wise product of vectors.
	It was shown that the limiting solution of the GD steps is an approximate solution to \eqref{basis-pursuit}, given that we take the initialization $u = v = \al 1_n$ for some small value $\al>0$ and infinitesimal stepsizes. 
	
	In this paper, we adopt a similar reparametrization $x = u\circ u$ for the more general problem \eqref{problem-1}, and show that the limit of GD on $\| A (u\circ u) - b\|_2^2$ is an approximate solution of \eqref{problem-1} for proper initialization. Note that our reparametrization coincides with the one considered in the literature for the basis pursuit problem, following the reduction in Example~\ref{example: basis-pursuit}. 
	This specific form of reparameterization is commonly referred to as "Diagonal Linear Networks" (DLNs).
	This quadratic reparameterization is loosely likened to the impact of composing two layers in a neural network. 
	We give a strict analysis of the limiting behavior of GD, without any heuristic assumptions such as the infinitesimal stepsizes or the existence of the limit point -- see Sections~\ref{section: reparametrized gradient descent} and \ref{section: convergence guarantees} for details. 
	
	\begin{example}
		(Optimal transport) Given starting and target distributions $w \in \Dt_m$ and $v\in \Dt_n$ where $\Dt_k:=\{x\in \R^k_+~|~ 1_k^\T x=1\}$, and given a cost matrix $C \in \R^{m\times n}$, 
		the optimal transport from $u$ to $v$ is given by the solution of
		\begin{equation}\label{optimal-transport}
			\min_{X\in \R^{m\times n}} ~ \la C , X \rangle \quad ~ {\rm s.t. }~ 1_m^\T X = w^\T , ~ X1_n = v, ~ X \ge 0 . 
		\end{equation}
		Note that $\la X, 1_m 1_n^\T \rangle = 1_m^\T X 1_n = 1_m^\T v = 1$. So 
		we can assume $C_{ij}>0$ for all $(i,j) \in [m]\times [n]$, since otherwise we can add a multiple of $\la X, 1_m 1_n^\T \rangle $ into the objective. 
	\end{example}

	Optimal transport is a classical problem in mathematics, economics and operations research dating back to 1780s \cite{monge1781memoire,villani2009optimal}. Recently it has regained popularity in the machine learning community with successful applications in computer vision, clustering, sampling etc. For large-scale optimal transport, a popular algorithm is the Sinkhorn algorithm~\cite{sinkhorn1967diagonal,cuturi2013sinkhorn,peyre2019computational}. In particular, the Sinkhorn algorithm finds an approximate solution of \eqref{optimal-transport} with an \textit{entropy regularization}. For the general LP~\eqref{problem-general-LP}, the entropy regularized LP solves the problem
	\begin{equation}\label{entropy-regularized-LP}
		\min_{z} ~ c^\T z + \lam \sum_{i=1}^n  z_i \log (z_i) \quad {\rm s.t.}~~  \td Az= \td b, ~ z\ge 0. 
	\end{equation}
	where $\lam >0$ are fixed parameters. If $\lam $ is small, the solution of \eqref{entropy-regularized-LP} is an approximate solution of \eqref{problem-general-LP}. See \cite{weed2018explicit} for a precise analysis.  
	Interestingly, as we show in Sections~\ref{section: reparametrized gradient descent} and \ref{section: convergence guarantees}, using gradient descent on the quadratic reparametrized LP automatically leads to an approximate solution with entropy regularization. See Sections~\ref{subsection: connections with mirror descent} and \ref{subsection: connections with Sinkhorn} for further discussions on the connections of reparametrized GD, mirror descent, and the Sinkhorn algorithm.
	
	In the program~\eqref{problem-1}, the cost vector is assumed to have positive coordinates. This appears to be restricted for general LP with possible negative costs. However, below we show that a general LP can be reduced to the form \eqref{problem-1} via a big-M constraint.

	\begin{remark}\label{remark: general-LP-reduce}
		(Reduction of general LP) Consider a general linear program in the standard form
		\begin{equation}\label{problem-general-LP}
			\min_{z} ~ \td c^\T z \quad {\rm s.t.}~~  \td Az= \td b, ~ z\ge 0. 
		\end{equation}
		where $\td A \in \R^{m\times n}$, $\td b\in \R^n$, and
		where $\td c \in \R^n$ is a general cost vector, possibly with negative elements. 
		Note that any linear program can be reduced to the above standard form \cite{bertsimas1997introduction}. 
		Suppose \eqref{problem-general-LP} has a solution $z^*$ (not necessarily unique), and suppose we can find a number $M$ such that $1_n^\T z^* \le M$, then problem \eqref{problem-general-LP} is equivalent to
		\begin{equation}
			\min_{z, t} ~ \td c^\T z \quad {\rm s.t.}~~  \td Az= \td b,  ~ 1_n^\T z + t = M, ~ z\ge 0, ~ t \ge 0.
		\end{equation}
		We can take $\lam>0$ such that $\td c + \lam 1_n>0 $. Adding $\lam$ multiples of the equality constraint $1_n^\T z + t = M$ into the objective, we have an equivalent form 
		\begin{equation}
			\min_{z, t} ~ (\td c + \lam 1_n )^\T z + \lam  t  \quad {\rm s.t.}~~  \td Az=\td b,  ~ 1_n^\T z + t = M, ~ z\ge 0, ~ t \ge 0.
		\end{equation}
		The formlation above is in the form of \eqref{problem-1} with $A = \begin{bmatrix}
			\td A  & 0 \\ 1_n^\T  & 1
		\end{bmatrix}$, $b = [\td b; M]$, $c = [\td c+\lam 1_n; \lam]$ and $x = [z;t]$.
	\end{remark}

	By the argument above, a general linear program in standard form can be reduced to the problem \eqref{problem-1}. In general, 
	the reduction requires finding a big-M constraint that is valid for an optimal solution. 
	For many applications, such a parameter $M$ can be easily computed from $\td A$ and $\td b$.

	\subsection*{Related Works}
	Implicit bias of the training algorithm of a neural network plays a significant role in converging toward a specific global minimum. This has been observed in the past in many occasions in training deep neural network models including \cite{neyshabur2015search,zhang2017understanding,keskar2017large,soudry2018the} where optimization algorithms, frequently variations of gradient descent, appears to favor solutions with strong generalization properties. This intriguing generalization has also been seen in a rather simpler architecture of neural networks such as diagonal linear networks. 
	The 2-layer diagonal linear network under scrutiny in our study is a simplified neural network that has garnered considerable recent interest \cite{pmlr-v125-woodworth20a,Vaskevivious2019,haochen21a,Pillaud-Vivien2022}. Despite its simplicity, it intriguingly exhibits training behaviors akin to those observed in much more intricate architectures, thus allowing it to be used as a proxy model for a deeper understanding of neural network training. Some prior research \cite{pmlr-v125-woodworth20a,pesme2021implicit, pmlr-v162-nacson22a} has extensively examined the gradient flow and stochastic gradient flow dynamics in diagonal linear networks, particularly in the context of the basis pursuit problem. These studies have shown that the limit of gradient flow or stochastic gradient flow solves an optimization problem that interpolates between $\ell_1$-norm (the so-called 'rich' regime) and $\ell_2$-norm (neural tangent kernel regime) minimization. While our work shares some commonalities with these findings, we broaden the scope by connecting the limit of continuous and discrete-time gradient descent with a significantly broader class of optimization problems through reevaluation of the influence of initialization on the dynamics. We plan to investigate the impact of step size and small initialization on the generalization properties of the output of diagonal linear networks in future research, building on recent works \cite{2023arXiv230208982E,2023arXiv230400488P,2022arXiv220814673B} in this area.

	\section{Reparametrized gradient descent}\label{section: reparametrized gradient descent}
	
	The problem of finding a feasible solution for problem~\eqref{problem-1} can be formulated as 
	\begin{equation}\label{problem: feas1}
		\min_{x\in \R^n} ~~ g(x) := \frac{1}{2}\| Ax-b \|_2^2 \quad {\rm s.t.} ~ x\ge 0 . 
	\end{equation}
	If problem~\eqref{problem-1} is feasible, then the optimal value of \eqref{problem: feas1} is $0$. Since problem \eqref{problem: feas1} has multiple optimal solutions, using different algorithms for \eqref{problem: feas1} leads to different feasible solutions for \eqref{problem-1}. In this paper, we consider reparametrizing the nonnegative variable $x = u \circ u$, leading to a non-convex optimization problem:
	\begin{equation}\label{problem: feas2}
		\min_{u\in \R^n} ~~ f(u) := \frac{1}{2}\| A(u\circ u)-b \|_2^2 \quad {\rm s.t.} ~ u\ge 0 .
	\end{equation}
	Let $\R^n_+ := \{ x \in \R^n~|~ x\ge 0 \}$ and $\R^n_{++} := \{ x \in \R^n~|~ x> 0 \} $. 
	We use gradient descent to solve \eqref{problem: feas2}:
	
	\begin{algorithm}[h!]
		\caption{Gradient Descent for Problem~\eqref{problem: feas2}}
		\label{alg1}
		\begin{itemize}
			\item Initialize from some $u^0 \in \R^n_{++}$. 
			
			\item For $k=0,1,2,....$ make the updates:
			\begin{equation}\label{gd-updates}
				\begin{aligned}
					u^{k+1} = & u^k - \eta_k \na f(u^k) = & 
					u^k \circ \Big(  1_n - 2\eta_k A^\T r^k \Big)
				\end{aligned}
			\end{equation}
			where $r^k:= Ax^k - b$ with $x^k := u^k\circ u^k$, and
			$\eta_k>0$ is the stepsize. 
		\end{itemize}
	\end{algorithm}
	
	Note that in the updates \eqref{gd-updates}, the nonnegativity constraint $u\ge 0$ is not explicitly imposed. Therefore, to ensure that the iterates $u^k$ satisfy the nonnegativity constraint, we need to take the stepsizes $\eta_k$ properly. The following lemma provides a practical choice of stepsizes that ensures the nonnegativity of iterates and guarantees the decrease of objective value. 
	Let $L:= \| A \|_2^2$ ($\|A\|_2$ is the operator norm of $A$).

	\begin{lemma}\label{lemma: per-iter-decrease}
		(Per-iteration decrease)
		Given any $k\ge 0$, 
		suppose $u^k>0$, and 
		suppose we take $\eta_k>0$ such that 
		\begin{equation}\label{stepsize-condition-1}
			\eta_k \le \min \left\{ \frac{1}{4 \| A^\T r^k \|_\infty}, \ \frac{1}{5L \| u^k\|_{\infty}^2 } \right\} .
		\end{equation}
		Then, for all integer $k>0$, we have  $	\frac{1}{2} u^k \le u^{k+1} \le \frac{3}{2} u^k$, and
		\begin{equation}\label{periter-decrease}
			f(u^{k+1}) - f(u^k) \le -\eta_k \| \na g(x^k) \circ u^k \|_2^2 = -\frac{\eta_k}{2} \| \na f(u^k)  \|_2^2 . 
		\end{equation}
	\end{lemma}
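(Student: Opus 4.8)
The key observation is that although $f$ is nonconvex in $u$, the quantity actually being tracked is $g(x)=\tfrac12\|Ax-b\|_2^2$ evaluated at $x=u\circ u$, and $g$ is a \emph{quadratic} in $x$, so its second-order expansion around $x^k$ is exact. Setting $x^{k+1}:=u^{k+1}\circ u^{k+1}$ and $d:=x^{k+1}-x^k$, we have
\[
f(u^{k+1})-f(u^k)=g(x^k+d)-g(x^k)=\la A^\T r^k, d\ra+\tfrac12\|Ad\|_2^2 ,
\]
with no remainder term. So the proof splits into four pieces: (i) the box bound $\tfrac12 u^k\le u^{k+1}\le\tfrac32 u^k$; (ii) an explicit formula for $d$; (iii) an upper bound on the linear term $\la A^\T r^k,d\ra$; and (iv) an upper bound on the quadratic term $\tfrac12\|Ad\|_2^2$. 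The two terms in the stepsize rule \eqref{stepsize-condition-1} are exactly what makes (iii) and (iv) go through.

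For (i): by \eqref{gd-updates}, $u^{k+1}=u^k\circ(1_n-2\eta_k A^\T r^k)$, and since $u^k>0$ and $2\eta_k|(A^\T r^k)_i|\le 2\eta_k\|A^\T r^k\|_\infty\le\tfrac12$ by the first term of \eqref{stepsize-condition-1}, each coordinate multiplier lies in $[\tfrac12,\tfrac32]$; hence the box bound and, in particular, $u^{k+1}>0$. For (ii): squaring coordinatewise, $x^{k+1}_i=x^k_i(1-2\eta_k(A^\T r^k)_i)^2$, so $d_i=-4\eta_k\,x^k_i(A^\T r^k)_i\big(1-\eta_k(A^\T r^k)_i\big)$, i.e.\ $d=-4\eta_k\,x^k\circ(A^\T r^k)+4\eta_k^2\,x^k\circ(A^\T r^k)\circ(A^\T r^k)$. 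Writing $Q:=\|\na g(x^k)\circ u^k\|_2^2=\sum_i x^k_i(A^\T r^k)_i^2$ and using $x^k\circ(A^\T r^k)=u^k\circ(u^k\circ\na g(x^k))$, one gets for (iii) that $\la A^\T r^k,d\ra=-4\eta_k Q+4\eta_k^2\sum_i x^k_i(A^\T r^k)_i^3$; bounding $|\sum_i x^k_i(A^\T r^k)_i^3|\le\|A^\T r^k\|_\infty Q$ and invoking $\eta_k\|A^\T r^k\|_\infty\le\tfrac14$ yields $\la A^\T r^k,d\ra\le-3\eta_k Q$.

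Step (iv) is the only substantive estimate. First $\|Ad\|_2^2\le L\|d\|_2^2$ by definition of $L$. To bound $\|d\|_2^2$ by $Q$, use either the formula from (ii) with $|1-\eta_k(A^\T r^k)_i|\le\tfrac54$, or the factorization $d_i=\big(u^{k+1}_i-u^k_i\big)\big(u^{k+1}_i+u^k_i\big)$ together with the box bound $u^{k+1}_i+u^k_i\le\tfrac52 u^k_i$; either way $|d_i|\le 5\eta_k x^k_i|(A^\T r^k)_i|$, so
\[
\|d\|_2^2\le 25\eta_k^2\sum_i (x^k_i)^2(A^\T r^k)_i^2\le 25\eta_k^2\|u^k\|_\infty^2\sum_i x^k_i(A^\T r^k)_i^2=25\eta_k^2\|u^k\|_\infty^2 Q ,
\]
using $x^k\ge 0$ and $(x^k_i)^2=x^k_i\cdot x^k_i\le\|u^k\|_\infty^2 x^k_i$; crucially this stays dimension-free because we never pass from the $\ell_2$ norm to the $\ell_\infty$ norm. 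By the second term of \eqref{stepsize-condition-1}, $L\eta_k\|u^k\|_\infty^2\le\tfrac15$, hence $\tfrac12\|Ad\|_2^2\le\tfrac{25}{2}L\eta_k^2\|u^k\|_\infty^2 Q\le\tfrac52\eta_k Q$. Combining with (iii), $f(u^{k+1})-f(u^k)\le(-3+\tfrac52)\eta_k Q=-\tfrac12\eta_k Q$; a marginally more conservative numerical constant in \eqref{stepsize-condition-1} (e.g.\ $\tfrac{25}{4}L$ in place of $5L$, so that $L\eta_k\|u^k\|_\infty^2\le\tfrac{4}{25}$) upgrades this to $-\eta_k Q$, which is \eqref{periter-decrease}; the reformulation in terms of $\na f(u^k)$ then follows from the chain-rule identity $\na f(u^k)=2\,u^k\circ\na g(x^k)$.

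The main obstacle is thus step (iv): controlling the \emph{exact} quadratic term $\tfrac12\|Ad\|_2^2$. The tempting alternative — bounding the local Lipschitz constant of $\na f$ over the box $[\tfrac12 u^k,\tfrac32 u^k]$ and applying a descent lemma — does work, but it forces one to estimate $\|A^\T A\delta\|_\infty$ for perturbations $\delta$ of $x^k$, and $\|A^\T A\|_{\infty\to\infty}$ is not controlled by $L=\|A\|_2^2$ without a dimension factor. Keeping everything in $\ell_2$ (bound $\|Ad\|_2^2\le L\|d\|_2^2$ first, then bound $\|d\|_2^2$ by $Q$ via $x^k\ge 0$ and $\|u^k\|_\infty^2=\|x^k\|_\infty$) is what removes the dimension dependence, and the two-part stepsize rule \eqref{stepsize-condition-1} is precisely calibrated so that the cubic error term in (iii) and the $L\eta_k\|u^k\|_\infty^2$ error term in (iv) are both small enough for the leading $-4\eta_k Q$ to survive.
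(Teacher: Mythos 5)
Your overall route is the same as the paper's: expand $g$ around $x^k$ (the paper uses the $L$-smoothness descent inequality, which for the quadratic $g$ is exactly your bound $\tfrac12\|Ad\|_2^2\le\tfrac{L}{2}\|d\|_2^2$), use the first stepsize condition to get the box bound $\tfrac12u^k\le u^{k+1}\le\tfrac32u^k$, and use the second condition to absorb the second-order term. The substantive issue is quantitative: with the stepsize rule \eqref{stepsize-condition-1} exactly as stated, your decoupled bookkeeping (linear term $\le -3\eta_k Q$ after paying $\eta_k Q$ for the cubic correction, quadratic term $\le\tfrac52\eta_k Q$) only yields $f(u^{k+1})-f(u^k)\le-\tfrac12\eta_k Q$, i.e.\ half of the claimed decrease \eqref{periter-decrease}. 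Your proposed remedy is to tighten the constant $5L$ in \eqref{stepsize-condition-1}, but that proves a different statement from the lemma, and the constant in \eqref{periter-decrease} (equivalently $-\tfrac{\eta_k}{2}\|\na f(u^k)\|_2^2$) is exactly what is invoked downstream (Theorem~\ref{theorem: local convergence}, Corollary~\ref{cor1}, and the linear-rate argument), so the shortfall is not cosmetic: as written, the lemma with its stated hypothesis is not established.

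The missing ingredient is the paper's grouping of the weight. Write $d=x^{k+1}-x^k=-2\eta_k\,(u^{k+1}+u^k)\circ u^k\circ\na g(x^k)$ and set $D:=D\big((u^{k+1}+u^k)\circ u^k\big)$, so the linear term is exactly $-2\eta_k\,\na g(x^k)^\T D\,\na g(x^k)$ and the second-order term is $2L\eta_k^2\,\na g(x^k)^\T D^2\na g(x^k)$. Instead of bounding the latter against $Q$, bound it against the \emph{same} weighted form: since $D^2\preceq\max_i\{(u^{k+1}_i+u^k_i)u^k_i\}\,D\preceq\tfrac52\|u^k\|_\infty^2 D$, the condition $\eta_k\le\tfrac{1}{5L\|u^k\|_\infty^2}$ gives a second-order term at most $\eta_k\,\na g(x^k)^\T D\,\na g(x^k)$, i.e.\ at most half the linear term with the identical weight. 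Summing leaves $-\eta_k\,\na g(x^k)^\T D\,\na g(x^k)$, and only now does one relax the weight via $u^{k+1}+u^k\ge u^k$ (so $D\succeq D(u^k\circ u^k)$) to obtain $-\eta_k\|\na g(x^k)\circ u^k\|_2^2$. This keeps the factor $(1-\eta_k (A^\T r^k)_i)$ attached coordinatewise, so the worst case of the cubic correction ($(A^\T r^k)_i>0$) and the worst case of your quadratic bound ($(A^\T r^k)_i<0$) can never be charged simultaneously, which is precisely the cancellation your separate bounds against $Q$ give away; with it, the full constant $-\eta_k Q$ follows under \eqref{stepsize-condition-1} unchanged.
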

	
	In particular, since $u^0>0$, if we take stepsizes $\eta_k$ satisfying the condition \eqref{stepsize-condition-1}, then it holds $u^k>0$ for all $k\ge 0$. Since both the vectors $A^\T r^k$ and $u^k$ are available in the progress of the algorithm, and an upper bound of $L$ can be estimated initially, the stepsize rule \eqref{stepsize-condition-1} is not hard to implement. Suppose the iterations $\{u^k\}_{k\ge 0}$ are bounded (which will be formally proved in Section~\ref{section: convergence guarantees}), then we can take $\{\eta_k\}_{k\ge 0}$ uniformly bounded away from $0$ such that \eqref{stepsize-condition-1} is still satisfied.

	\subsection{A continuous viewpoint}
	
	Before the rigorous analysis of Algorithm~\ref{alg1}, we first investigate its continuous version. 
	With infinitesimal stepsizes, the gradient descent updates \eqref{gd-updates} reduce to the gradient flow:
	\begin{equation}\label{gradient-flow}
		\frac{d}{dt}u(t) =  - 2u(t) \circ \left( A^\T r(t) \right)
	\end{equation}
	where $r(t) = A(u(t)\circ u(t))-b $.
	In particular, the integrated form of \eqref{gradient-flow} can be written as
	\begin{equation}
		u(t) = u(0) \circ \exp \left( -2A^\T \int_0^t r(s) \ ds \right) .
		\nonumber
	\end{equation}
	Therefore, for a positive initialization $u(0)>0$, the path $u(t)$ will remain in the positive orthant. This is consistent with the result in Lemma~\ref{lemma: per-iter-decrease} that the iterates remain positive as long as stepsizes are taken small enough. 
	
	The following theorem characterizes the limit of gradient flow under some heuristic assumptions of the convergence.

	\begin{theorem}\label{theorem: flow}
		Suppose we initialize the gradient flow \eqref{gradient-flow} with $u(0) = \al $ for some $\al\in (0,e^{-1})^n$. Suppose $\lim\limits_{t\rightarrow\infty}\int_0^t r(s) \ ds $ exists, then $u(t)$ converges as $t\rightarrow \infty$. 
		Let $u^{*}:= \lim_{t\rightarrow \infty} u(t)$ and $x^*= u^* \circ u^*$, 
		then $x^*$ is the solution of 
		\begin{equation}\label{quadratic-limit}
			\min_{x\in \R^n}~~
			\sum_{i=1}^n x_i \log \Big( \frac{x_i}{\al_i^2} \Big) - x_i
			\quad {\rm s.t.} ~ Ax=b, ~ x\ge 0. 
		\end{equation}
		In particular, given $\lam>0$, if we take $\al_i = \exp ( - {c_i}/(2\lam) )$ for all $i\in [n]$, then $x^*$ is the solution of 
		\begin{equation}\label{quadratic-limit-2}
			\min_{x\in \R^n}~~
			c^\T x + \lam \sum_{i=1}^n \Big(x_i \log(x_i) - x_i \Big)
			\quad {\rm s.t.} ~ Ax=b, ~ x\ge 0. 
		\end{equation}
	\end{theorem}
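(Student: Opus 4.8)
The plan is to use the explicit integrated form of the gradient flow together with the assumed convergence of $\int_0^t r(s)\,ds$ to pin down the limit $u^*$ in closed form, and then to verify that $x^* = u^*\circ u^*$ satisfies the (necessary and sufficient) KKT conditions of the convex program \eqref{quadratic-limit}.

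\textbf{Step 1 (closed form of the limit).} Writing the flow \eqref{gradient-flow} componentwise gives $\frac{d}{dt}\log u_i(t) = -2(A^\T r(t))_i$, hence $u_i(t) = \al_i \exp\!\big(-2\,(A^\T\!\int_0^t r(s)\,ds)_i\big)$. Setting $\rho := \lim_{t\to\infty}\int_0^t r(s)\,ds$ (which exists by hypothesis), continuity of $\exp$ yields $u(t)\to u^* := \al\circ \exp(-2A^\T\rho)\in\R^n_{++}$, and therefore $x(t)=u(t)\circ u(t)\to x^* := u^*\circ u^*$ with $x_i^* = \al_i^2\exp(-4(A^\T\rho)_i)>0$.

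\textbf{Step 2 (feasibility).} Since $x(t)\to x^*$, we get $r(t)=Ax(t)-b\to Ax^*-b$. If $Ax^*-b$ were nonzero, some coordinate of $\int_0^t r(s)\,ds$ would diverge, contradicting the existence of $\rho$; hence $Ax^*=b$, and with $x^*>0$ the point $x^*$ is feasible for \eqref{quadratic-limit}.

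\textbf{Step 3 (optimality via KKT).} The objective $h(x)=\sum_i x_i\log(x_i/\al_i^2)-x_i$ is convex (strictly convex on $\R^n_{++}$) with $\na h(x)_i = \log(x_i/\al_i^2)$. From the closed form in Step 1, $\na h(x^*)_i = -4(A^\T\rho)_i$, i.e. $\na h(x^*) = A^\T(-4\rho)\in\ran(A^\T)$. Because $x^*>0$, complementary slackness forces the nonnegativity multipliers to vanish, so $(x^*,\nu)$ with $\nu=-4\rho$ satisfies the KKT system $\na h(x^*)=A^\T\nu$, $Ax^*=b$, $x^*\ge 0$; since the problem is convex, $x^*$ is its unique minimizer. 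Finally, substituting $\al_i = \exp(-c_i/(2\lam))$ gives $\al_i^2=\exp(-c_i/\lam)$, so $h(x)=\sum_i x_i\log x_i + \lam^{-1}c^\T x - 1_n^\T x$; multiplying by $\lam>0$ (which leaves the argmin unchanged) turns this into $c^\T x + \lam\sum_i(x_i\log x_i - x_i)$, which is the objective of \eqref{quadratic-limit-2}.

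The argument is short once the convergence hypothesis is granted; the two points requiring care are the deduction $r(t)\to 0$ (using convergence of $x(t)$ and continuity, rather than any a priori decay of $r$) and the observation that the explicit form of $x^*$ places $\na h(x^*)$ \emph{exactly} in $\ran(A^\T)$ — this is precisely the mechanism that makes the implicit regularization the entropy-type penalty appearing in \eqref{quadratic-limit} rather than some other functional. I would note in passing that the restriction $\al\in(0,e^{-1})^n$ is not actually needed for this flow-level statement and is only carried along for consistency with the discrete-time analysis.
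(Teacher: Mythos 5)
Your proof is correct and takes essentially the same route as the paper: the integrated form of the flow gives $\log\big(x^*/(\al\circ\al)\big)=A^\T\nu$ with $\nu=-4\int_0^\infty r(s)\,ds$, which is read off as the KKT condition of the convex program \eqref{quadratic-limit}, and the substitution $\al_i=\exp(-c_i/(2\lam))$ yields \eqref{quadratic-limit-2}. Your Step 2 establishing feasibility $Ax^*=b$ from convergence of the integral (and your remark that $\al\in(0,e^{-1})^n$ is not needed at the flow level) only fills in details the paper's argument leaves implicit.
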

	Theorem~\ref{theorem: flow} shows an interesting relationship of the limiting solution and the initialization. The limiting solution is an optimal solution of an entropy-regularized LP. The cost of this regularized LP depends on the initialization $\al$. In particular, if we take $\al$ following $\al_i = \exp ( -{c_i}/(2\lam) )$ with a small value of $\lam$ (which corresponds to small values of $\al_i$), then the limiting solution is an approximate solution of the LP \eqref{problem-1} since the entropy regularization term in \eqref{quadratic-limit-2} can be made small by choosing $\lambda$ small. On the other hand, if we take larger $\lam$ (which corresponds to larger initialization), then the entropy term in \eqref{quadratic-limit-2} is not negligible, and the limiting solution is pushed away from the boundary of $\R^n_+$. Such a dependence on initialization has been observed in the special case of basis pursuit \cite{pmlr-v125-woodworth20a}. 
	
	Note that in Theorem~\ref{theorem: flow}, we have made the technical assumption that $\lim\limits_{t\rightarrow\infty}\int_0^t r(s) \ ds $ exists, following a similar assumption in the literature \cite{pmlr-v125-woodworth20a}. However, as far as we know, this has not been properly justified by a rigorous analysis. In this paper, We give a rigorous analysis directly for the discretized version -- see Section~\ref{section: convergence guarantees} for details. 
	
	\subsection{Connections to mirror descent}\label{subsection: connections with mirror descent}
	
	The connections between mirror descent and reparametrized gradient descent have been extensively studied in previous works \cite{amid2020reparameterizing,azulay2021implicit,li2022implicit}. When the entropy function is used as a mirror (relative smoothness) function, mirror descent yields multiplicative updates of the iterates \cite{}. Let $H(x):= \sum_{i=1}^n x_i \log(x_i)$ for $x \in \R^n_+ $, and let $D_H(x,y) := H(x) - H(y) - \la \na H(y), x-y \rangle$ be the Bregman divergence of $H$. 
	Specifically, for problem \eqref{problem: feas1}, the mirror descent has updates:
	\begin{equation}
		\td x^{k+1} \in \argmin_{x \in \R^n_+} \left\{  \langle \na g(\td x^k) , x-\td x^k \rangle +  L_k \cdot D_H(x, \td x^k)   \right\}
	\end{equation}
	where $L_k>0$ is a parameter controlling the stepsizes. Equivalently (by the optimality condition), 
	\begin{equation}
		-	\na g(\td x^k) 
		=   L_k( \na H(\td x^{k+1})  - 	\na H(\td x^{k}) )=  L_k \log \Big( \frac{\td x^{k+1}}{\td x^k} \Big) = 2 L_k \log \Big( \frac{\td u^{k+1}}{\td u^k} \Big)
	\end{equation}
	where $\td u^k := \sqrt{\td x^k}$. Therefore, the updates in $\td u^k$ are given by
	\begin{equation}\label{mirror}
		\td u^{k+1} = \td u^k \circ \exp \Big( -\frac{1}{2 L_k} \na g(\td x^k) \Big) .
	\end{equation}
	In particular, if we take $L_k = 1/(2\eta_k)$ with a small value of $\eta_k \rightarrow 0$ (i.e. large value of $L_k \rightarrow \infty $), then by the first-order Taylor approximation, 
	\begin{equation}\label{approx-mirror}
		\td u^{k+1} \approx \td u^k \circ \Big( 1_n - \eta_k \na g(\td x^k)  \Big) .
	\end{equation}
	The RHS of \eqref{approx-mirror} is the same as the update in \eqref{gd-updates}. Therefore, with infinitesimal stepsizes, quadratic reparametrized GD for \eqref{problem: feas1} is equivalent to mirror descent with the entropy as the reference function. 
	However, for any finite value of $L_k$, 
	the RHS of \eqref{approx-mirror} is strictly smaller than the RHS of \eqref{mirror}, and reparametrized GD is different to mirror descent. 
	
	Mirror descent (with entropy as reference function) is also known as exponentiated gradient descent \cite{kivinen1997exponentiated,ghai2020exponentiated,amid2020winnowing}, and both mirror descent and Algorithm~\ref{alg1} belongs to the more general class of multiplicative weight algorithm \cite{arora2012multiplicative} --  both algorithms involve a rescaling of positive iterates. 
	Although the convergence properties of the mirror descent is well known \cite{beck2003mirror,duchi2010composite,lu2018relatively}, the convergence of reparametrized GD is less understood. We provide a rigorous analysis of the convergence of reparametrized GD in Section~\ref{section: convergence guarantees}.

	\subsection{Connections to the Sinkhorn algorithm}\label{subsection: connections with Sinkhorn}
	
	Algorithm~\ref{alg1} has some similarity to the Sinkhorm algorithm when applied to the optimal transport problem. The Sinkhorm algorithm solves the entropy-regularized LP \eqref{entropy-regularized-LP} for the optimal transport problem:
	\begin{equation}\label{OT-regularized}
		\min_X ~ \la C , X \rangle + \lam \sum_{i\in [m], j\in [n]} \Big( X_{ij} \log(X_{ij}) - X_{ij} \Big)
		\quad ~ {\rm s.t. }~ 1_m^\T X = w^\T , ~ X1_n = v, ~ X \ge 0 . 
	\end{equation}
	By the KKT condition of the convex program \eqref{OT-regularized}, it can be shown (see e.g. \cite{peyre2019computational}) that the (unique) optimal solution $X^*$ has the structure:
	\begin{equation}
		X^*_{ij} =  p^*_i K_{ij} q^*_j \quad \forall i\in [m], ~ j\in [n] . 
	\end{equation}
	where $K_{ij} := e^{- C_{ij}/\lam }$, and 
	$p^* \in \R^m_+$ and $q^*\in \R^n_+$ are two unknown vectors. 
	The Sinkhorn algorithm initializes with $p^{(0)} = 1_m$, $q^{(0)} = 1_n$, $X^{(0)} = K$, and iteratively rescales the rows and columns to match the constraints:
	\begin{equation}\label{sinkhorn-updates}
		p{(t)} = \frac{w}{Kq^{(t-1)}} , ~~~~ q^{(t)} = \frac{v}{ K^\T p^{(t)} } , ~~~~ X^{(t)} = D(p^{(t)}) K D(q^{(t)})
	\end{equation}
	for $t\ge 1$, where the division is elementwise.
	It is known that the iterates $p^{(t)}$ and $q^{(t)}$ converge to $p^*$ and $q^*$ respectively, hence $X^{(t)}$ converges to the optimal solution $X^*$. 
	See e.g. \cite{altschuler2017near,dvurechensky2018computational,lin2019efficient} for analysis of the converge rate.
	
	It is worth noting that the Sinkhorn algorithm initializes with $X^{(0)}_{ij} := e^{- C_{ij}/\lam }$, which is the same initialization as Algorithm \ref{alg1} discussed in Theorem~\ref{theorem: flow} in order that the limit solution (for the continuous version) is the entropy regularized LP \eqref{quadratic-limit-2}. 
	Moreover, for the optimal transport problem, the updates of Algorithm~\ref{alg1} can be rewritten as
	\begin{equation}
		(U^{k+1})_{ij} = (U^k)_{ij} \Big(   1 - \eta_k ( g_i + h_j )  \Big)
	\end{equation}
	where $g_i := e_i^\T X^k 1_n - w_i$, $h_j:= 1_m^\T X^k e_j - v_j$, and
	we have used $U^k$ and $X^k$ instead of $u^k$ and $x^k$ to highlight that the variables are matrices. In particular, if the stepsizes $\eta_k$ is very small, then 
	\begin{equation}
		(U^{k+1})_{ij} \approx (U^k)_{ij} \Big(   1 - \eta_k ( g_i + h_j ) + \eta_k^2 g_ih_j  \Big) = (U^k)_{ij} ( 1-\eta_k g_i ) (1-\eta_k h_j) . 
	\end{equation}
	Hence the updates above can also be viewed as row and column rescalings, although with different rescaling rules. 
	
	Finally, we note that the Sinkhorn updates are equivalent to iterative Bregman projections under KL divergence \cite{benamou2015iterative}, hence in some literature, it is also called a mirror descent type method. See \cite{aubin-frankowski2022mirror} for recent results on the connections between the Sinkhorn algorithm and mirror descent.

	\section{Convergence guarantees}\label{section: convergence guarantees}
	
	In this section, we provide a rigorous analysis of the convergence of discretized GD for the nonconvex problem~\ref{problem: feas2}. 
	Problem \eqref{problem: feas2} is a non-convex optimization problem. For a general non-convex optimization problem, it is possible that the GD converges to a saddle point or a local minimum. However, in this section, we prove that GD must converge to the global minimum with a linear rate. Moreover, we will provide a characterization of the limit solution of (discretized) GD, similar to that in Theorem~\ref{theorem: flow}.

	\subsection{Global convergence}\label{subsection: global convergence}
	
	We make the following assumptions on the problem parameters $A$ and $b$. 
	
	\begin{assumption}\label{ass}
		(1) Matrix $A \in \R^{m\times n}$ has full row rank. 
		
		(2) Problem \eqref{problem-1} is strictly feasible, i.e., there exists vector $x\in \R^n$ satisfying $x> 0$ and $Ax=b$. 
	\end{assumption}
	
	Note that Assumption~\ref{ass} (1) is a standard assumption for general linear programs. 
	If $A$ does not satisfy Assumption~\ref{ass} (1), 
	it contains some redundant rows, which can be detected and removed by a basic linear algebra procedure. Assumption~\ref{ass} (2) is also a mild assumption, which is satisfied for most LP problems in practice. 
	
	To establish the global convergence of Algorithm~\ref{alg1}, we first prove the boundedness of the iterates, given that stepsizes are taken properly.

	\begin{lemma}\label{lemma:iterates-boundedness}
		(Boundedness of iterates) 
		Suppose Assumption~\ref{ass} (1) holds true.
		Suppose we take $\eta_k>0$ such that \eqref{stepsize-condition-1} holds for all $k\ge 0$. 
		Then it holds
		\begin{equation}
			\sup_{k\ge 0}	\| u^k\|_2^2 \le R^2:= 
			\sqrt{n} \max_{ 
				\substack{
					\cI \subseteq [n] \\  A_{\cI} \ {\rm invertible} }} \| (A_\cI)^{-1} \|_2 \lt( \| r^0 \|_2 + \| b\|_2 \right) + en \| u^0\|_2^2 
		\end{equation}
		where $A_{\cI}$ is the submatrix of $A$ with columns in $\cI$. 
	\end{lemma}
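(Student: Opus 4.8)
The plan is to convert the multiplicative structure of the iterates into a coordinatewise bound of the form $x^k_i\le x^0_i\,e^{-(A^\T\zeta^k)_i}$ for a suitable $\zeta^k\in\R^m$, and then bound $\|u^k\|_2^2=\|x^k\|_1$ by splitting $x^k$ through the Minkowski--Weyl decomposition of the perturbed feasibility polyhedron into a ``vertex'' part (controlled by the first term of $R^2$) and a ``recession cone'' part (controlled by the second). First I would record two consequences of Lemma~\ref{lemma: per-iter-decrease}: under the stepsize rule \eqref{stepsize-condition-1} each multiplier $1-2\eta_j(A^\T r^j)_i$ is positive (in fact $\ge\tfrac12$), so by the initialization $u^0>0$ we have $u^k>0$ for all $k$; and $f$ is non-increasing along the iterates, so $\|r^k\|_2\le\|r^0\|_2$ for every $k$. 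Unrolling \eqref{gd-updates} coordinatewise gives
\begin{equation}\label{eq:unroll}
x^k_i=x^0_i\prod_{j=0}^{k-1}\bigl(1-2\eta_j(A^\T r^j)_i\bigr)^2,\qquad i\in[n],
\end{equation}
and applying $\log(1-t)\le -t$ to the (positive) factors of \eqref{eq:unroll} and exponentiating yields $x^k_i\le x^0_i\exp\bigl(-(A^\T\zeta^k)_i\bigr)$ with $\zeta^k:=4\sum_{j=0}^{k-1}\eta_j r^j$.

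Next I would decompose. The iterate $x^k$ lies in $P_k:=\{x\in\R^n:Ax=b+r^k,\ x\ge0\}$, which is nonempty (it contains $x^k$) and contains no line (it sits inside $\R^n_+$); by the Minkowski--Weyl theorem $x^k=p^k+q^k$ where $p^k$ is a convex combination of vertices of $P_k$ and $q^k\ge0$ lies in the recession cone $\{d:Ad=0,\ d\ge0\}$, so $0\le q^k\le x^k$ and $Aq^k=0$. Since $p^k,q^k\ge0$, $\|u^k\|_2^2=\|x^k\|_1=\|p^k\|_1+\|q^k\|_1$. For the vertex part: every vertex of $P_k$ is a basic feasible solution whose support extends to some $\cI\subseteq[n]$ with $A_\cI$ invertible --- here Assumption~\ref{ass}(1) enters, guaranteeing the $m$ equality rows are independent so such a basis exists --- and equals $(A_\cI)^{-1}(b+r^k)$ on $\cI$; hence its $\ell_1$-norm, and therefore $\|p^k\|_1$, is at most $\sqrt n\,\max_{\cI}\|(A_\cI)^{-1}\|_2\bigl(\|r^0\|_2+\|b\|_2\bigr)$, which is exactly the first term of $R^2$.

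The heart of the argument is the recession part. Writing $a_i:=(A^\T\zeta^k)_i$, we have $0\le q^k_i\le x^k_i\le x^0_i e^{-a_i}$ by the first paragraph, and --- crucially --- $\sum_i q^k_i a_i=(Aq^k)^\T\zeta^k=0$. Split $[n]$ by the sign and size of $a_i$: on $\{a_i\ge0\}$ use $q^k_i\le x^0_i$; on $\{-1\le a_i<0\}$ use $q^k_i\le e\,x^0_i$; on $\{a_i<-1\}$ use $q^k_i\le -a_i q^k_i$ together with the identity and $\max_{a\ge0}ae^{-a}=e^{-1}$ to get $\sum_{a_i<-1}q^k_i\le\sum_{a_i>0}a_i q^k_i\le e^{-1}\|x^0\|_1$. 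Summing the three pieces and using $\|x^0\|_1=\|u^0\|_2^2$ gives $\|q^k\|_1\le(1+e+e^{-1})\|u^0\|_2^2\le en\|u^0\|_2^2$ (the recession cone is trivial when $n\le1$, and $1+e+e^{-1}\le 2e$ otherwise). Combining with the vertex bound yields $\sup_k\|u^k\|_2^2\le R^2$.

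I expect the delicate step to be this last one. When $a_i$ is very negative the bound $q^k_i\le x^0_i e^{-a_i}$ is useless, so individually the recession coordinates can be enormous; it is only the orthogonality $\la q^k,A^\T\zeta^k\ra=0$ --- a direct consequence of $Aq^k=0$ --- that prevents all of them from blowing up at once, and one must realize that the Minkowski--Weyl split is precisely what isolates the coordinates where this balance argument is needed, leaving everything else to standard basic-feasible-solution estimates.
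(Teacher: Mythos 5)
Your proposal is correct and follows essentially the same route as the paper: decompose $x^k$ over the polyhedron $\{x\ge 0 : Ax = Ax^k\}$ into a convex combination of basic feasible solutions (bounded via $\max_{\cI}\|(A_\cI)^{-1}\|_2$ and the monotonicity $\|r^k\|_2\le\|r^0\|_2$) plus a recession-cone part $q$ with $Aq=0$, and then control $q$ using the unrolled multiplicative bound $x^k\le x^0\circ\exp(-A^\T\zeta^k)$ together with the vanishing of $q^\T A^\T\zeta^k$. The only difference is bookkeeping on the recession part: the paper multiplies the logarithmic bound by $q$ and applies the pointwise entropy lower bound $t\log(t/\al)\ge -\al/e$, whereas you split coordinates by the size of $(A^\T\zeta^k)_i$ and use $a e^{-a}\le e^{-1}$ --- an equivalent estimate that in fact yields a slightly smaller constant than the stated $en\|u^0\|_2^2$ (with the $n=1$ case correctly noted as trivial).
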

	Note that the boundedness of the iterates cannot be directly obtained via a level set argument, because the level set of the objective function $f(u)= \| A (u\circ u) - b\|_2^2$ might be unbounded, e.g. in the case when there exists a nonzero vector $x\in \R^n_+$ satisfying $Ax=0$. Instead, the proof of Lemma~\ref{lemma:iterates-boundedness} relies on an in-depth analysis of the dynamic of iterates itself -- see Section~\ref{appsec: proof of lemma iterates boundedness} for details. 
	
	Lemma~\ref{lemma:iterates-boundedness} ensures that the iterates will not diverge to infinity, which has an immediate implication on the stepsizes that can be taken.

	\begin{corollary}\label{cor1}
		Suppose Assumption~\ref{ass} (1) holds true. Then
		there exists $ \bar \eta = \bar\eta (A,b,R) >0 $ such that if we take $\eta_k \le \bar\eta$ for all $k\ge 0$, then 
		the conclusion of Lemma~\ref{lemma: per-iter-decrease} holds true.
	\end{corollary}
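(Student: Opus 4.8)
The plan is to eliminate the iterate‑dependent quantities $\|A^\T r^k\|_\infty$ and $\|u^k\|_\infty^2$ from the stepsize condition \eqref{stepsize-condition-1} by replacing them with uniform bounds coming from the a priori estimate $R^2$ of Lemma~\ref{lemma:iterates-boundedness}, and then to close the resulting circularity by induction on $k$. Concretely, I would set
\[
\bar\eta := \min\left\{ \frac{1}{4\left(LR^2 + \sqrt{L}\,\|b\|_2\right)},\ \frac{1}{5LR^2} \right\},
\]
which depends only on $(A,b,R)$ and is strictly positive: $u^0\in\R^n_{++}$ forces $R^2 \ge en\|u^0\|_2^2 > 0$ (the remaining term in the definition of $R^2$ is a maximum over the nonempty set of invertible submatrices, hence nonnegative), and $A$ full row rank forces $L=\|A\|_2^2>0$.

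The core estimate is: whenever $\|u^k\|_2^2 \le R^2$, we have $\|u^k\|_\infty^2 \le \|u^k\|_2^2 \le R^2$, and since $\|x^k\|_2 = \|u^k\circ u^k\|_2 = \big(\sum_i (u^k_i)^4\big)^{1/2} \le \|u^k\|_2^2 \le R^2$, also $\|r^k\|_2 \le \|A\|_2\|x^k\|_2 + \|b\|_2 \le \sqrt{L}R^2 + \|b\|_2$ and therefore $\|A^\T r^k\|_\infty \le \|A^\T r^k\|_2 \le \|A\|_2\|r^k\|_2 \le LR^2 + \sqrt{L}\|b\|_2$. Hence the right‑hand side of \eqref{stepsize-condition-1} is at least $\bar\eta$, so any choice $\eta_k \le \bar\eta$ satisfies \eqref{stepsize-condition-1} at step $k$ whenever $\|u^k\|_2^2\le R^2$.

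With this in hand I would prove by induction on $k$ that, for a stepsize sequence with $\eta_k \le \bar\eta$ for all $k$, the following holds at every step: $u^k>0$, $\|u^k\|_2^2 \le R^2$, and \eqref{stepsize-condition-1} holds at step $k$; consequently Lemma~\ref{lemma: per-iter-decrease} applies at step $k$, giving $\frac{1}{2} u^k \le u^{k+1}\le \frac{3}{2} u^k$ (in particular $u^{k+1}>0$) and the decrease \eqref{periter-decrease}. For the base case $k=0$: $u^0>0$ by assumption and $\|u^0\|_2^2 \le en\|u^0\|_2^2 \le R^2$, so \eqref{stepsize-condition-1} holds at $k=0$ by the core estimate. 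For the inductive step, assuming the three properties at all indices $\le k$: Lemma~\ref{lemma: per-iter-decrease} at step $k$ gives $u^{k+1}>0$; since \eqref{stepsize-condition-1} has now been verified at indices $0,\dots,k$ and $u^0,\dots,u^{k+1}$ are determined by $u^0$ and $\eta_0,\dots,\eta_k$ alone (via the update $u^{j+1}=u^j\circ(1_n-2\eta_jA^\T r^j)$), the bound $\|u^{k+1}\|_2^2 \le R^2$ follows from Lemma~\ref{lemma:iterates-boundedness} applied to this finite horizon; the core estimate then yields \eqref{stepsize-condition-1} at step $k+1$, closing the induction.

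The main obstacle is exactly this interdependence: Lemma~\ref{lemma:iterates-boundedness} delivers $\|u^k\|_2^2\le R^2$ only under the hypothesis that \eqref{stepsize-condition-1} already holds at all steps, whereas verifying \eqref{stepsize-condition-1} is what we need that bound for. The induction above resolves it provided Lemma~\ref{lemma:iterates-boundedness} is used in its causal form — the value of $u^K$ depends only on $\eta_0,\dots,\eta_{K-1}$, so its proof bounds $\|u^K\|_2^2$ using only the stepsize conditions at indices $<K$, which the inductive hypothesis supplies. If one prefers a strictly black‑box invocation of Lemma~\ref{lemma:iterates-boundedness}, an alternative is a maximal‑prefix argument: the family of finite stepsize prefixes bounded by $\bar\eta$ along which \eqref{stepsize-condition-1} holds is nonempty and closed under truncation, and the core estimate forbids a maximal finite one; but the inductive route is cleanest.
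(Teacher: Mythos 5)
Your proposal is correct and follows essentially the same route as the paper: derive a uniform stepsize threshold $\bar\eta$ depending only on $(A,b,R)$ from the a priori radius of Lemma~\ref{lemma:iterates-boundedness}, then conclude that condition \eqref{stepsize-condition-1} holds at every step so Lemma~\ref{lemma: per-iter-decrease} applies (the paper takes $\bar\eta$ as a minimum over $\bar B_R(0)$ of the exact expressions, while you use the slightly cruder explicit bounds $\|u^k\|_\infty^2\le R^2$ and $\|A^\T r^k\|_\infty\le LR^2+\sqrt{L}\|b\|_2$, which is equivalent in effect). Your explicit induction resolving the apparent circularity between Lemma~\ref{lemma:iterates-boundedness}'s hypothesis and the stepsize condition is a welcome sharpening of the paper's terse two-line argument, which invokes that lemma directly and leaves the causal/finite-horizon justification implicit.
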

	In particular, we can take constant stepsizes $\eta_k = \eta$ for some $\eta \in (0,\bar \eta]$ such that the per-iteration decrease in \eqref{periter-decrease} holds true. In the following, 
	we present the main result of this section under the assumption that constant stepsizes are used.

	\begin{theorem}\label{theorem: global-linear-conv}
		(Global linear convergence) Suppose Assumption~\ref{ass} holds true, and suppose we take $\eta_k = \eta$ for some $\eta \in (0,\bar\eta]$ and for all $k\ge 0$. Then there exists a constant $\rho = \rho(A,b,u^0, \eta) \in (0,1) $ such that 
		\begin{equation}
			f(u^k) \le ( 1- \rho )^k f(u^0), \quad \forall ~ k\ge 1. 
		\end{equation}
	\end{theorem}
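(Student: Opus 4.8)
The plan is to combine the per-iteration decrease of Lemma~\ref{lemma: per-iter-decrease} with a Polyak–{\L}ojasiewicz-type inequality along the trajectory. Writing $\na g(x^k)=A^\T r^k$, the per-iteration decrease reads $f(u^{k+1})-f(u^k)\le-\eta\,\|\na g(x^k)\circ u^k\|_2^2=-\eta\,(r^k)^\T A\,\diag(x^k)\,A^\T r^k$. If we can show the iterates stay uniformly away from the boundary of $\R^n_+$, i.e. $\min_i x^k_i\ge\underline x>0$ for all $k$, then $\diag(x^k)\succeq\underline x\,I$, and since $A$ has full row rank (Assumption~\ref{ass}(1)) we get $(r^k)^\T A\,\diag(x^k)\,A^\T r^k\ge\underline x\,\sigma_{\min}(A)^2\|r^k\|_2^2=2\underline x\,\sigma_{\min}(A)^2 f(u^k)$, with $\sigma_{\min}(A)>0$. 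Hence $f(u^{k+1})\le(1-\eta c)f(u^k)$ with $c:=2\underline x\,\sigma_{\min}(A)^2>0$; taking $\rho:=\min\{1/2,\eta c\}\in(0,1)$ (if $\eta c>1$ the first step already forces $f\equiv0$) and iterating yields the theorem.

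The heart of the argument is therefore the uniform lower bound $\min_i x^k_i\ge\underline x$, and this is where strict feasibility (Assumption~\ref{ass}(2)) is used. Fix $\bar x>0$ with $A\bar x=b$, let $H(x)=\sum_i x_i\log x_i$, and track the Bregman divergence $\Phi_k:=D_H(\bar x,x^k)=\sum_i\bigl(\bar x_i\log(\bar x_i/x^k_i)-\bar x_i+x^k_i\bigr)$, which is finite at $k=0$ since $x^0=u^0\circ u^0>0$. I would show $\Phi_k$ is non-increasing. With $x^{k+1}_i=x^k_i(1-t_i)^2$ where $t_i:=2\eta(A^\T r^k)_i$ and $|t_i|\le1/2$ (from the stepsize rule \eqref{stepsize-condition-1}), a direct computation gives
\[
\Phi_{k+1}-\Phi_k=2\sum_i(\bar x_i-x^k_i)t_i+\sum_i(\bar x_i+x^k_i)t_i^2+2\sum_i\bar x_i e_i,\qquad e_i:=-\log(1-t_i)-t_i-\tfrac{t_i^2}{2},
\]
with $|e_i|\le\tfrac23|t_i|^3\le\tfrac13 t_i^2$. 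The linear term equals $4\eta\langle\bar x-x^k,A^\T r^k\rangle=4\eta\langle A\bar x-Ax^k,r^k\rangle=-4\eta\|r^k\|_2^2=-8\eta f(u^k)$, while the remaining terms are $O(\eta^2)$: using $\|x^k\|_\infty\le\|u^k\|_2^2\le R^2$ from Lemma~\ref{lemma:iterates-boundedness} and $\|A^\T r^k\|_2^2\le L\|r^k\|_2^2=2Lf(u^k)$, they are at most $C_0\eta^2 L(\|\bar x\|_\infty+R^2)f(u^k)$ for an absolute constant $C_0$. Thus, provided $\bar\eta$ is small enough that $\bar\eta\le 4/\bigl(C_0 L(\|\bar x\|_\infty+R^2)\bigr)$ (shrinking the $\bar\eta$ of Corollary~\ref{cor1} if needed, which only depends on $A,b,R$), we obtain $\Phi_{k+1}-\Phi_k\le-4\eta f(u^k)\le0$, so $\Phi_k\le\Phi_0=:\Gamma$ for all $k$.

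Given $\Phi_k\le\Gamma$, the lower bound follows since each summand of $D_H$ is nonnegative: for every $i$, $\bar x_i\log(\bar x_i/x^k_i)\le\Gamma+\bar x_i-x^k_i\le\Gamma+\bar x_i$, hence $x^k_i\ge\bar x_i\exp(-1-\Gamma/\bar x_i)>0$; set $\underline x$ to be the minimum of these quantities over $i$. Combined with the first paragraph this proves the theorem. The main obstacle is precisely the monotonicity of $\Phi_k$: the continuous-time analogue (monotonicity of the Bregman divergence along mirror flow) is classical, but in discrete time one must control the second- and higher-order Taylor terms of $\log(1-t_i)$ and show they are dominated by the first-order descent term $-8\eta f(u^k)$, which is what forces the extra smallness of the stepsize and is where both the boundedness constant $R$ of Lemma~\ref{lemma:iterates-boundedness} and the strictly feasible point $\bar x$ enter. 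A secondary point to keep straight is that $\rho$ genuinely depends on $\underline x$, hence through $\Gamma$ on $u^0$ and $\eta$, as the statement asserts.
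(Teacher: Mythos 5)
Your proposal is correct in its essentials, but it reaches the key intermediate fact --- that the iterates stay uniformly away from the boundary of $\R^n_+$ --- by a genuinely different and considerably shorter route than the paper. The paper's proof first establishes global convergence of $f(u^k)$ to any level $\ep$ via a lengthy case analysis on the stationary structure (Lemma~\ref{lemma: global-conv-detailed} and Claim~\ref{claim3}), then a local sublinear rate through a {\L}ojasiewicz gradient inequality for the quartic polynomial $f$ (Theorem~\ref{theorem: local convergence}), deduces summability $\sum_k \|r^k\|_2^2 < \infty$ (Corollary~\ref{cor: sum-bound}), and only then proves $u^k \ge \sigma 1_n$ (Lemma~\ref{lemma: iteration-lb}) by pairing the strictly feasible point with the integrated multiplicative bounds of Lemma~\ref{lemma: log-bound}; the final step of your argument (full row rank plus the lower bound gives a PL-type contraction $f(u^{k+1}) \le (1-2\eta\sigma_{\min}(A)^2\underline{x})f(u^k)$) coincides with the paper's. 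Your Lyapunov argument --- one-step monotonicity of the entropy Bregman divergence $D_H(\bar x, x^k)$ to a strictly feasible $\bar x$, with the Taylor remainder of $\log(1-t_i)$ dominated by the exact first-order term $-8\eta f(u^k)$ --- bypasses the {\L}ojasiewicz machinery entirely and even yields $\sum_k f(u^k) \le \Phi_0/(4\eta)$ as a free byproduct, which is a real simplification; I checked the identity $\Phi_{k+1}-\Phi_k = 2\la \bar x - x^k, t\ra + \sum_i(\bar x_i + x^k_i)t_i^2 + 2\sum_i \bar x_i e_i$ and the bounds you state, and they are sound given $|t_i|\le 1/2$ from \eqref{stepsize-condition-1} and $\|x^k\|_\infty \le R^2$ from Lemma~\ref{lemma:iterates-boundedness}. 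The one caveat is the stepsize threshold: your descent of $\Phi_k$ requires in addition $\eta \lesssim 1/\bigl(L(\|\bar x\|_\infty + R^2)\bigr)$, which is not implied by the $\bar\eta$ fixed in \eqref{def: bar-eta}, so strictly you prove the theorem for a possibly smaller admissible range of constant stepsizes (of the same dependency class $A,b,R$), whereas the paper's argument covers all $\eta \in (0,\bar\eta]$ as defined; you flag this yourself, and it is a weakening of constants rather than a structural gap, but if you want the statement verbatim you would either have to fold this extra condition into the definition of $\bar\eta$ or choose/rescale the strictly feasible point $\bar x$ so that the extra requirement is absorbed.
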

	Theorem~\ref{theorem: global-linear-conv} shows that for the nonconvex problem~\ref{problem: feas2}, as long as we take stepsizes small enough, GD always converges to the global optimal solution (which is $0$), and has a linear convergence rate. 
	Note that for simplicity, we make the assumption that constant stepsizes are used. The same result still holds true if one uses varying stepsizes $\{\eta_k\}_{k\ge 0}$ satisfying \eqref{stepsize-condition-1} and $\inf_{k\ge 0} \eta_k >0$. In particular, for the convergence of GD, it is not needed to take diminishing stepsizes that vanish as $k\rightarrow \infty$.

	Note that our analysis is significantly different from (and stronger than) the results that can be obtained by an application of the general convergence results of GD for non-convex problems. See Section~\ref{appsec: compare-with-nonconvex-GD-theory} for a discussion.

	\subsection{Limit point as an approximate solution}
	
	For the gradient flow, we have shown in Theorem~\ref{theorem: flow} that its limiting point is the optimal solution of an entropy-regularized LP. In the following, we show a similar result for the discrete version.

	\begin{theorem}\label{theorem: limit-point-approx-solution}
		Suppose Assumption~\ref{ass} holds true, and suppose we take $u^0 = \al \in (0,1/2)^n$ and $\eta_k = \eta$ for some $\eta \in (0,\bar\eta]$ and for all $k\ge 0$. 
		Then the limit $u^\infty:= \lim_{k\rightarrow 0} u^k$ exists. 
		Let $x^\infty := u^\infty \circ u^{\infty}$. Then there exists a constant $C = C(A,b,R)$ and a vector $w\in \R^n_+$ with $\| w\|_1 \le C$ such that $x^\infty$ is the optimal solution of 
		\begin{equation}\label{discrete-approx-problem-1}
			\min_{x} ~ 
			\sum_{i=1}^n x_i \log \Big( \frac{x_i}{\al_i^2} \Big) - x_i 
			+ 
			\eta \log(1/\underline \al) w_ix_i \qquad 
			{\rm s.t. } ~ Ax=b, ~ x\ge 0.  
		\end{equation}
		where $\underline \al = \min_{i\in [n]} \al_i$. In particular, given $\lam>0$, if we take $ \al_i = \exp ( - c_i/(2\lam) ) $ and denote $\bar c:= \max_{i\in [n]} c_i$, then $x^\infty$ is the optimal solution of 
		\begin{equation}\label{discrete-approx-problem-2}
			\min_{x} ~ c^\T x + \lam \sum_{i=1}^n \Big( x_i \log(x_i)-x_i\Big) + \frac{\eta}{2} ( \lam + \bar c ) w^\T x 
			\qquad 
			{\rm s.t. } ~ Ax=b, ~ x\ge 0.  
		\end{equation}
	\end{theorem}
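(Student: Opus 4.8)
The plan is to work in logarithmic coordinates, in which the multiplicative update \eqref{gd-updates} becomes additive and telescopes: the existence of the limit and the regularized problem it solves can then be read off directly, while the quantitative control of $w$ comes from a mirror-descent–type energy estimate.

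\emph{Step 1: existence and feasibility of $x^\infty$.} Since $u^0>0$ and \eqref{stepsize-condition-1} holds, Lemma~\ref{lemma: per-iter-decrease} gives $u^k>0$ and $2\eta\|A^\T r^k\|_\infty\le\tfrac12$ for every $k$, so coordinatewise $\log u^{k+1}_i-\log u^k_i=\log(1-2\eta(A^\T r^k)_i)$ and
\[
\log u^k_i=\log\al_i+\sum_{j=0}^{k-1}\log\!\big(1-2\eta(A^\T r^j)_i\big).
\]
Each increment is bounded by $4\eta\|A\|_2\|r^j\|_2$, and Theorem~\ref{theorem: global-linear-conv} gives $\|r^j\|_2\le\sqrt{2f(u^0)}\,(1-\rho)^{j/2}$, so the series converges absolutely. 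Hence $(\log u^k)_k$ is Cauchy, $u^k\to u^\infty>0$, $x^\infty:=u^\infty\circ u^\infty>0$, and $f(u^k)\to0$ forces $Ax^\infty=b$; thus $x^\infty$ is strictly feasible for \eqref{problem-1}.

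\emph{Step 2: identification of $x^\infty$ via KKT.} Put $e^j_i:=\log(1-2\eta(A^\T r^j)_i)+2\eta(A^\T r^j)_i$; a Taylor estimate gives $e^j_i\le0$ and $|e^j_i|\le 4\eta^2(A^\T r^j)_i^2$. Letting $k\to\infty$ in the telescoped identity (squared, to pass from $u$ to $x$),
\[
\log\!\Big(\frac{x^\infty_i}{\al_i^2}\Big)=-4\eta\,(A^\T s)_i+2E_i,\qquad s:=\sum_{j\ge0}r^j,\quad E_i:=\sum_{j\ge0}e^j_i\le0,
\]
both sums converging by the linear rate. This is exactly the first-order optimality condition of \eqref{discrete-approx-problem-1} at the strictly feasible point $x^\infty$, with equality multiplier $\mu=-4\eta s$ (so $(A^\T\mu)_i=-4\eta(A^\T s)_i$) and weights $w_i:=2|E_i|/(\eta\log(1/\underline\al))\ge0$; note $\underline\al<\tfrac12$, so $\log(1/\underline\al)>0$. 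Since the objective of \eqref{discrete-approx-problem-1} is strictly convex and coercive on the feasible polyhedron and the constraints are linear, $x^\infty$ is its unique minimizer. Specializing $\al_i=\exp(-c_i/(2\lam))$ turns $x_i\log(x_i/\al_i^2)-x_i$ into $\lam^{-1}c_ix_i+x_i\log x_i-x_i$ and gives $\log(1/\underline\al)=\bar c/(2\lam)$, which yields \eqref{discrete-approx-problem-2} after rescaling $w$ by the constant $\bar c/(\lam+\bar c)\le1$ (which only shrinks $\|w\|_1$).

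\emph{Step 3: the bound $\|w\|_1\le C(A,b,R)$ — the main obstacle.} Here $\|w\|_1=\tfrac{2}{\eta\log(1/\underline\al)}\sum_{i,j}|e^j_i|$ and $\sum_i|e^j_i|\le 4\eta^2\|A\|_2^2\|r^j\|_2^2$, so it suffices to prove $\sum_{j\ge0}\|r^j\|_2^2\le C(A,b,R)\,\eta^{-1}\log(1/\underline\al)$. Bounding this sum via the linear rate would produce a constant depending on $\rho$, hence on $\al$, which is not allowed; instead I would run a mirror-descent energy argument with the entropy potential $\Phi(x):=\sum_i[x_i\log(x_i/\al_i^2)-x_i]$. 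The update satisfies $\na\Phi(x^{k+1})-\na\Phi(x^k)=-4\eta\,\na g(x^k)+2e^k$ with $e^k\le0$, so the three-point identity together with $\la\na g(x^k),x^*-x^k\ra=-\|r^k\|_2^2$ (valid for any feasible $x^*$, since $Ax^*=b$) gives
\[
D_\Phi(x^*,x^{k+1})=D_\Phi(x^*,x^k)-4\eta\|r^k\|_2^2+\big[\,4\eta\la\na g(x^k),x^k-x^{k+1}\ra-D_\Phi(x^{k+1},x^k)\,\big]-2\la e^k,x^*-x^{k+1}\ra .
\]
Summing over $k$ and using $D_\Phi\ge0$: the bracketed term is $O(\eta^2\|A\|_2^2 f(u^k))$, because $\|A^\T r^k\|_\infty\le\tfrac1{4\eta}$ by \eqref{stepsize-condition-1} and $\Phi$ is strongly convex in $\ell_1$ (with constant controlled by $n$ and $R$) on the bounded region containing all iterates (Lemma~\ref{lemma:iterates-boundedness}); the last term is $\le 2\|x^*\|_\infty\|e^k\|_1=O(\eta^2\|A\|_2^2\|r^k\|_2^2)$ since $e^k\le0$ and $x^*,x^{k+1}\ge0$. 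For $\eta$ small (of order $c(A,b,R)$, foldable into $\bar\eta$) these error sums eat at most half of $4\eta\sum_k\|r^k\|_2^2$, leaving $2\eta\sum_k\|r^k\|_2^2\le D_\Phi(x^*,x^0)$. Taking $x^*$ to be a fixed feasible point from Assumption~\ref{ass}(2) (whose $\ell_1$-norm is controlled by $A,b$), one computes $D_\Phi(x^*,x^0)=\sum_i[x^*_i\log(x^*_i/\al_i^2)-x^*_i+\al_i^2]\le 2\|x^*\|_1\log(1/\underline\al)+C(A,b,n)\le C'(A,b,n)\log(1/\underline\al)$, using $\log(1/\underline\al)\ge\log2$. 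Combining, $\sum_j\|r^j\|_2^2\le C'(A,b,n)\,\eta^{-1}\log(1/\underline\al)$, so the two factors $\log(1/\underline\al)$ — one in $D_\Phi(x^*,x^0)$, one in the denominator of $w$ — cancel, giving $\|w\|_1\le C(A,b,n)=C(A,b,R)$. This cancellation, which the energy identity exposes but the convergence rate alone does not, is the crux of the argument.
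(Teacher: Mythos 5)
Your proposal is correct in its main structure and, in the step that actually matters, takes a genuinely different route from the paper. Steps 1--2 (existence of $u^\infty>0$ via absolute convergence of the telescoped logarithms using the linear rate of Theorem~\ref{theorem: global-linear-conv}, and identification of $x^\infty$ through the KKT system with multiplier $-4\eta\sum_j r^j$ and the nonpositive second-order remainders $e^j$) coincide with the paper's argument, which uses Lemma~\ref{lemma: log-bound} to sandwich $\log(x^\infty/x^0)$ between $A^\T\nu-\eta\td w$ and $A^\T\nu$. The divergence is in how $\sum_j\|r^j\|_2^2$ is bounded so that $\|w\|_1$ is independent of $\al$: the paper splits the sum at the initialization-dependent time $k_0\approx\eta^{-1}\log(1/\underline\al)K(A,b,R,\dt)$ furnished by Lemma~\ref{lemma: global-conv-detailed}, bounds the head crudely by monotonicity of $\|r^j\|_2$ and the tail by the \L{}ojasiewicz-based Corollary~\ref{cor: sum-bound}, and the factor $\log(1/\underline\al)$ coming from the head cancels against the $\log(1/\underline\al)$ in the definition of $w$; you instead run a Bregman energy estimate with the entropy potential $\Phi$ anchored at a fixed feasible $x^*$, where the same cancellation appears because $D_\Phi(x^*,x^0)=O(\log(1/\underline\al))$ (note $\na\Phi(x^0)=0$). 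Your route is more self-contained for this step — it bypasses Lemma~\ref{lemma: global-conv-detailed} and the local \L{}ojasiewicz machinery entirely — and it exposes the mechanism behind the $\log(1/\underline\al)$ cancellation more transparently; the paper's route has the advantage that it works verbatim for every $\eta\in(0,\bar\eta]$ with the $\bar\eta$ of Corollary~\ref{cor1}, since it reuses bounds already established.

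Two small caveats on your Step 3. First, your "error sums eat at most half" step genuinely requires $\eta\le c(A,b,R)$ for a constant that can be smaller than the paper's $\bar\eta$: with only $\eta\le 1/(5L\|u^k\|_\infty^2)$, the crude bound $4\eta\la\na g(x^k),x^k-x^{k+1}\ra\le 4\eta\|r^k\|_2^2$ exactly matches the main decrease term, so without shrinking $\eta$ (or extracting help from $-D_\Phi(x^{k+1},x^k)$) the telescoping yields nothing. Folding this into a redefined $\bar\eta(A,b,R)$ is harmless for the substance of the theorem, but it does mean you prove the statement with a (possibly) smaller stepsize cap than the one quoted. Second, the bracketed term should be bounded by the explicit update identity $x^k-x^{k+1}=2\eta(u^{k+1}+u^k)\circ u^k\circ\na g(x^k)$, giving $4\eta\la\na g(x^k),x^k-x^{k+1}\ra\le 40\eta^2 LR^2 f(u^k)$ after dropping $-D_\Phi\le 0$; the justification you sketch via $\|A^\T r^k\|_\infty\le 1/(4\eta)$ and $\ell_1$-strong convexity of $\Phi$ does not by itself produce an $O(\eta^2)$ bound. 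Neither point affects the viability of the plan.
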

	
	As shown by Theorem~\ref{theorem: limit-point-approx-solution}, if we take the initialization properly, 
	the limit point $x^\infty$ is an optimal solution of the entropy regularized LP given by \eqref{discrete-approx-problem-2}. The objective function in \eqref{discrete-approx-problem-2} consists of three terms. The first term is the linear objective $c^\T x$ as in \eqref{problem-1}. The second term is an entropy regularization, which also appears in the limit characterization of the gradient flow (in Theorem~\ref{theorem: flow}). In particular, if we take a small value of $\lam$, this entropy regularization is small. The third term is an ``error term" from the discrete stepsizes, which
	does not appear in the counterpart of gradient flow. This error term is proportional to the stepsize $\eta$. If $\eta\rightarrow 0$, then this error term vanishes, which is consistent with our result in Theorem~\ref{theorem: flow}.

	\section{Experiments}\label{sec:Exp}
	
	We verify the theoretical findings via simulations. We generate a random matrix $A\in \R^{m\times n}$ with i.i.d. $N(0,1)$ entries, and a random vector $\bar x \in \R^n_+$ with i.i.d. entries following uniform distribution on $[0,1]$. Then we generate $b:= A \bar x$. In the following, we set $m = 300$ and $n=3000$. 
	
	\textbf{Comparison with mirror descent}. 
	First, we numerically verify the connections between Algorithm~\ref{alg1} and the mirror descent method discussed in Section~\ref{subsection: connections with mirror descent}. In Section~\ref{subsection: connections with mirror descent}, it is shown that the iterates of Algorithm~\ref{alg1} is close to that of mirror descent if the stepsizes are small, while the two algorithms are different for larger stepsizes.

	\begin{figure}[h!]
		\centering
		\scalebox{0.9}{\begin{tabular}{cc}
				\includegraphics[width=0.5\textwidth]{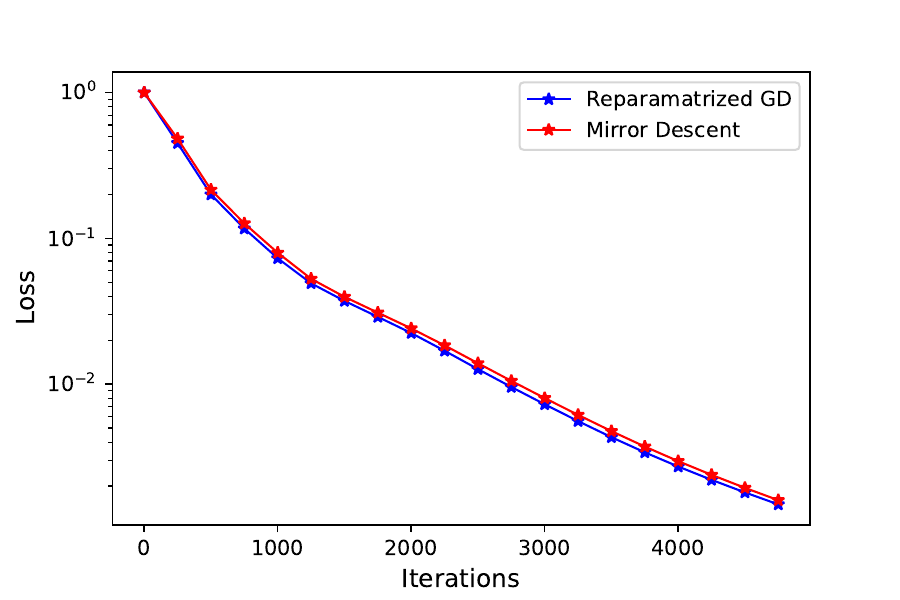}&
				\includegraphics[width=0.5\textwidth]{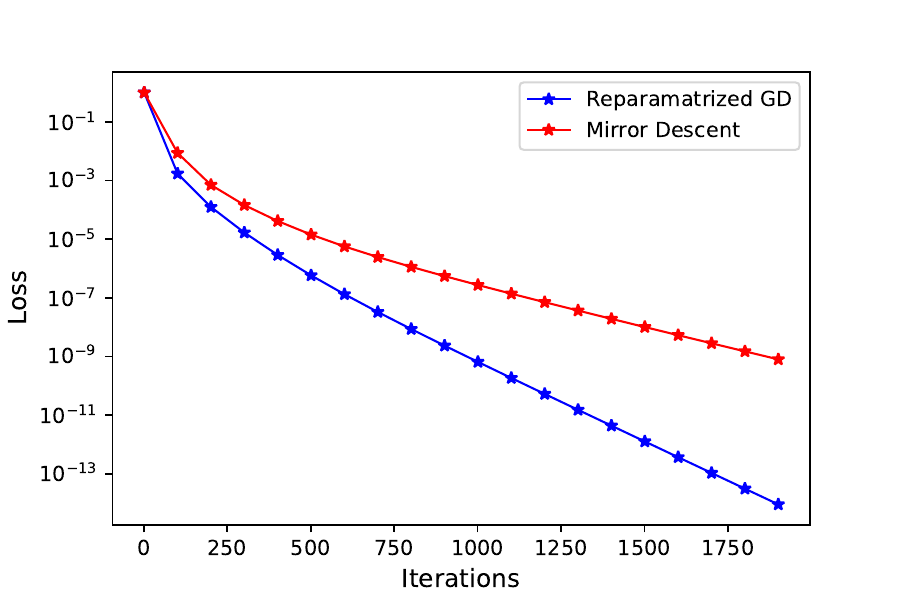}
		\end{tabular}}
		\caption{Comparison of reparametrized GD and mirror descent. Left panel: small stepsize. Right panel: large stepsize.}        
		\label{fig:compare-MD}
	\end{figure}

	Figure~\ref{fig:compare-MD} presents iteration-vs-loss plottings of Algorithm~\ref{alg1} and mirror descent under different stepsizes. The y-axis is the normalized loss $\| A x^k -b\|_2^2 / \| b\|_2^2$. 
	Both algorithms are initialized at $x^0 = 10^{-6}\cdot 1_n$. 
	In the left figure, we adopt the stepsizes $\eta_k$ suggested by Lemma~\ref{lemma: per-iter-decrease}, and set $L_k = 1/(2\eta_k)$ for the mirror descent method. This theoretical choice of $\eta_k$ is very conservative (small). As a result, in the left figure, the iterates of reparametrized GD and mirror descent are very close to each other, which is consistent with the discussions in Section~\ref{subsection: connections with mirror descent}. On the other hand, the small stepsizes cause slow progress of both algorithms -- the loss is quite large even after $5000$ iterations. In the right figure, we scale up the stepsizes for both algorithms by a factor of $30$. Under this large stepsize, the iterates of reparametrized GD and mirror descent turn out to be significantly different, and both algorithms make fast progress and reach a high accuracy quickly. 
	Reparametrized GD is slightly faster on this example. 
	Moreover, it can be seen that reparametrized GD has an asymptotic linear convergence, which verifies the conclusion of Theorem~\ref{theorem: global-linear-conv}.

	\textbf{Performance under different initializations}. 
	We explore the performance of Algorithm~\ref{alg1} under different initializations. In particular, we consider initializations of the form $u^0 = \al 1_n$, where $\al >0$ is a scaler. 
	By Theorems~\ref{theorem: flow} and \ref{theorem: limit-point-approx-solution}, under this initialization with a small value of $\al$, the limit point of the iterates of Algorithm~\ref{alg1} is an approximate solution of problem \eqref{problem-1} for $c = 1_n$. Let $x^*$ be an optimal solution of \eqref{problem-1} with $c = 1_n$. 
	Let $\hat u$ be the iterate of Algorithm~\ref{alg1} after $5000$ iterations, and let $\hat x:= \hat u \circ \hat u$. 
	We define the \textit{relative gap} as $1_n^\T (\hat x - x^*) / \max\{1, 1_n^\T x^*\}$. 
	
	\begin{figure}[h!]
		\centering
		\scalebox{0.9}{\begin{tabular}{cc}
				\includegraphics[width=0.5\textwidth]{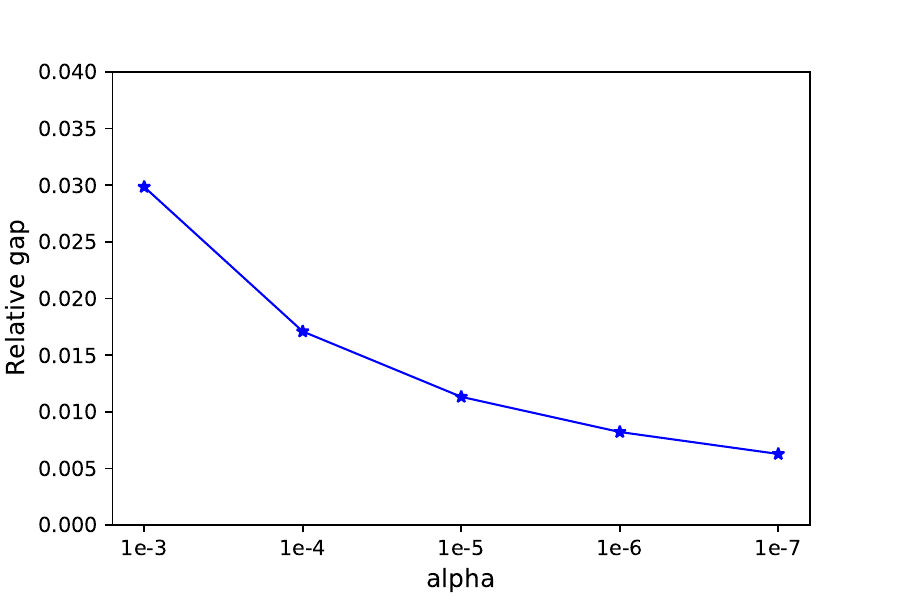}&
				\includegraphics[width=0.5\textwidth]{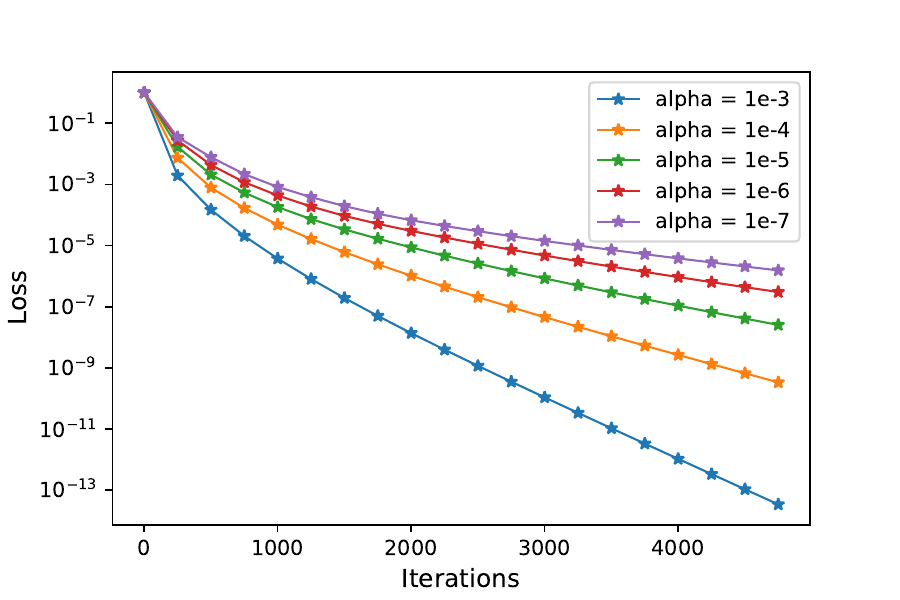}
		\end{tabular}}
		\caption{Performance of Algorithm~\ref{alg1} under different sizes of the initialization. }        
		\label{fig:initializations}
	\end{figure}

	The left panel of Figure~\ref{fig:initializations} presents the relative gap under different values of $\al \in \{10^{-3}, 10^{-4},$ $10^{-5}, 10^{-6}, 10^{-7}\}$. In particular, for smaller initializations, the limit solution has a smaller relative gap, hence is a better approximate solution of problem \eqref{problem-1}. But this higher accuracy is not without cost. 
	In the right panel, we present the loss-vs-iteration plottings of Algorithm~\ref{alg1} under these values of $\al$. For smaller values of $\al$, Algorithm~\ref{alg1} makes slower progress and can only compute a less accurate solution. 
	For instance, with $\al = 10^{-3}$, 
	the loss is below $10^{-13}$ after $5000$ iterations, but it is only roughly $10^{-5}$ if we set $\al = 10^{-7}$.

	\appendix
	
	\section{A comparison with the general convergence theory of GD for nonconvex objectives}\label{appsec: compare-with-nonconvex-GD-theory}
	
	One intuition for the global convergence of GD stems from the fact that every local minimum of $f(\cdot)$ is a global minimum of $f(\cdot)$ -- this can be seen that if $u$ is a local minimum of $f$, then $u\circ u$ is a local minimum of $g$ on $\R_+^n$, and hence a global minimum of $g$ (because $g$ is convex). 
	Moreover, it can be shown that every saddle point of $f(\cdot)$ is a strict saddle point, so intuitively, if the iterates approach any saddle point, it will be pushed away (instead of converging to this saddle). This reminds us of the general convergence theory of GD for smooth nonconvex optimization \cite{lee2016gradient}. In particular, Corollary 9 of \cite{lee2016gradient} shows that a randomly initialized GD converges to a local minimum almost surely if the objective function only has countable saddle points and every saddle point is strict. 
	Our analysis is significantly different from this result in the following aspects: First, it is possible that the number of saddle points of $f(\cdot)$ is uncountable, hence Corollary 9 of \cite{lee2016gradient} cannot be directly applied. Second, the results in \cite{lee2016gradient} hold for a random initialization, while our results hold for an arbitrary initialization $u^0 \in \R^n_{++}$. Finally, Theorem~\ref{theorem: global-linear-conv} proves a linear convergence rate, which cannot be derived from the general results in \cite{lee2016gradient}. 
	
	\section{Notations in the proofs}
	Let $\R^n_+ := \{ x \in \R^n~|~ x\ge 0 \}$ and $\R^n_{++} := \{ x \in \R^n~|~ x> 0 \} $. 
	For any $u,v \in \R^n$, let $u\circ v \in \R^n$ be the elementwise multiplication of $u$ and $v$. If all coordinates of $v$ are nonzero, let $\frac{u}{v}$ be the elementwise division of $u$ and $v$.  
	For $u \in \R^n_{++}$ and any univariate function $\varphi$, let $\varphi(u)  $ be the vector 
	$[\varphi(u_1), ..., \varphi(u_n)]$. 
	For any matrix $A$, let $\| A\|_2$ be the operator norm of $A$. 
	
	For any set $X, Y \subseteq \R^n$, denote ${\rm dist} (X,Y) := \inf_{x\in X, y\in Y} \| x-y\|_2$. If $X$ contains only a single point: $X = \{x\}$, then we write $\dist(x,Y) := \dist(\{x\},Y)$. 
	For any $\ep>0$, define 
	\begin{equation}
		B_\ep(X) := \{ y\in \R^n ~|~ {\rm dist} (y,X) < \ep \}, \quad \bar B_\ep(X) := \{ y\in \R^n ~|~ {\rm dist} (y,X) \le \ep \}. 
	\end{equation}
	If $X = \{x\}$ (a single point), then we simply write $B_\ep(x) := B_\ep(X) $ and $\bar B_\ep(x) := \bar B_\ep(X) $. 
	
	For any vector $x\in \R^n$, let $D(x) \in \R^{n\times n}$ be the diagonal matrix with the diagonal being $x$; 
	let $x_+$ be the vector whose $i$-th element is $\max\{x_i,0\}$, and let $x_-:= x_+ - x$.

	\section{Proof of Lemma~\ref{lemma: per-iter-decrease}}\label{appsec: proof of lemma per-iter decrease}
	
	Since $\na g$ is $L$-Lipschitz continuous, we have
	\begin{equation}\label{ineq1}
		g(x^{k+1}) - g(x^k) \le \la \na g(x^k), x^{k+1}-x^k \rangle + \frac{L}{2} \| x^{k+1} - x^k\|_2^2 .
	\end{equation} 
	Note that 
	\begin{equation}\label{ineq2}
		\begin{aligned}
			x^{k+1}-x^k = \ & u^{k+1}\circ u^{k+1} -u^k \circ u^k = 
			( u^{k+1}+ u^k ) \circ ( u^{k+1} - u^k ) \\
			= \ & 
			-2\eta_k ( u^{k+1}+ u^k ) \circ u^k \circ \na g(x^k) = 
			-2\eta_k D\left( (u^{k+1}+u^k) \circ u^k \right) \na g(x^k) . 
		\end{aligned}
	\end{equation}
	By \eqref{ineq1} and \eqref{ineq2}, and note that $f(u^j) = g(x^j)$, we have
	\begin{equation}\label{ineq3}
		\begin{aligned}
			f(u^{k+1}) - f(u^k) \le \ & -2\eta_k \na g(x^k)^\T D\left( (u^{k+1}+u^k) \circ u^k \right) \na g(x^k)  \\
			\ &
			+ 2L\eta_k^2
			\na g(x^k)^\T \left[D\left( (u^{k+1}+u^k) \circ u^k \right)\right]^2 \na g(x^k)	.
		\end{aligned}
	\end{equation}
	Note that $\eta_k \le \frac{1}{4 \| A^\T r^k \|_\infty}$ and $u^{k+1} = u^k \circ ( 1_n - 2\eta_k A^\T r^k )$, so we have 
	\begin{equation}
		\frac{1}{2} u^k \le u^{k+1} \le \frac{3}{2} u^k. \nonumber
	\end{equation}
	As a result, we have
	\begin{equation}\label{ineq4}
		\begin{aligned}
			& 
			2L\eta_k^2
			\na g(x^k)^\T \left[D\left( (u^{k+1}+u^k) \circ u^k \right)\right]^2 \na g(x^k)		\\
			\le \ & 2L\eta_k^2 \cdot  \frac{5}{2} \|u^k\|_\infty^2 
			\na g(x^k)^\T D\left( (u^{k+1}+u^k) \circ u^k \right) \na g(x^k)	\\
			\le \ & \eta_k \na g(x^k)^\T D\left( (u^{k+1}+u^k) \circ u^k \right) \na g(x^k), 
		\end{aligned}
	\end{equation}
	where the second inequality made use of $\eta_k \le \frac{1}{5L \| u^k\|_{\infty}^2 }$. By \eqref{ineq3} and \eqref{ineq4} we have
	\begin{equation}
		\begin{aligned}
			f(u^{k+1}) - f(u^k) \le & -\eta_k \na g(x^k)^\T D\left( (u^{k+1}+u^k) \circ u^k \right) \na g(x^k)  \\
			\le & -\eta_k \na g(x^k)^\T D\left( u^k\circ u^k \right) \na g(x^k)	= -\eta_k \| \na g(x^k) \circ u^k \|_2^2 .
		\end{aligned}
		\nonumber
	\end{equation}

	\section{Proof of Theorem~\ref{theorem: flow}}\label{appsec: proof of theorem flow}
	By the gradient flow \eqref{gradient-flow} we have 
	\begin{equation}
		u(t) = u(0) \circ \exp \left( -2A^\T \int_0^t r(s) \ ds \right)
		\nonumber
	\end{equation}
	where the $\exp(\cdot)$ is applid elementwise. Recall that $x(t):= u(t) \circ u(t)$ and $u(0) = \al $, we have
	\begin{equation}
		x(t) = (\al\circ \al) \circ \exp \left( -4A^\T \int_0^t r(s) \ ds \right)
		\nonumber
	\end{equation}
	or equivalently, 
	\begin{equation}
		\log \Big( \frac{x(t)}{ \al \circ \al }  \Big)
		=  -4 A^\T \int_0^t r(s) \ ds,
		\nonumber
	\end{equation}
	where $\log(\cdot)$ is applid elementwise. Take $t\rightarrow \infty$, and 
	let $\nu :=-4 \int_0^\infty r(s) \ ds$, we have
	\begin{equation}
		\log \Big( \frac{x^*}{ \al \circ \al }  \Big) = A^\T \nu .
		\nonumber
	\end{equation}
	Denote $G (x):=	\sum_{i=1}^n x_i \log (x_i/\al_i^2) -x_i$. Then it can be checked that $\na G(x) = \log \Big( \frac{x}{ \al \circ \al }  \Big)$, so we have $\na G (x^*) = A^\T \nu$. As a result, $x^*$ satisfies the KKT condition of the problem
	\begin{equation}
		\min_{x\in \R^n}~~ G (x)=	\sum_{i=1}^n x_i \log (x_i/\al_i^2) -x_i \quad s.t. ~ Ax=b, ~ x\ge 0. 
		\nonumber
	\end{equation}

	\section{Proof of Lemma~\ref{lemma:iterates-boundedness}}\label{appsec: proof of lemma iterates boundedness}
	For any vector $v\in \R^m$, let $\cX^*(v) := \{ x \in \R^n_+ ~|~ Ax = v \}$. 
	Given $k\ge 0$. Let $p^1,...,p^N$ be the extreme points of $\cX^*(Ax^k)$, and let 
	$q^1,...,q^M$ be the unit vectors of extreme rays of $\cX^*(Ax^k)$ (it is possible that $\cX^*(Ax^k)$ does not have extreme ray, with $M=0$). Then we have $p^s \ge 0$ for all $s\in [N]$, and $q^\ell \ge 0$ and $Aq^\ell = 0$ for all $\ell \in [M]$. Moreover, there exist $\{\lam_s\}_{s=1}^N$ and $\{\mu_\ell\}_{\ell=1}^M$ with $\lam_s \ge 0$, $\mu_\ell \ge 0 $ and $\sum_{s=1}^N \lam_s = 1$ such that 
	\begin{equation}\label{ineq--3}
		x^k = \sum_{s=1}^N \lam_s p^s + \sum_{\ell=1}^M \mu_\ell q^\ell . 
	\end{equation}
	Define $ p:= \sum_{s=1}^N \lam_s p^s  $ and $q:= \sum_{\ell=1}^M \mu_\ell q^\ell$, then we have $p\ge 0$, $q\ge 0$, $Aq=0$, and $x^k=p+q$. 
	By Lemma~\ref{lemma: log-bound} we have 
	\begin{equation}
		4 A^\T \Big( \sum_{j=0}^{k-1} \eta_j r^j \Big) \ge 
		\log \left( \frac{x^k}{x^0} \right) =
		\log \left( \frac{p+q}{x^0} \right) \ge 	\log \left( \frac{q}{x^0} \right)
		\nonumber
	\end{equation}
	where the last inequality is because $p\ge 0$. Because $q\ge 0$ and $Aq=0$, the inequality above implies
	\begin{equation}\label{ineq--1}
		q^\T \log \left( \frac{q}{x^0} \right) \le 4 q^\T A^\T \Big( \sum_{j=0}^{k-1} \eta_j r^j \Big) =0 . 
	\end{equation}
	Note that for any $i\in [n]$, by Lemma~\ref{lemma: entropy-lb} we know 
	\begin{equation}\label{ineq--2}
		q_i \log(q_i/(x^0)_i) \ge - (x^0)_i /e . 
	\end{equation}
	As a result, for each $t\in [n]$, we have 
	\begin{equation}
		q_t \log ( q_t/(x^0)_t ) \le - \sum_{  i\in [n] \setminus \{t\}} q_i \log(q_i/(x^0)_i) \le 
		\sum_{  i\in [n] \setminus \{t\}} (x^0)_i /e \le 
		1_n^\T x^0/e
		\nonumber
	\end{equation}
	where the first inequality is by \eqref{ineq--1}, and the second inequality is by \eqref{ineq--2}. Note that if $q_t \ge e (x^0)_t$, then by the inequality above we have $q_t \le 1_n^\T x^0/e$. So we know
	\begin{equation}\label{ineq--4}
		q_t \le \max \left\{ e (x^0)_t, 1_n^\T x^0/e \right\} \le e 1_n^\T x^0 \quad \forall t \in [n] . 
	\end{equation}
	On the other hand, for any $s \in [N]$, $p^s$ is an extreme point of $\cX^*(Ax^k)$, so there exists $\cI \subseteq [n]$ such that $A_\cI$ is invertible, and $p^s = (A_\cI)^{-1} (Ax^k)$. Therefore, we have 
	\begin{equation}
		\begin{aligned}
			\| p^s\|_2 \le \ &  \| (A_\cI)^{-1} \|_2 \|Ax^k\|_2		\le 
			\max_{ 
				\substack{
					\cI \subseteq [n] \\  A_{\cI} \ {\rm invertible} }} \| (A_\cI)^{-1} \|_2
			\left( \| Ax^k-b\|_2 + \|b\|_2 \right) \\
			\le &
			\max_{ 
				\substack{
					\cI \subseteq [n] \\  A_{\cI} \ {\rm invertible} }} \| (A_\cI)^{-1} \|_2
			\left( \| r^0\|_2 + \|b\|_2 \right) 
		\end{aligned}
		\nonumber
	\end{equation}
	where the third inequality made use of $\| Ax^k -b\|_2 \le \|r^0\|_2 $ by Lemma~\ref{lemma: per-iter-decrease}. As a result, 
	\begin{equation}\label{ineq--5}
		1_n^\T p^s \le \sqrt{n} 	\max_{ 
			\substack{
				\cI \subseteq [n] \\  A_{\cI} \ {\rm invertible} }} \| (A_\cI)^{-1} \|_2
		\left( \| r^0\|_2 + \|b\|_2 \right) .
	\end{equation}
	Making use of \eqref{ineq--3}, \eqref{ineq--4} and \eqref{ineq--5}, we have 
	\begin{equation}
		\begin{aligned}
			1_n^\T x^k =  \sum_{s=1}^N \lam_s (1_n^\T p^s) + 1_n^\T q \le 
			\sqrt{n} 	\max_{ 
				\substack{
					\cI \subseteq [n] \\  A_{\cI} \ {\rm invertible} }} \| (A_\cI)^{-1} \|_2
			\left( \| r^0\|_2 + \|b\|_2 \right) + en 1_n^\T x^0 .
		\end{aligned}
		\nonumber
	\end{equation}
	The proof is complete by noting that $1_n^\T x^k = \|u^k\|_2^2$ and $1_n^\T x^0 = \|u^0\|_2^2$.

	\section{Proof of Corollary~\ref{cor1}}\label{appsec: proof of cor1}
	Define 
	\begin{equation}\label{def: bar-eta}
		\bar\eta := \min_{ u\in \bar B_R(0) }  \left\{  \min \left\{  \frac{1}{4\|A^\T( A(u\circ u)-b ) \|_2}, \frac{1}{5L\|u\|_{\infty}}  \right\}   \right\} . 
	\end{equation}
	Then we know $\bar \eta >0$. 
	By Lemma~\ref{lemma:iterates-boundedness} we know $u^k \in \bar B_{R}(0)$ for all $k\ge 0$. Therefore, as long as we take $ \eta \in (0, \bar\eta]$ and set $\eta_k = \eta$ for all $k\ge 0$, then the condition \eqref{stepsize-condition-1} holds true, and the conclusion follows Lemma~\ref{lemma: per-iter-decrease}.

	\section{Proof of Theorem~\ref{theorem: global-linear-conv}}\label{appsec: proof of theorem global linear conv}

	We first need the following technical result regarding the global convergence of iterates, whose proof is relegated in Section~\ref{appsec: proof-of-app-lemma}. 
	Let $\underline \al := \min_{i\in [n]} (u^0)_i$. 
	
	\begin{lemma}\label{lemma: global-conv-detailed}
		Suppose Assumption~\ref{ass} holds true, and suppose we take $\eta_k = \eta$ for some $\eta \in (0,\bar\eta]$ and for all $k\ge 0$. 
		For any $\ep>0$, 
		there exists a constant $K = K(A,b,R,\ep)$ such that for all $k\ge  \lceil (\eta^{-1}+1) ( \log(1/\underline\al) +1 ) \rceil  K $, it holds $f(u^k) \le \ep$.
	\end{lemma}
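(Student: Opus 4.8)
The plan is to carry out a mirror-descent-style analysis with the negative entropy $H(x) := \sum_{i=1}^n x_i\log x_i$ on $\R^n_+$ as reference function, tracking the Bregman divergence $D_H(\bar x, x^k)$, where $\bar x \in \R^n_{++}$ is a fixed strictly feasible point ($A\bar x = b$), which exists by Assumption~\ref{ass}(2). The core of the argument is the one-step estimate
\begin{equation}\label{eq:plan-onestep}
	D_H(\bar x, x^{k+1}) - D_H(\bar x, x^k) \ \le\ -4\eta\, f(u^k), \qquad k\ge 0 ,
\end{equation}
which is \emph{linear} in $f(u^k)$; combined with $D_H(\bar x,\cdot)\ge 0$ and the monotonicity of $f(u^k)$ (from Lemma~\ref{lemma: per-iter-decrease}) it controls the whole trajectory.

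To establish \eqref{eq:plan-onestep} I would expand the left-hand side by the three-point identity $D_H(\bar x, x^{k+1}) - D_H(\bar x, x^k) = D_H(x^k, x^{k+1}) + \langle \na H(x^k) - \na H(x^{k+1}),\, \bar x - x^k\rangle$ and insert the multiplicative update $x^{k+1}_i = x^k_i(1-\beta^k_i)^2$ with $\beta^k_i := 2\eta (A^\T r^k)_i$. The logarithmic contributions coming from $\na H$ partly cancel against those in $D_H(x^k,x^{k+1})$, leaving a sum over $i$ of $(\beta^k_i)^2(x^k_i + 2\bar x_i) - 2\beta^k_i x^k_i - 2\bar x_i\log(1-\beta^k_i)$. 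Since the stepsize rule \eqref{stepsize-condition-1} forces $|\beta^k_i|\le \tfrac12$, the elementary bound $-\log(1-\beta^k_i)\le \beta^k_i + (\beta^k_i)^2$ applies; after substituting it, the first-order terms collapse via $A\bar x = b$ into $4\eta\langle r^k, b-Ax^k\rangle = -4\eta\|r^k\|_2^2 = -8\eta f(u^k)$, while the $O(\eta^2)$ remainder is at most $4\eta^2\|A^\T r^k\|_\infty^2\big(1_n^\T x^k + 2\cdot 1_n^\T\bar x\big) \le 4\eta^2 L B\,\|r^k\|_2^2$, where $B := R^2 + 2\cdot 1_n^\T\bar x$ and we used $1_n^\T x^k = \|u^k\|_2^2 \le R^2$ from Lemma~\ref{lemma:iterates-boundedness}. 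Shrinking $\bar\eta$ (by a constant factor depending only on $A,b,R$) so that also $\bar\eta \le (2LB)^{-1}$ lets the remainder absorb half of the first-order gain, which gives \eqref{eq:plan-onestep}.

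Telescoping \eqref{eq:plan-onestep} over $j=0,\dots,k-1$ and using $D_H(\bar x, x^k)\ge 0$ yields $4\eta\sum_{j=0}^{k-1} f(u^j) \le D_H(\bar x, x^0)$; since $f(u^j)$ is non-increasing, $\sum_{j=0}^{k-1} f(u^j)\ge k f(u^k)$, so $f(u^k)\le D_H(\bar x, x^0)/(4\eta k)$ for all $k\ge 1$. It remains to see that the initial divergence grows only like $\log(1/\underline\al)$. Using $x^0_i = (u^0)_i^2 \ge \underline\al^2$ and $1_n^\T x^0 = \|u^0\|_2^2 \le R^2$,
\[
	D_H(\bar x, x^0) = \sum_{i=1}^n\Big[\bar x_i\log\tfrac{\bar x_i}{x^0_i} - \bar x_i + x^0_i\Big] \ \le\ \sum_{i=1}^n \bar x_i\log\bar x_i + 2\big(1_n^\T\bar x\big)\log\tfrac1{\underline\al} + R^2 \ \le\ C'\big(\log\tfrac1{\underline\al} + 1\big)
\]
for a constant $C' = C'(A,b,R)$ (the dependence on $\bar x$ is a dependence on $A,b$; I take $\underline\al<1$, the case of interest, so that $\log(1/\underline\al)+1>0$ — for $\underline\al\ge1$ the middle term drops and the bound is a pure constant). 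Hence $f(u^k)\le \ep$ as soon as $k\ge C'(\log(1/\underline\al)+1)/(4\eta\ep)$, and since the right-hand side is at most $\lceil(\eta^{-1}+1)(\log(1/\underline\al)+1)\rceil\cdot K$ with $K:=\lceil C'/(4\ep)\rceil$, the lemma follows with this $K = K(A,b,R,\ep)$.

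The step I expect to be the main obstacle is the choice of Lyapunov function. Iterating the per-iteration decrease $f(u^{k+1})-f(u^k)\le -\eta\|\na g(x^k)\circ u^k\|_2^2$ of Lemma~\ref{lemma: per-iter-decrease} directly does not close: writing $\|r^k\|_2^2 = \langle A^\T r^k, x^k - \bar x\rangle$ and applying Cauchy--Schwarz only lower-bounds the decrease by a multiple of $\eta f(u^k)^2\big/\big(R^2 + \sum_i \bar x_i^2/x^k_i\big)$, and $\sum_i \bar x_i^2/x^k_i$ is genuinely unbounded along the trajectory since the non-basic coordinates of $x^k$ tend to $0$. The entropic Bregman divergence sidesteps this, trading the quadratic dependence on $f$ for a linear one and producing the $1/(\eta k)$ rate; the remaining ingredients — the elementary $\log$-inequality, the $O(\eta^2)$ error control (which is exactly where Lemma~\ref{lemma:iterates-boundedness} and the possible shrinking of $\bar\eta$ enter), and the $\log(1/\underline\al)$ bound on $D_H(\bar x, x^0)$ — are routine.
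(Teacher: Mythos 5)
Your argument is essentially correct, and it takes a genuinely different route from the paper. The paper proves Lemma~\ref{lemma: global-conv-detailed} by a compactness/case analysis (Claim~\ref{claim3}): either the iterates spend enough time in a compact region $\cF$ bounded away from the spurious stationary residuals $\cR(b)$, where $\|\na f\|_2$ is bounded below, or they linger near some $\bar r\in\cR(b)$, in which case Lemma~\ref{lemma: stationary-1} supplies a coordinate with $(A^\T\varphi(u^j))_i\le-\dt_1/2$, forcing $(u^k)_i$ to grow geometrically past $1$ (this is where the $\log(1/\underline\al)$ burn-in enters) and again making the gradient large; each scenario yields a fixed decrease $\dt_4$ of $f$ per block of iterations. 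Your proof replaces all of this with a single Lyapunov function, the entropic Bregman divergence $D_H(\bar x,x^k)$ to a strictly feasible $\bar x$, whose one-step decrease is \emph{linear} in $f(u^k)$; telescoping plus monotonicity of $f$ gives the explicit bound $f(u^k)\le D_H(\bar x,x^0)/(4\eta k)$ with $D_H(\bar x,x^0)=O(\log(1/\underline\al)+1)$. This is cleaner and quantitatively stronger (an explicit $O(1/(\eta k))$ rate rather than a bare finite-time guarantee), it uses Assumption~\ref{ass}(2) in a more transparent way, and the computation behind your one-step estimate is sound (your displayed intermediate expression has a minor bookkeeping slip --- the exact identity gives $\sum_i[-2\bar x_i\log(1-\beta^k_i)-2\beta^k_i x^k_i+(\beta^k_i)^2x^k_i]$ --- but the final remainder bound you use is the correct one).

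The one caveat is your need to ``shrink $\bar\eta$'' so that $\eta\le(2LB)^{-1}$ with $B=R^2+2\cdot 1_n^\T\bar x$: the lemma is stated for all $\eta\in(0,\bar\eta]$ with $\bar\eta$ the specific constant \eqref{def: bar-eta}, and you do not verify that $\bar\eta\le(2LB)^{-1}$, so as written your argument covers a possibly smaller stepsize range than the statement (and than the paper's proof, which works for \eqref{def: bar-eta} as is). In the context of the paper this is benign, since $\bar\eta$ is only ever invoked as ``some positive constant depending on $(A,b,R)$ for which Lemma~\ref{lemma: per-iter-decrease} applies,'' and all downstream results would survive replacing it by $\min\{\bar\eta,(2LB)^{-1}\}$; moreover the gap is likely repairable within your approach, e.g.\ by absorbing the $\sum_i x^k_i(\beta^k_i)^2$ part of the remainder via the per-iteration decrease (it equals $\eta^2\|\na f(u^k)\|_2^2\le 2\eta(f(u^k)-f(u^{k+1}))$, which telescopes) and by choosing $\bar x$ strictly feasible but close to a minimal extreme point so that only a small $\bar x$-weighted second-order term remains --- but as submitted this constant-matching step is missing.
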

	
	Our proof of Theorem~\ref{theorem: global-linear-conv} adopts a two-step argument. We first prove a local sub-linear rate (Theorem~\ref{theorem: local convergence}) using the error bound condition of polynomials. Then using this local convergence rate, we show that the iterates $u^k$ are bounded away from the boundary of $\R^n_+$ (Lemma~\ref{lemma: iteration-lb}). Finally, the proof of Theorem~\ref{theorem: global-linear-conv} is complete by a simple application of Lemma~\ref{lemma: iteration-lb}. 
	
	\begin{theorem}\label{theorem: local convergence}
		(Local convergence: sublinear rate)
		Suppose Assumptions~\ref{ass} (1) is satisfied, and suppose we take $\eta_k = \eta$ for some $\eta \in (0,\bar\eta]$ and for all $k\ge 0$. 
		Let $\dt = \dt(A,b,R)$ and $\tau = \tau(A,b,R)$ be the constants given by Lemma~\ref{lemma: lojasiewicz}. 
		Let $k_0>0$ be an iteration such that $f(u^{k_0}) \le \dt$. 
		Then it holds 
		\begin{equation}\label{sublinear-rate}
			f(u^k) \le 
			\left( \dt^{-(1-\tau)}  + \frac{\eta c (1-\tau)}{2} (k-k_0)    \right)^{-\frac{1}{1-\tau}}
			\quad \forall  ~ k\ge k_0+1 . 
		\end{equation}
	\end{theorem}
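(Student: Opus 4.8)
The plan is to reduce Theorem~\ref{theorem: local convergence} to a one-step recursion of the form $f(u^{k+1}) \le f(u^k) - \tfrac{\eta c}{2}\, f(u^k)^{\,2-\tau}$ valid for all $k \ge k_0$, obtained by feeding the \L{}ojasiewicz-type gradient inequality of Lemma~\ref{lemma: lojasiewicz} into the per-iteration decrease of Lemma~\ref{lemma: per-iter-decrease}, and then to integrate this recursion by an elementary convexity/telescoping argument. Note only Assumption~\ref{ass}(1) enters, since that is all that Lemmas~\ref{lemma:iterates-boundedness} and \ref{lemma: lojasiewicz} need.

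First I would fix the standing hypotheses along the entire tail of the trajectory. Because $\eta \le \bar\eta$, Corollary~\ref{cor1} guarantees that the stepsize condition \eqref{stepsize-condition-1} holds at every iteration, so Lemma~\ref{lemma:iterates-boundedness} gives $u^k \in \bar B_R(0)$ for all $k$, and Lemma~\ref{lemma: per-iter-decrease} gives $f(u^{k+1}) - f(u^k) \le -\eta\,\|\na g(x^k)\circ u^k\|_2^2 \le 0$; in particular $k \mapsto f(u^k)$ is nonincreasing. Hence $f(u^{k_0}) \le \dt$ forces $f(u^k) \le \dt$ for all $k \ge k_0$, so the hypotheses of Lemma~\ref{lemma: lojasiewicz} (a norm bound $\|u\|_2 \le R$ together with $f(u) \le \dt$) hold uniformly on the tail $\{u^k\}_{k\ge k_0}$, yielding $\|\na g(x^k)\circ u^k\|_2^2 \ge \tfrac{c}{2}\, f(u^k)^{\,2-\tau}$. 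If $f(u^k)$ ever equals $0$ it remains $0$ thereafter and the claimed bound is trivial, so we may assume $a_k := f(u^k) > 0$ for all relevant $k$, and we arrive at $a_{k+1} \le a_k - \tfrac{\eta c}{2}\, a_k^{\,2-\tau}$ with $2-\tau > 1$.

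To integrate this recursion I would set $q := 1-\tau \in (0,1)$ and use the strictly convex, decreasing map $h(s) := s^{-q}$ on $(0,\infty)$. Convexity gives $h(a_{k+1}) - h(a_k) \ge h'(a_k)(a_{k+1} - a_k) = q\,a_k^{-q-1}(a_k - a_{k+1}) \ge q\,a_k^{-q-1}\cdot \tfrac{\eta c}{2}\,a_k^{\,q+1} = \tfrac{\eta c\, q}{2}$, where we used $a_k - a_{k+1} \ge \tfrac{\eta c}{2}\,a_k^{\,2-\tau} = \tfrac{\eta c}{2}\,a_k^{\,q+1}$. Summing over $k_0, \dots, k-1$ gives $a_k^{-q} \ge a_{k_0}^{-q} + \tfrac{\eta c\, q}{2}(k - k_0) \ge \dt^{-q} + \tfrac{\eta c\, q}{2}(k - k_0)$, where the last step uses $a_{k_0} \le \dt$ together with $-q < 0$. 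Since $s \mapsto s^{-q}$ is decreasing, raising both sides to the power $-1/q = -\tfrac{1}{1-\tau}$ reverses the inequality and produces exactly \eqref{sublinear-rate}.

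I do not expect a substantive obstacle: this is the standard Kurdyka--\L{}ojasiewicz / error-bound recursion argument, and the genuinely hard content is packaged in Lemma~\ref{lemma: lojasiewicz} (the \L{}ojasiewicz exponent of the quartic $u \mapsto \tfrac12\|A(u\circ u) - b\|_2^2$ over a bounded set), which we are permitted to assume. The two points that require a little care are: (i) confirming that the iterates stay inside the region where Lemma~\ref{lemma: lojasiewicz} is valid for \emph{all} $k \ge k_0$ --- which is precisely where the boundedness of Lemma~\ref{lemma:iterates-boundedness} and the monotonicity of $f(u^k)$ are used; and (ii) bookkeeping the constants so that the \L{}ojasiewicz exponent $2-\tau$ produces precisely the term $\dt^{-(1-\tau)}$ and the coefficient $\tfrac{\eta c(1-\tau)}{2}$ in \eqref{sublinear-rate}, which is exactly what the $s^{-(1-\tau)}$-telescoping delivers.
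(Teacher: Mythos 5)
Your proposal is correct and follows essentially the same route as the paper: combine the per-iteration decrease (via Corollary~\ref{cor1}) with the \L{}ojasiewicz inequality of Lemma~\ref{lemma: lojasiewicz} to get $f(u^{k+1}) \le f(u^k) - \tfrac{\eta c}{2} f(u^k)^{2-\tau}$, then telescope; your convexity argument with $s\mapsto s^{-(1-\tau)}$ is exactly the content of the paper's Lemma~\ref{lemma: decreasing-rate}, which you simply inline. Your explicit check that monotonicity of $f(u^k)$ and the bound $u^k\in\bar B_R(0)$ keep the iterates in the region where Lemma~\ref{lemma: lojasiewicz} applies is a point the paper leaves implicit, but the argument is the same.
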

	
	\begin{proof}
		By Corollary~\ref{cor1} and Lemma~\ref{lemma: lojasiewicz} we have
		\begin{equation}
			f(u^k) - f(u^{k+1}) \ge  \frac{\eta }{2}  \| \na f(u^k) \|_2^2  \ge  \frac{\eta c}{2} f(u^k)^{2-\tau} . 
		\end{equation}
		The conclusion~\eqref{sublinear-rate} can be derived by using Lemma~\ref{lemma: decreasing-rate} with $a_k = f(u^k)$, $K' = k_0$, $\tau' = \tau$ and $c' = \tau c /2$. 
	\end{proof}
	
	\begin{corollary}\label{cor: sum-bound}
		Under the assumptions of Theorem~\ref{theorem: local convergence}, it holds
		\begin{equation}
			\sum_{k=k_0+1}^\infty \| r^k \|_2^2 =	2 \sum_{k={k_0+1}}^{\infty} f(u^k) \le  \frac{4}{\eta c \tau} \dt^{\tau} . 
		\end{equation}
	\end{corollary}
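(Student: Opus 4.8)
The statement is a direct consequence of the sublinear decay rate already established in Theorem~\ref{theorem: local convergence}, so the plan is short. First I would record the elementary identity $\|r^k\|_2^2 = \|A x^k - b\|_2^2 = 2 f(u^k)$, which holds since $f(u^k) = g(x^k) = \tfrac12\|Ax^k-b\|_2^2$; this already yields the claimed equality $\sum_{k\ge k_0+1}\|r^k\|_2^2 = 2\sum_{k\ge k_0+1} f(u^k)$, and it remains only to bound the tail sum $\sum_{k\ge k_0+1} f(u^k)$.

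Next I would plug in the bound \eqref{sublinear-rate}. Writing $\beta := 1-\tau \in (0,1)$ and reindexing by $j = k-k_0 \ge 1$,
\begin{equation}
\sum_{k=k_0+1}^{\infty} f(u^k) \ \le\ \sum_{j=1}^{\infty}\Big( \dt^{-\beta} + \tfrac{\eta c \beta}{2}\, j \Big)^{-1/\beta}.
\end{equation}
Since $t\mapsto \big(\dt^{-\beta} + \tfrac{\eta c\beta}{2} t\big)^{-1/\beta}$ is nonincreasing on $[0,\infty)$, each summand is at most the integral of the same function over $[j-1,j]$, so the sum is at most $\int_0^\infty \big(\dt^{-\beta} + \tfrac{\eta c\beta}{2} t\big)^{-1/\beta}\,dt$. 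I would then evaluate this integral by the substitution $s = \dt^{-\beta} + \tfrac{\eta c\beta}{2} t$ — legitimate because $1/\beta > 1$, so the antiderivative $s^{1-1/\beta}$ vanishes at $s=\infty$ — obtaining $\tfrac{2}{\eta c\beta}\cdot\tfrac{\beta}{1-\beta}\,(\dt^{-\beta})^{1-1/\beta} = \tfrac{2}{\eta c(1-\beta)}\,\dt^{\tau} = \tfrac{2}{\eta c\tau}\,\dt^{\tau}$, where I used $(\dt^{-\beta})^{1-1/\beta} = \dt^{\,1-\beta} = \dt^{\tau}$. Multiplying by $2$ then gives $\sum_{k\ge k_0+1}\|r^k\|_2^2 \le \tfrac{4}{\eta c\tau}\dt^{\tau}$, as claimed.

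The whole argument is routine; the only points worth checking explicitly are that $\tau\in(0,1)$ so that the exponent $1/\beta$ strictly exceeds $1$ — needed both for the $s\to\infty$ limit and for the series to converge at all — which is part of the content of Lemma~\ref{lemma: lojasiewicz}, and the monotonicity used in the sum-to-integral comparison. I do not expect any genuine obstacle here.
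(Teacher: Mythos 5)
Your proposal is correct and follows essentially the same route as the paper: bound the tail sum via the sublinear rate \eqref{sublinear-rate}, compare the sum with the integral of the same decreasing function, and evaluate that integral (using $\tau\in(0,1)$ so the exponent exceeds $1$) to obtain $\frac{2}{\eta c\tau}\dt^{\tau}$, with the factor $2$ coming from $\|r^k\|_2^2 = 2f(u^k)$. The only difference is cosmetic — you make the identity $\|r^k\|_2^2 = 2f(u^k)$ and the convergence/monotonicity checks explicit, which the paper leaves implicit.
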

	
	\begin{proof}
		By \eqref{sublinear-rate} we have 
		\begin{equation}
			\begin{aligned}
				\sum_{k=k_0+1}^\infty f(u^k) \le \ &  \sum_{k=k_0+1}^{\infty}	\left( \dt^{-(1-\tau)}  + \frac{\eta c (1-\tau)}{2} (k-k_0)    \right)^{-\frac{1}{1-\tau}} \\ 
				\le \ & 
				\int_0^\infty 	\left( \dt^{-(1-\tau)}  + \frac{\eta c (1-\tau)}{2} s    \right)^{-\frac{1}{1-\tau}} \ ds  \\ 
				= \ & \frac{2}{\eta c (1-\tau)} \int_0^\infty 
				\left( \dt^{-(1-\tau)}  + w    \right)^{-\frac{1}{1-\tau}} \ dw  \ = \ 
				\frac{2}{\eta c \tau} \dt^{\tau} . 
			\end{aligned}
			\nonumber
		\end{equation}
	\end{proof}

	\begin{lemma}\label{lemma: iteration-lb}
		Suppose Assumption~\ref{ass} holds true, and suppose we take $\eta_k = \eta$ for some $\eta \in (0,\bar\eta]$ and for all $k\ge 0$. 
		Then there exists a constant $\sigma = \sigma(A,b,u^0, \eta) >0$ such that $u^k \ge \sigma 1_n$ for all $k \ge 0$. 
	\end{lemma}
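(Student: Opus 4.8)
The plan is to produce a Lyapunov-type quantity that stays bounded along the iteration and whose boundedness pins each coordinate of $x^k := u^k\circ u^k$ away from $0$. Fix once and for all a strictly feasible point $\bar x\in\R^n_{++}$ with $A\bar x=b$ (available by Assumption~\ref{ass}~(2)), set $H(x):=\sum_{i=1}^n x_i\log x_i$, and track the Bregman divergence
\[
D_H(\bar x, x^k)=\sum_{i=1}^n\Big(\bar x_i\log\frac{\bar x_i}{x^k_i}-\bar x_i+x^k_i\Big),
\]
which is finite since $x^k>0$. This is precisely the Bregman divergence of the entropic objective appearing in Theorem~\ref{theorem: flow}; it is an exact Lyapunov function for the corresponding mirror flow, and for the discrete iteration I expect it to decrease up to a summable error. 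Once I have $\sup_{k\ge0}D_H(\bar x,x^k)\le B<\infty$, the conclusion follows at once: each summand $\bar x_i\log(\bar x_i/x^k_i)-\bar x_i+x^k_i$ is nonnegative, so $\bar x_i\log(\bar x_i/x^k_i)\le B+\bar x_i$ for every $i$, hence $x^k_i\ge\bar x_i\exp(-1-B/\bar x_i)$ and therefore $u^k\ge\sigma 1_n$ with $\sigma^2:=\min_{i\in[n]}\bar x_i\exp(-1-B/\bar x_i)>0$.

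The heart of the proof is the one-step estimate for $D_H(\bar x,\cdot)$. Writing $s^k:=2\eta A^\T r^k$ and using $x^{k+1}=x^k\circ(1_n-s^k)\circ(1_n-s^k)$, a direct expansion gives
\[
D_H(\bar x,x^{k+1})-D_H(\bar x,x^k)=\sum_{i=1}^n\Big(-2\bar x_i\log(1-s^k_i)+x^k_i\big((1-s^k_i)^2-1\big)\Big).
\]
By Corollary~\ref{cor1} the stepsize obeys \eqref{stepsize-condition-1}, so $\|s^k\|_\infty\le\frac12$, and I would apply the elementary inequality $-\log(1-s)\le s+s^2$ valid for $|s|\le\frac12$. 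After regrouping, the increment is at most $2\sum_i s^k_i(\bar x_i-x^k_i)+\sum_i(s^k_i)^2(2\bar x_i+x^k_i)$. The first sum equals $4\eta\,(r^k)^\T A(\bar x-x^k)=-4\eta\|r^k\|_2^2\le0$, because $A(\bar x-x^k)=b-Ax^k=-r^k$; the second is at most $(2\|\bar x\|_\infty+\|x^k\|_\infty)\|s^k\|_2^2\le 4\eta^2L(2\|\bar x\|_\infty+R^2)\|r^k\|_2^2$, using $\|x^k\|_\infty\le\|u^k\|_2^2\le R^2$ from Lemma~\ref{lemma:iterates-boundedness}. Hence $D_H(\bar x,x^{k+1})-D_H(\bar x,x^k)\le\kappa\|r^k\|_2^2$ for a constant $\kappa=\kappa(A,b,u^0,\eta)$, and telescoping from $0$ to $k-1$ gives $D_H(\bar x,x^k)\le D_H(\bar x,x^0)+\kappa\sum_{j\ge0}\|r^j\|_2^2$.

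It then remains to use summability of the residuals: $\sum_{j\ge0}\|r^j\|_2^2=2\sum_{j\ge0}f(u^j)<\infty$. This follows by combining Lemma~\ref{lemma: global-conv-detailed} (so that $f(u^{k_0})\le\dt$ for some finite $k_0$) with Corollary~\ref{cor: sum-bound} (which bounds $\sum_{j>k_0}f(u^j)$) and the monotonicity of $f$ on $\{0,\dots,k_0\}$. Plugging this in produces a finite bound $B=B(A,b,u^0,\eta)$ on $D_H(\bar x,x^k)$, uniform in $k$, and hence the constant $\sigma=\sigma(A,b,u^0,\eta)$ of the statement. The only genuinely delicate point is conceptual: recognizing that the right potential is the Bregman divergence of the \emph{entropic regularizer} relative to a fixed strictly feasible point — not the loss $f$ itself, whose level sets may be unbounded and which carries no information about the boundary of $\R^n_+$. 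With that in hand, the remaining work is the routine expansion above, the only bookkeeping being to keep $\|2\eta A^\T r^k\|_\infty\le\frac12$ so that the logarithmic inequality applies, which is already built into the stepsize rule \eqref{stepsize-condition-1}.
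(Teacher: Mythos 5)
Your proof is correct, and it takes a genuinely different route from the paper's. The paper works with the cumulative quantities $v^k=4\eta\sum_{j<k}r^j$ and $w^k=16\eta^2\sum_{j<k}(A^\T r^j)\circ(A^\T r^j)$ via the integrated log-bound of Lemma~\ref{lemma: log-bound}, then pairs $\log(x^k/x^0)$ with the positive and negative parts of $\td x-x^k$, and finishes with the entropy lower bound (Lemma~\ref{lemma: entropy-lb}) and a Jensen step to extract a coordinatewise bound $x^k_i\ge(x^0)_i\exp(-C_2/\td x_i)$. You instead run a one-step Lyapunov analysis of the Bregman divergence $D_H(\bar x,x^k)$ relative to a strictly feasible $\bar x$: your expansion is exact, the first-order term is exactly $-4\eta\|r^k\|_2^2\le 0$ thanks to $A(\bar x-x^k)=-r^k$, and only the second-order term $(s^k_i)^2(2\bar x_i+x^k_i)$ needs to be controlled, which you do with $\|x^k\|_\infty\le R^2$ (Lemma~\ref{lemma:iterates-boundedness}) and the summability of $\sum_j\|r^j\|_2^2$ (Lemma~\ref{lemma: global-conv-detailed} plus Corollary~\ref{cor: sum-bound}, the same external ingredients the paper uses to make its constant $C_1$ finite, so there is no circularity). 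The elementary inequality $-\log(1-s)\le s+s^2$ is valid on $|s|\le\tfrac12$, which the stepsize rule \eqref{stepsize-condition-1} guarantees, and the final deduction $x^k_i\ge\bar x_i\exp(-1-B/\bar x_i)$ from coordinatewise nonnegativity of the Bregman summands is sound; the resulting $\sigma(A,b,u^0,\eta)$ serves the later linear-rate argument just as well. What your approach buys is economy and transparency: it dispenses with Lemma~\ref{lemma: log-bound}, the $(\cdot)_+/(\cdot)_-$ splitting, Lemma~\ref{lemma: entropy-lb} and the Jensen step, and it exploits the exact cancellation in the first-order term rather than merely bounding cross terms; the paper's version, in exchange, gives a lower bound expressed directly in terms of the initialization $x^0$, which makes the dependence on $\underline\al$ more visible, though that dependence is not needed for the statement itself.
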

	
	\begin{proof}
		Given any $k\ge 1$, define $v^k := 4\eta \sum_{j=0}^{k-1} r^j $ and $w^k := 16\eta^2 \sum_{j=0}^{k-1} (A^\T r^j) \circ (A^\T r^j)$. By Lemma~\ref{lemma: log-bound} we have 
		\begin{equation}
			A^\T v^k - w^k \le 2\log \Big( \frac{u^k}{u^0}  \Big) \le A^\T v^k .
			\nonumber
		\end{equation}
		Recall that $x^k = u^k \circ u^k$ and $x^0 = u^0 \circ u^0$, so we have
		\begin{equation}\label{aa1}
			A^\T v^k - w^k \le \log \Big( \frac{x^k}{x^0}  \Big) \le A^\T v^k .
		\end{equation}
		By Assumption~\ref{ass} (2), there exists a strict feasible point $\td x \in \R^n_{++}$ such that $A\td x=b$. By \eqref{aa1} we have
		\begin{equation}\label{aa2}
			(\td x - x^k)_+^\T \log \Big( \frac{x^k}{x^0}  \Big) \ge 	(\td x - x^k)_+^\T  ( A^\T v^k - w^k ),
		\end{equation}
		and
		\begin{equation}\label{aa3}
			-	(\td x - x^k)_-^\T \log \Big( \frac{x^k}{x^0}  \Big) \ge 	-	(\td x - x^k)_-^\T A^\T v^k .
		\end{equation}
		Combining \eqref{aa2} and \eqref{aa3} we have
		\begin{equation}\label{aa4}
			\begin{aligned}
				(\td x - x^k)^\T \log \Big( \frac{x^k}{x^0}  \Big) \ge \ &
				(\td x - x^k)^\T A^\T v^k
				- 	(\td x - x^k)_+^\T  w^k		\\
				= \ & - (  r^k )^\T v^k -  (\td x - x^k)_+^\T  w^k
			\end{aligned}
		\end{equation}
		where the equality made use of $A \td x = b $ and $r^k = Ax^k - b$. 
		Note that
		\begin{equation}\label{aa5}
			| (r^k)^\T v^k | = 4 \eta \Big|  (r^k)^\T \sum_{j=0}^{k-1} r^j  \Big|  \le 
			4\eta \| r^k\|_2 \sum_{j=0}^{k-1} \| r^j\|_2 \le 
			4\eta  \sum_{j=0}^{k-1} \| r^j\|_2^2 \le 4\eta \sum_{j=0}^{\infty} \| r^j\|_2^2
		\end{equation}
		where the second inequality made use of the fact that $\|r^j\|_2$ is decreasing (by Lemma~\ref{lemma: per-iter-decrease}), 
		and
		\begin{equation}\label{aa6}
			\begin{aligned}
				\Big|  (\td x - x^k)_+^\T w^k  \Big| \le \  &
				\| \td x - x^k \|_\infty \| w^k\|_1 \le 16 \eta^2 \| \td x - x^k \|_2 \sum_{j=0}^{k-1} \| A^\T r^j \|_2^2 \\
				\le \ &
				16 L \eta^2 \Big( \| \td x\|_2 + R  \Big) \sum_{j=0}^{k-1} \|  r^j \|_2^2 \le 		
				16 L \eta^2 \Big( \| \td x\|_2 + R  \Big) \sum_{j=0}^{\infty} \|  r^j \|_2^2 .
			\end{aligned}
		\end{equation}
		Let $C_1 := 4\eta \sum_{j=0}^{\infty} \| r^j\|_2^2 + 16 L \eta^2 ( \| \td x\|_2 + R  ) \sum_{j=0}^{\infty} \|  r^j \|_2^2 $. Then by Corollary~\ref{cor: sum-bound} we know $C_1$ is a finite constant that only depends on $A$, $b$, $u^0$ and $\eta$. 
		As a result of \eqref{aa4}, \eqref{aa5}, \eqref{aa6} and the definition of $C_1$, we have 
		\begin{equation}
			(\td x - x^k)^\T \log \Big( \frac{x^k}{x^0}  \Big) \ge - C_1 .
			\nonumber
		\end{equation}
		This implies
		\begin{equation}\label{pre1}
			\td x^\T \log \Big( \frac{x^k}{x^0}  \Big) \ge (x^k)^\T \log \Big( \frac{x^k}{x^0}  \Big) - C_1 \ge 
			-\frac{n\|x^0\|_1}{e} - C_1
		\end{equation}
		where the second inequality made use of Lemma~\ref{lemma: entropy-lb}. 
		Recall that $\underline\al^2 = \min_{i\in [n]} (x^0)_i$. 
		Note that for any $i\in [n]$, 
		\begin{equation}\label{pre2}
			\begin{aligned}
				\td x^\T \log \Big( \frac{x^k}{x^0}  \Big) = \ & 	\td x_i 	 \log \Big( \frac{(x^k)_i}{(x^0)_i}  \Big) + \sum_{j\in [n] \setminus \{i\}} 	\td x_j 	 \log \Big( \frac{(x^k)_j}{(x^0)_j}  \Big) \\ 
				\le \ & 
				\td x_i 	 \log \Big( \frac{(x^k)_i}{(x^0)_i}  \Big) + \sum_{j\in [n] \setminus \{i\}} 	\td x_j 	 \log \Big( \frac{(x^k)_j}{\underline \al^2}  \Big) 
				\\ 
				\le \ & 
				\td x_i 	 \log \Big( \frac{(x^k)_i}{(x^0)_i}  \Big) + \| \td x \|_\infty \sum_{j\in J}  \log \Big( \frac{(x^k)_j}{\underline \al^2}  \Big)
			\end{aligned}
		\end{equation}
		where $J := \{ j\in [n] \setminus \{i\} ~|~ (x^k)_j > \underline \al^2 \}$. We assume $J$ is not empty, otherwise, the second term in the RHS of \eqref{pre2} is $0$.  
		By Jensen's inequality, we have
		\begin{equation}\label{pre2.5}
			\begin{aligned}
				\td x^\T \log \Big( \frac{x^k}{x^0}  \Big) \le \ & 	\td x_i 	 \log \Big( \frac{(x^k)_i}{(x^0)_i}  \Big) +  \| \td x \|_\infty  |J|
				\log \Big( \frac{1}{|J|}  \sum_{j\in J} \frac{(x^k)_j}{\underline \al^2}  \Big) \\
				\le \ & 
				\td x_i 	 \log \Big( \frac{(x^k)_i}{(x^0)_i}  \Big) + n \| \td x \|_\infty   \log \Big(    \frac{R^2}{ \underline \al^2}  \Big)
			\end{aligned}
		\end{equation}
		where the second inequality is because $n\ge |J|\ge 1$ and $ \sum_{j\in J} {(x^k)_j} \le \| x^k\|_1 = \|u^k\|_2^2 \le R^2$ by Lemma~\ref{lemma:iterates-boundedness}. As a result of \eqref{pre1} and \eqref{pre2.5}, we have
		\begin{equation}\label{pre3}
			\begin{aligned}
				\td x_i 	 \log \Big( \frac{(x^k)_i}{(x^0)_i}  \Big)  \ge \ &
				\td x^\T \log \Big( \frac{x^k}{x^0}  \Big) - n \| \td x \|_\infty   \log \Big(    \frac{R^2}{ \underline \al^2}  \Big) \\
				\ge \ & -\frac{n\|x^0\|_1}{e} - C_1- n \| \td x \|_\infty   \log \Big(    \frac{R^2}{ \underline \al^2}  \Big) .
			\end{aligned}
		\end{equation} 
		Let $C_2$ be the RHS of \eqref{pre3}, then we have
		\begin{equation}
			(x^k)_i \ge (x^0)_i \exp \left( - \frac{C_2}{\td x_i}   \right) \quad \forall i\in [n] . 
			\nonumber
		\end{equation}
		The proof is complete by defining 
		$$
		\sigma := \sqrt{ \min_{i\in [n]}  \left\{ (x^0)_i \exp \left( - \frac{C_2}{\td x_i}   \right) \right\} }.
		$$
		
	\end{proof}

	\subsubsection*{Completing the proof of Theorem~\ref{theorem: global-linear-conv}}
	By Corollary~\ref{cor1} we have 
	\begin{equation}
		\begin{aligned}
			f(u^k) - f(u^{k+1}) \ge & 
			\eta \| u^k \circ \na g(x^k) \|_2^2 \ge \eta\sigma^2 \| A^\T ( Ax^k - b ) \|_2^2\\
			\ge &
			\mu \eta\sigma^2 \| Ax^k - b  \|_2^2 = 	2 \mu \eta\sigma^2  f(u^k)
		\end{aligned}
		\nonumber
	\end{equation}
	for any $k\ge 0$, where $\mu$ is the smallest eigenvalue of $AA^\T $, which satisfies $\mu >0$ because of Assumption~\ref{ass} (1). 
	The proof is complete by defining $\rho := 2 \mu \eta\sigma^2 $.

	\section{Proof of Lemma~\ref{lemma: global-conv-detailed}}\label{appsec: proof-of-app-lemma}

	Below we first set up some notations and some technical results in Section~\ref{appsec: notation-1}. Then the proof of Lemma~\ref{lemma: global-conv-detailed} is presented in Section~\ref{appsec: completing}. 
	
	\subsection{Notations and technical results}\label{appsec: notation-1}
	
	Denote $\cX^*:= \{ x\in \R^n_+ ~|~ Ax =b\} $ and
	$\cU^* := \{ u\in \R^n_+ ~|~ A(u\circ u) =b\}$.
	Define the mapping $\varphi(u):= A(u\circ u) -b$. 
	Let $\cS$ be the set of stationary points of $f$ in $\R^n_+$, i.e., 
	$\cS:= \{ u \in \R^n_+ ~|~ \na f(u) =0 \}$. 
	For any $\cI \in [n]$, let $P_\cI(b)$ be the projection of $b$ onto the set $\{Ax~|~ x\ge 0, x_i=0~\forall i\not\in \cI\}$. Define 
	\begin{equation}
		\cR(b) := \left\{ P_{\cI}(b)-b ~|~ \cI\subseteq[n], ~ P_\cI(b)\neq b \right\} .
		\nonumber
	\end{equation}
	In particular, note that $\cR(b)$ is a finite set. 
	\begin{lemma}\label{lemma: stationary-1}
		For any $\bar r \in \cR(b)$, there exists $i\in [n]$ such that $(A^\T \bar r)_i<0$. 
	\end{lemma}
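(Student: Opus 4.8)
The plan is to unwind the definition of $\cR(b)$ and combine the first-order geometry of Euclidean projection onto a (finitely generated, hence closed) convex cone with the feasibility guaranteed by Assumption~\ref{ass}(2).

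First I would fix $\bar r \in \cR(b)$ and write $\bar r = P_\cI(b) - b$ for some $\cI \subseteq [n]$ with $P_\cI(b) \ne b$. Set $y^* := P_\cI(b)$, the Euclidean projection of $b$ onto the set $C_\cI := \{Ax \mid x \ge 0,\ x_i = 0\ \forall i \notin \cI\}$. This $C_\cI$ is the conic hull of the finitely many columns $\{Ae_i\}_{i\in\cI}$, hence a closed convex cone (Minkowski--Weyl), so $y^*$ is well defined and satisfies the variational inequality $\la b - y^*,\, y - y^*\ra \le 0$ for all $y \in C_\cI$. Evaluating this at the two admissible points $y = 0$ and $y = 2y^*$ (both in $C_\cI$ because it is a cone) forces $\la b - y^*,\, y^*\ra = 0$. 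Using $\bar r = -(b - y^*)$ and decomposing $b = (b - y^*) + y^*$, I then get
\begin{equation}
	\la \bar r,\, b\ra \;=\; -\la b - y^*,\, b\ra \;=\; -\|b - y^*\|_2^2 - \la b - y^*,\, y^*\ra \;=\; -\|b - y^*\|_2^2 \;<\; 0,
	\nonumber
\end{equation}
where strictness is exactly the hypothesis $P_\cI(b) \ne b$.

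Next I would invoke feasibility: by Assumption~\ref{ass}(2) there is $\td x \in \R^n_{++}$ (in particular $\td x \ge 0$) with $A\td x = b$, so $\la \bar r,\, b\ra = \la \bar r,\, A\td x\ra = \la A^\T \bar r,\, \td x\ra$. If it were the case that $(A^\T \bar r)_i \ge 0$ for every $i \in [n]$, then $\la A^\T \bar r,\, \td x\ra = \sum_{i} (A^\T \bar r)_i\, \td x_i \ge 0$, contradicting $\la \bar r, b\ra < 0$ established above. Hence some coordinate $(A^\T \bar r)_i$ is strictly negative, which is the claim.

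I do not anticipate a real obstacle here; the proof is short. The only points meriting a line of care are recording that $C_\cI$ is closed (so that the projection and its characterizing inequality are legitimate) and observing that the degenerate case $\cI = \emptyset$, where $C_\cI = \{0\}$ and $y^* = 0$, is handled verbatim by the same computation. Both are routine, so the bulk of the write-up is just the two-line projection identity and the one-line contradiction.
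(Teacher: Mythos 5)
Your proof is correct, and it takes a somewhat different route from the paper's. The paper argues by contradiction at the level of the nonnegative least-squares problem: assuming $A^\T \bar r \ge 0$, it notes that any $\bar x \ge 0$ supported on $\cI$ with $A\bar x = P_\cI(b)$ satisfies the optimality conditions of $\min_{x\ge 0}\|Ax-b\|_2^2$, and since Assumption~\ref{ass}(2) makes that optimal value zero, it concludes $\bar r = A\bar x - b = 0$, contradicting $P_\cI(b)\neq b$. You instead bypass the optimality claim entirely: you use the cone-projection orthogonality $\la b - P_\cI(b), P_\cI(b)\ra = 0$ to compute $\la \bar r, b\ra = -\|b - P_\cI(b)\|_2^2 < 0$, and then contradict this by pairing with a feasible point, $\la \bar r, b\ra = \la A^\T\bar r, \td x\ra \ge 0$ if $A^\T\bar r \ge 0$. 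The two arguments are close in substance (both rest on the projection onto the cone $C_\cI$ plus feasibility), but yours has the virtue of making explicit the complementarity identity that the paper's appeal to KKT sufficiency leaves implicit, and of exhibiting directly the strictly negative quantity $-\|\bar r\|_2^2$ rather than routing through the value of an auxiliary optimization problem. Note also that, like the paper's proof, yours only uses feasibility of \eqref{problem-1}, not the strictness in Assumption~\ref{ass}(2); your remarks about closedness of the finitely generated cone and the degenerate case $\cI=\emptyset$ are the right points of care and are handled correctly.
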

	\begin{proof}
		Suppose (for contradiction) $A^\T \bar r \ge 0$ for some $\bar r = \Pb_\cI(b) - b \in \cR(b)$. Let $\bar x$ be a point such that $\Pb_\cI(b) = A \bar x$, then it holds $A^\T (A\bar x-b) = A^\T \bar r \ge 0$. As a result, $\bar x$ is an optimal solution of 
		\begin{equation}
			\min_{x\in \R^n } ~ \| Ax-b\|_2^2 \quad {\rm s.t.} ~ x\ge 0.
			\nonumber
		\end{equation}
		Therefore, by Assumption~\ref{ass} (2), we know $\bar r = A\bar x-b = 0$, which is a contradiction to the definition of $\cR(b)$. 
	\end{proof}
	\begin{lemma}\label{lemma: stationary-2}
		Let $\cS_1:= \R^n_+ \cap \varphi^{-1}(\cR(b)) = \{u\in \R_+^n ~|~ \varphi(u) \in \cR(b) \}$, then it holds $\cS \subseteq  \cU^* \cup \cS_1$. 
	\end{lemma}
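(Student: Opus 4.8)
The plan is to compute $\na f$ explicitly and read the structure of stationary points directly off the coordinatewise optimality conditions. Writing $f(u) = g(u\circ u)$ with $g(x) = \tfrac12\|Ax-b\|_2^2$ and using the chain rule (the Jacobian of $u\mapsto u\circ u$ is $2\,\diag(u)$), one gets $\na f(u) = 2\,u\circ\big(A^\T\varphi(u)\big)$, where $\varphi(u) = A(u\circ u)-b$; this is exactly the expression already appearing in the update rule \eqref{gd-updates}. Consequently $u\in\cS$, i.e. $\na f(u)=0$, holds if and only if for every coordinate $i\in[n]$ we have either $u_i=0$ or $(A^\T\varphi(u))_i=0$.

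Next I would fix $u\in\cS$, set $x:=u\circ u\ge 0$, and let $\cI:=\{i\in[n]:x_i>0\}$ be the support of $x$. The displayed coordinatewise condition then says precisely that $(A^\T(Ax-b))_i=0$ for all $i\in\cI$, while $x_i=0$ for $i\notin\cI$. I claim $x$ is an optimal solution of the convex quadratic program $\min_z \tfrac12\|Az-b\|_2^2$ subject to $z\ge 0$ and $z_i=0$ for all $i\notin\cI$. Indeed $x$ is feasible; and since $x_i>0$ for every $i\in\cI$, the point $x$ lies in the relative interior of the feasible set $F=\{z:z_i\ge 0\ (i\in\cI),\ z_i=0\ (i\notin\cI)\}$, whose normal cone at $x$ is $\spn\{e_i:i\notin\cI\}$. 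Hence the first-order optimality condition for this convex problem is exactly $(A^\T(Ax-b))_i=0$ for $i\in\cI$, which holds; by convexity it is also sufficient, so $x$ is optimal. Therefore $Ax$ equals the projection $P_\cI(b)$ of $b$ onto $\{Az:z\ge 0,\ z_i=0\ \forall i\notin\cI\}$, i.e. $\varphi(u)=Ax-b=P_\cI(b)-b$.

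Finally I would split into two cases according to the definition of $\cR(b)$. If $P_\cI(b)=b$, then $\varphi(u)=0$, so $A(u\circ u)=b$ and $u\in\cU^*$. If $P_\cI(b)\neq b$, then $\varphi(u)=P_\cI(b)-b$ lies in $\cR(b)$ by definition, so $u\in\R^n_+\cap\varphi^{-1}(\cR(b))=\cS_1$. In either case $u\in\cU^*\cup\cS_1$, establishing the inclusion. (Note that this argument is purely structural and does not invoke Assumption~\ref{ass}; only the convexity of $g$ and the definitions are used.)

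The only mildly delicate step — the one I would write out with care — is the claim that the coordinatewise stationarity condition "$u_i=0$ or $(A^\T\varphi(u))_i=0$" coincides with the optimality condition of the support-restricted nonnegative least-squares problem. This is where strict positivity $x_i>0$ on $\cI$ is essential: it guarantees that none of the inequality constraints indexed by $\cI$ is active, so the normal cone collapses to the coordinate subspace complementary to $\cI$ and no extra complementary-slackness terms survive. Everything else is a direct computation.
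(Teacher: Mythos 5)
Your proposal is correct and follows essentially the same route as the paper: compute $\na f(u) = 2\,u\circ(A^\T\varphi(u))$, observe that on the support of $x=u\circ u$ the condition $(A^\T(Ax-b))_i=0$ holds, use the (convex) KKT/projection argument to identify $\varphi(u)=P_\cI(b)-b$ for a suitable index set $\cI$, and then case-split to land in $\cU^*$ or $\cS_1$. The only substantive difference is that you split on whether $P_\cI(b)=b$ rather than on whether $A^\T\varphi(u)=0$, which lets you conclude $u\in\cU^*$ without invoking the full-row-rank Assumption~\ref{ass}(1) that the paper's Case 1 uses --- a mild but genuine simplification of the hypotheses.
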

	\begin{proof}
		For any $u\in \cS$, it holds 
		\begin{equation}\label{stationary1}
			\na f(u) = 2u \circ [ A^\T \varphi(u) ] = 0 .
		\end{equation}
		
		(Case 1) 
		If $A^\T \varphi (u) = 0$, because $A$ has full row rank (Assumption~\ref{ass} (1)), it holds $u\in \cU^*$. 
		
		(Case 2) If $A^\T \varphi (u) \neq 0$, let $\cI := \{ i\in [n] ~|~ (A^\T \varphi(u))_i  =0   \}$. 
		Therefore, by \eqref{stationary1}, it holds $u_i = 0$ for all $i\in [n] \setminus \cI$. 
		By the definition of $\cI$ and the KKT condition, we know that $\td x := u\circ u$ is an optimal solution of 
		\begin{equation}
			\min_x ~~ \| Ax-b\|_2^2 \quad {\rm s.t.} ~ x_i = 0 ~~~ \forall ~ i\in [n]\setminus \cI .
		\end{equation}
		So we have $P_\cI(b) = A\td x$, hence $\varphi(u) = A\td x - b = P_{\cI}(b) - b \neq 0$, where the last inequality is by the assumption of (Case 2). As a result, we have $\varphi(u) \in \cR(b)$, and hence $u \in \cS_1$. 
	\end{proof}

	Define
	\begin{equation}
		\dt_1 = \dt_1(A,b) := \min_{\bar r \in \cR(b)} \left\{ \max_{i\in [n]} ( -A^\T \bar r )_i \right\} >0 ,
	\end{equation}
	where $\dt_1>0$ is by Lemma~\ref{lemma: stationary-1} and the fact that $\cR(b)$ is a finite set. 
	Let $\dt_2 = \dt_2(A,b)>0$ be small enough such that 
	\begin{equation}\label{def-dt2-1}
		\min_{\bar r \in \cR(b)}  \max_{i\in [n]} \inf_{r \in B_{\dt_2}(\bar r)} (-A^\T r)_i \ge \frac{\dt_1}{2} >0
	\end{equation} 
	and
	\begin{equation}\label{def-dt2-2}
		\dist \Big(  \varphi^{-1} ( B_{\dt_2}(\bar r^1) ) ,  \varphi^{-1} ( B_{\dt_2}(\bar r^2) ) \Big) \ge \dt_2
		\quad \forall ~ \bar r^1 , \bar r^2 \in \cR(b) .
	\end{equation}
	Next, define the set $\Omega_f(\ep):= \{ u\in \R^n_+ ~|~ f(u) \ge \ep \}$ and
	\begin{equation}
		\cF := \bar B_R(0) \bigcap \left[ \bigcap_{\bar r \in \cR(b)} ( \varphi^{-1} \big(B_{\dt_2}(\bar r)) \big)^c \right] \bigcap \ \Omega_f(\ep)
	\end{equation}
	where $R$ is the constant defined in Lemma~\ref{lemma:iterates-boundedness}. 
	Then we know that $\cF$ is a compact set. Moreover, by the definition of $\cF$, we know that $\cS_1 \cap \cF = \emptyset$ and $\cU^* \cap \cF = \emptyset$. As a result, 
	by Lemma~\ref{lemma: stationary-2}, we know $\cS \cap \cF = \emptyset $, so $\na f(u) \neq 0$ for all $u \in \cF$. As a result, we can define
	\begin{equation}\label{def: dt3}
		\dt_3 = \dt_3(A,b,R,\ep) := \min_{u\in \cF} \| \na f(u) \|_2 >0 .
	\end{equation}

	\subsection{Completing the proof of Lemma~\ref{lemma: global-conv-detailed}}\label{appsec: completing}
	
	First, we have the following claim.

	\begin{claim}\label{claim3}
		Let $k_0 := \lceil (\eta^{-1}+1) ( \log(1/\underline\al) +1 ) \rceil $ and define 
		$$
		M=M(A,b,R):= \sup_{u\in \bar B_R(0)} \| A^\T \varphi(u) \|_2, \quad T= T(A,b,R) := \Big\lceil \frac{8}{\dt_1} \max\{1,M\} \Big\rceil. 
		$$
		Then for any $k\ge k_0$, at least one of the two cases occurs:
		
		(1) $f(u^{Tk}) < \ep$.
		
		(2) $ f(u^k) - f(u^{Tk}) \ge \dt_4$, where
		\begin{equation}
			\dt_4 = \dt_4(A,b,R,\ep):= \min\left\{  \frac{\dt_3^2}{2},  \frac{1}{2+4\bar \eta} \dt_2^2,   \frac{8\dt_1^2}{81e^{4M}} \right\} .
		\end{equation}
	\end{claim}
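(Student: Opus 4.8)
The plan is to establish the contrapositive statement inside the window: assuming case~(1) fails, i.e. $f(u^{Tk})\ge\ep$, show that $f(u^k)-f(u^{Tk})\ge\dt_4$. First I would record the standing consequences of Corollary~\ref{cor1} and Lemma~\ref{lemma: per-iter-decrease}: the sequence $\{f(u^l)\}$ is non-increasing, so $f(u^l)\ge f(u^{Tk})\ge\ep$ for every $l\in\{0,\dots,Tk\}$, and by Lemma~\ref{lemma:iterates-boundedness} each such $u^l$ lies in $\bar B_R(0)\cap\Omega_f(\ep)$; by the definitions of $\cF$ and of the sets $\varphi^{-1}(B_{\dt_2}(\bar r))$ this forces each $u^l$ to lie either in $\cF$ or in exactly one "bad piece" $\varphi^{-1}(B_{\dt_2}(\bar r))$, $\bar r\in\cR(b)$, and by \eqref{def-dt2-2} the bad pieces are pairwise $\dt_2$-separated while $\cF$ is their common complement inside $\bar B_R(0)\cap\Omega_f(\ep)$. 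I would also sum \eqref{periter-decrease} over $l=k,\dots,Tk-1$ to get $\tfrac\eta2\sum_{l=k}^{Tk-1}\|\na f(u^l)\|_2^2=f(u^k)-f(u^{Tk})$, so it suffices to exhibit one of three mechanisms driving the right-hand side above $\dt_4$.

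The core is a trichotomy on the trajectory $u^k,\dots,u^{Tk}$ matched to the three terms of $\dt_4$. In mechanism~(a), if the trajectory visits $\cF$ at $\lceil\eta^{-1}\rceil$ or more of these indices, then since $\|\na f(\cdot)\|_2\ge\dt_3$ on $\cF$ by \eqref{def: dt3}, and since $\lceil\eta^{-1}\rceil\le k_0\le(T-1)k$ so these indices fit in the window, the accumulated decrease is at least $\lceil\eta^{-1}\rceil\cdot\tfrac\eta2\dt_3^2\ge\tfrac{\dt_3^2}{2}\ge\dt_4$. In mechanism~(b), assuming $\cF$ is visited at fewer than $\lceil\eta^{-1}\rceil$ indices, if the trajectory ever passes from one bad piece to a distinct bad piece (possibly through a short $\cF$-excursion), then by \eqref{def-dt2-2} the net displacement is at least $\dt_2$, and converting this into a function decrease via $\|u^{l+1}-u^l\|_2=\eta\|\na f(u^l)\|_2$, $\eta\le\bar\eta$, and \eqref{periter-decrease} yields a decrease of at least $\tfrac{1}{2+4\bar\eta}\dt_2^2\ge\dt_4$ over those steps (the constant $\tfrac{1}{2+4\bar\eta}$ rather than $\tfrac1{2\bar\eta}$ absorbing the two-step case). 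Having excluded (a) and (b), the window must contain, by a pigeonhole count against the at most $\lceil\eta^{-1}\rceil$ many $\cF$-visits, a single run of consecutive indices all in one bad piece $\varphi^{-1}(B_{\dt_2}(\bar r))$, of length of order $(T-1)k\ge(T-1)k_0\sim(T-1)\,\eta^{-1}(\log(1/\underline\al)+1)$ — long, and scaling like $\eta^{-1}$.

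On such a run I would invoke \eqref{def-dt2-1}: since $\varphi(u^l)\in B_{\dt_2}(\bar r)$ throughout, the activating coordinate $i=i(\bar r)$ has $(-A^\T r^l)_i\ge\dt_1/2$, so the multiplicative update gives geometric growth $u^{l+1}_i=u^l_i(1-2\eta(A^\T r^l)_i)\ge(1+\eta\dt_1)u^l_i$ while $u^{l+1}_i\le\tfrac32u^l_i$ by \eqref{stepsize-condition-1}, and the per-step decrease obeys $f(u^l)-f(u^{l+1})\ge\eta\|u^l\circ A^\T r^l\|_2^2\ge\tfrac{\eta\dt_1^2}{4}(u^l_i)^2$. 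Because the run length is of order $\eta^{-1}$ and $(1+\eta\dt_1)^{1/\eta}\ge e^{\dt_1/2}$, the growth factor over the run dominates $e^{2M}$ — this is exactly where the calibrations $T\asymp\max\{1,M\}/\dt_1$ (so the exponent clears $2M$) and $k\ge k_0$ (so the window has enough steps) are used — and summing the per-step decreases as a geometric series gives a total decrease along the run of order $\dt_1^2e^{-4M}$, i.e. at least the third term of $\dt_4$, a contradiction. This last step is the \emph{main obstacle}: one must (i) pin down a usable, $\eta$-free lower bound on the activating coordinate at the start of the run by iterating the multiplicative update together with $u^l_i\le R$ and the stepsize bound; (ii) verify that after the $\cF$-visit/bad-piece-transition bookkeeping of (a)--(b) the surviving run length genuinely scales like $\eta^{-1}$, which is precisely why the window endpoint is the growing quantity $Tk$ and not $k+T$; and (iii) keep every constant depending only on $(A,b,R,\ep)$. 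The residual cases are routine, and assembling (a)--(c) proves the claim.
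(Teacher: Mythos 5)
Your trichotomy is exactly the paper's: many visits to $\cF$ (paper's Case 1, giving $\dt_3^2/2$ via \eqref{def: dt3} and the per-iteration decrease), a transition between two distinct sets $\varphi^{-1}(B_{\dt_2}(\bar r))$ (paper's Case 2.1, giving $\tfrac{1}{2+4\bar\eta}\dt_2^2$ by telescoping displacements between two indices at distance at most $\eta^{-1}+2$), and a long stay in a single set $\varphi^{-1}(B_{\dt_2}(\bar r))$ (paper's Case 2.2). Your mechanisms (a) and (b) are correct and essentially verbatim the paper's arguments, including the constants.

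The gap is mechanism (c), which you yourself flag as the ``main obstacle'' without resolving it, and the resolution you hint at does not work. Summing the per-step decreases $\tfrac{\eta\dt_1^2}{4}(u^l_i)^2$ as a geometric series indeed yields an $\eta$-free total of order $\dt_1\,(u^{\mathrm{end}}_i)^2$, but only once you have a lower bound on the activating coordinate near the end of the window that is free of both $\eta$ and the initialization scale $\underline\al$. Your item (i) proposes to extract such a bound ``by iterating the multiplicative update together with $u^l_i\le R$''; boundedness \emph{above} by $R$ can only bound earlier values from above, never any value from below, and the coordinate at the start of the window can be as small as $\underline\al(1-\eta M)^{k}$ since nothing is known about the first $k$ iterations beyond the worst-case contraction $1-\eta M$ per step. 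The paper's actual argument runs the product over the \emph{entire} trajectory from $u^0=\al$: growth by $1+\eta\dt_1/2$ on every bad-piece step versus shrinkage by at most $1-\eta M$ on the at most $k+\eta^{-1}$ remaining steps, with the calibration $(1-\eta M)(1+T\dt_1\eta/2)\ge 1+\eta$ and the window endpoint proportional to $Tk$ precisely so that the growth steps outnumber \emph{all} shrink steps including the $k$ pre-window ones; together with the definition of $k_0$ (this is where $\log(1/\underline\al)$ enters, which your sketch never uses quantitatively) this gives $(u^{\mathrm{end\ of\ window}})_i\ge \underline\al(1+\eta)^{k_0+\eta^{-1}}\ge 1$. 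Stepping back at most $2\lceil\eta^{-1}\rceil$ iterations, at least $\lceil\eta^{-1}\rceil$ of which lie in the bad piece, the coordinate is still at least $\tfrac{4}{9}e^{-2M}$, so $\|\na f(u^j)\|_2\ge \tfrac{4\dt_1}{9e^{2M}}$ there, and summing $\tfrac{\eta}{2}\|\na f(u^j)\|_2^2$ over those indices gives the third term of $\dt_4$. Relatedly, your explanation of why the window is $[k,Tk]$ rather than $[k,k+O(\eta^{-1})]$ (``so the run length scales like $\eta^{-1}$'') misses the real reason: a run of length $\eta^{-1}$ is cheap to obtain, but only a window whose length grows linearly in $k$ lets the geometric growth defeat the unknown decay accumulated before iteration $k$. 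Without this ingredient, case (c) — and hence the claim — is not established.
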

	
	\textit{Proof of Claim~\ref{claim3}}.
	
	Fix any $k\ge k_0$. 
	Let $\bar k:= 2(T+1)k $. We assume that (1) does not occur, and prove that (2) occurs. 
	Define $\cJ_\cF:= \{j\in [k,\bar k)~|~ u^j \in \cF\}$ and $\cJ_{\cF^c}:= \{j\in [k,\bar k)~|~ u^j \in \cF^c\}$. 
	We discuss a few different cases. 
	
	\smallskip
	\noindent
	{\bf (Case 1)} $|\cJ_\cF| >  \eta^{-1}  $. By Lemma~\ref{lemma: per-iter-decrease}, for all $j\in \cJ_\cF$, 
	we have 
	\begin{equation}
		f(u^j) - f(u^{j+1}) \ge \frac{\eta}{2} \| \na f(u^j) \|_2^2 \ge \frac{\eta}{2} \dt_3^2
	\end{equation}
	where the last inequality is by the definition of $\dt_3$ in \eqref{def: dt3}. 
	As a result, 
	\begin{equation}
		f(u^k) - f(u^{\bar k}) \ge \sum_{j\in \cJ_\cF} f(u^j) - f(u^{j+1}) \ge \lceil \eta^{-1} \rceil  \frac{\eta}{2} \dt_3^2 \ge \frac{\dt_3^2}{2} \ . 
	\end{equation}
	
	\smallskip
	\noindent
	{\bf (Case 2)} $|\cJ_\cF| \le  \eta^{-1} $. 
	Note that for any $j\in \cJ_{\cF^c}$, it holds $u^j\notin \cF$, but we also have $u^j \in \bar B_R(0)$ (by Lemma~\ref{lemma:iterates-boundedness}) and $f(u^j) \ge \ep $ (by the assumption that $f(u^{\bar k})\ge \ep$). So by the definition of $\cF$, we know that there exists $\bar r \in \cR(b)$ such that $u^j \in \varphi^{-1} ( B_{\dt_2}(\bar r) )$. Below we discuss two cases.
	
	\smallskip
	
	{\bf (Case 2.1)} There exist $\bar r^1, \bar r^2 \in \cR(b)$ with $\bar r^1 \neq \bar r^2$ such that there exist $ j_1 , j_2\in \cJ_{\cF^c}$ with $j_1<j_2$ and $u^{j_1} \in  \varphi^{-1} ( B_{\dt_2}(\bar r^1) )$, $u^{j_2} \in  \varphi^{-1} ( B_{\dt_2}(\bar r^2) )$. 
	Since $|\cJ_\cF| \le  \eta^{-1} $ (by the assumption of (Case 2)), we can take $j_1< j_2$ with $j_2-j_1 \le  \eta^{-1}  +2$. As a result, 
	\begin{equation}
		\begin{aligned}
			f(u^{\bar k}) - f(u^k) \ge &
			f(u^{j_1}) - f( u^{j_2} ) \ge 
			\frac{1}{2\eta} \sum_{j=j_1}^{j_2-1} \| u^{j+1} - u^j \|_2^2 \ge 
			\frac{1}{2\eta (j_2-j_1)}  \Big( \sum_{j=j_1}^{j_2-1} \| u^{j+1}-u^j \|_2\Big)^2 \\
			\ge &
			\frac{1}{2\eta ( \eta^{-1}  +2)} \| u^{j_1} - u^{j_2}\|_2^2 \ge \frac{1}{2+4\bar \eta} \dt_2^2
		\end{aligned}
		\nonumber
	\end{equation}
	where the second inequality is by Lemma~\ref{lemma: per-iter-decrease} and the fact $u^{j+1}=u^j - \eta \na f(u^j)$; the third inequality is by Jensen's inequality; the fourth inequality is by $j_2-j_1 \le \eta^{-1} +2$ and triangular inequality; the last inequality is by \eqref{def-dt2-2}.

	\smallskip
	
	{\bf (Case 2.2)} There exists $\bar r \in \cR(b)$ such that $u^j \in \varphi^{-1} ( B_{\dt_2}(\bar r) )$ for all $j\in \cJ_{\cF^c}$. As a result, 
	by \eqref{def-dt2-1}, 
	there exists $i\in [n]$ such that 
	\begin{equation}\label{great1}
		( A^\T \varphi(u^j) )_i \le - \frac{\dt_1}{2} \quad \forall j\in \cJ_{\cF^c} .
	\end{equation}
	Therefore, we have
	\begin{equation}\label{cc1}
		\frac{(u^{j+1})_i}{ (u^j)_i } = 1-\eta ( A^\T \varphi(u^j) )_i \ge 1 + \frac{\dt_1\eta}{2} \quad \forall j\in \cJ_{\cF^c} .
	\end{equation}
	On the other hand, 
	\begin{equation}\label{cc2}
		\frac{(u^{j+1})_i}{ (u^j)_i } = 1-\eta ( A^\T \varphi(u^j) )_i  \ge 1-\eta M \quad \forall~  j \in \cJ_\cF ~ {\rm or} ~ j\le k-1.  
	\end{equation}
	Combining \eqref{cc1} and \eqref{cc2}, and note that $|\cJ_{\cF}| \le \lfloor \eta^{-1} \rfloor$, so
	we have
	\begin{equation}\label{cc3}
		\begin{aligned}
			\frac{ (u^{\bar k})_i }{ (u^{0})_i } = & \prod_{j=0}^{\bar k-1} 	\frac{(u^{j+1})_i}{ (u^j)_i } \ge 
			(1-\eta M)^{k+\lfloor \eta^{-1} \rfloor} ( 1+ \eta \dt_1/2 )^{\bar k-k-\lfloor \eta^{-1}\rfloor } \\
			\ge &
			(1-\eta M)^{k+ \eta^{-1} } ( 1+ \eta \dt_1/2 )^{\bar k-k- \eta^{-1} } 
			\\
			\ge &
			\Big[(1-\eta M) (1+\dt_1\eta/2)^T \Big]^{k+\eta^{-1}}		
			\ge 
			\Big[(1-\eta M) (1+T\dt_1\eta/2) \Big]^{k+\eta^{-1}}	,
		\end{aligned}
	\end{equation}
	where the third inequality is because $\bar k = 2(T+1)k \ge (T+1)(k+\eta^{-1})$ (since $k\ge k_0 \ge \eta^{-1}$). Recall that $\eta M \le \bar\eta M \le 1/2$  (by the definition of $\bar\eta$ in \eqref{def: bar-eta}), we have
	\begin{equation}\label{cc4}
		\begin{aligned}
			(1-\eta M) (1+T\dt_1\eta/2) = & 1-\eta M + \frac{\dt_1 \eta T}{2} - \eta M \cdot 	\frac{\dt_1 \eta T}{2} \\
			\ge 	&
			1-\eta M + \frac{\dt_1 \eta T}{4}  \ge 	1 + \frac{\dt_1 \eta T}{8} \ge 1+\eta , 
		\end{aligned}
	\end{equation}
	where the first inequality is because $\eta M \le \bar\eta M \le 1/2$, then second and third inequalities are by the definition of $T$. 
	As a result of \eqref{cc3} and \eqref{cc4}, we have
	\begin{equation}\label{cc5}
		{ (u^{\bar k})_i } \ge (u^{0})_i  (1+\eta)^{k+\eta^{-1}} \ge
		\underline\al (1+\eta)^{k_0+\eta^{-1}} \ge \underline\al (1+\eta)^{ (1+\eta^{-1}) \log(1/\underline\al) } \ge \underline\al e^{\log(1/\underline\al)} = 1 , 
	\end{equation}
	where the second inequality is by the definition of $\underline\al$ and the assumption $k\ge k_0$; the third inequality is by the definition of $k_0$; the fourth inequality is by the elementary inequality in Lemma~\ref{lemma: log-ineq}. 
	
	Now we define 
	\begin{equation}
		\cJ':= \{ j \in [\bar k - 2 \lceil \eta^{-1} \rceil -1, \bar k -1]~|~ j\in \cJ_{\cF^c} \} . 
		\nonumber
	\end{equation}
	Recall that 
	$|\cJ_\cF| \le  \lceil \eta^{-1} \rceil $ (by the assumption of (Case 2)), so we know 
	\begin{equation}\label{great2}
		|\cJ'| \ge \lceil \eta^{-1} \rceil .
	\end{equation}
	For any $j\in \cJ'$, by the assumption of (Case 2.2), 
	we have 
	$u^j \in \varphi^{-1} (B_{\dt_2}(\bar r)) $, hence by \eqref{great1} we have $(A^\T \varphi(u^j))_i < -\dt_1/2$. Therefore
	\begin{equation}\label{cc6}
		\| \na f(u^j) \|_2 \ge - (\na f(u^j))_i = -2 (u^j)_i (A^\T \varphi(u^j))_i \ge \dt_1(u^j)_i \quad \forall ~ j\in \cJ'.
	\end{equation}
	On the other hand, for any $j\in \cJ'$, 
	\begin{equation}\label{cc7}
		(u^j)_i \ge \frac{u^{\bar k}}{(1+\eta M)^{2\lceil \eta^{-1} \rceil}} \ge 
		\frac{1}{(1+\eta M)^{2 \eta^{-1} +2}} \ge \frac{4}{9e^{2M}} , 
	\end{equation}
	where the second inequality made use of \eqref{cc5}, and the final inequality is because 
	$$
	(1+\eta M)^{2 \eta^{-1} } \le (e^{\eta M})^{2\eta^{-1}} = e^{2M}
	$$
	and $(1+\eta M )^2 \le \frac{9}{4}$ (because $\eta M \le 1/2$). 
	By \eqref{cc6} and \eqref{cc7}, we have
	\begin{equation}
		\| \na f(u^j) \|_2 \ge \frac{4 \dt_1}{9e^{2M}} \quad \forall ~ j\in \cJ'.
		\nonumber
	\end{equation}
	As a result, 
	\begin{equation}
		f(u^k) - f(u^{\bar k}) \ge \sum_{j\in \cJ'} f(u^j) - f(u^{j+1}) \ge \sum_{j\in \cJ'}
		\frac{1}{2} \eta \| \na f(u^j) \|_2^2 \ge \frac{8\dt_1^2}{81e^{4M}} \eta |\cJ' | \ge 
		\frac{8\dt_1^2}{81e^{4M}} , 
		\nonumber
	\end{equation}
	where the last inequality is because of \eqref{great2}. 
	
	The proof of Claim~\ref{claim3} is complete by combining the discussions in (Case 1) and (Case 2). 
	
	\endproof
	
	With Claim~\ref{claim3} at hand we are ready to wrap up the proof of Lemma~\ref{lemma: global-conv-detailed}. 
	Define
	\begin{equation}
		K = K(A,b,R,\ep) := T^{\ga},  \quad where~~~~ \ga := \lceil f(u^0)/\dt_4 \rceil +1 . 
	\end{equation}
	Then for any $k\ge K k_0 $, suppose (for contradiction) $f(u^k)>\ep$, then $f(u^j) >\ep$ for all $j\le k$, and by Claim~\ref{claim3} we have 
	\begin{equation}
		f(u^0) - f(u^k) \ge f(k_0) - f(T^\ga k_0) = \sum_{s=1}^\ga f(T^{s-1}k_0) - f(T^s k_0) \ge \ga \dt_4 > f(u^0), 
		\nonumber
	\end{equation}
	which is a contradiction as $f(u^k) \ge 0$. As a result, we know that for any 
	$k\ge K k_0 =  \lceil (\eta^{-1}+1) ( \log(1/\al_0) +1 ) \rceil  K$, it holds $f(u^k) \le \ep$.

	\section{Proof of Theorem~\ref{theorem: limit-point-approx-solution}}\label{appsec: proof of theorem limit point}
	By the update formula~\eqref{gd-updates} we have
	\begin{equation}\label{bb1}
		u^k = u^0 \circ \prod_{j=0}^{k-1} \Big( 1_n - \eta A^\T r^j \Big)
	\end{equation}
	for all $k\ge 1$. Note that by Theorem~\ref{theorem: global-linear-conv}, we know $\sum_{j=0}^{\infty} \| r^j\|_2<\infty$, so the update \eqref{bb1} converges as $k\rightarrow\infty$. 
	
	By Lemma~\ref{lemma: log-bound} and recall that $x^k = u^k\circ u^k$ we have 
	\begin{equation}
		4 \eta \sum_{j=0}^{k-1}  A^\T r^j  -	16\eta^2 \sum_{j=0}^{k-1}  (A^\T r^j) \circ  (A^\T r^j)  \le \ 
		\log\left( \frac{x^k}{x^0} \right) \le \ 
		4 \eta  \sum_{j=0}^{k-1} A^\T r^j . 
	\end{equation}
	Taking $k\rightarrow \infty$, and denote $\nu := 4 \eta \sum_{j=0}^\infty r^j$ and $\td w:= 16\eta \sum_{j=0}^{\infty}  (A^\T r^j) \circ  (A^\T r^j)$, we have
	\begin{equation}\label{bb2}
		A^\T \nu - 	\eta \td w \le 	
		\log\left( \frac{x^\infty}{x^0} \right) \le \ A^\T \nu .
	\end{equation}
	Note that
	\begin{equation}\label{bb3}
		\begin{aligned}
			\| \td w \|_1 \le \ &  16 \eta \sum_{j=0}^\infty \| A^\T r^j \|_2^2 \le 16 L \eta \sum_{j=0}^\infty \| r^j \|_2^2	\\
			\le \ & 
			16 L\eta \left(  
			\bar C \frac{\log(1/ \underline\al)}{\eta} K(A, b, R, \dt) \| r^0\|_2^2 + \frac{2}{\eta c \tau} \dt^\tau
			\right)	 \\
			= \ & 
			16 L \log(1/\underline\al) \left(  
			\bar C  K(A, b, R, \dt) \| r^0\|_2^2 + \frac{2}{ c \tau \log(1/\underline\al)} \dt^\tau
			\right)	 \\ 
			\le \ & 
			16 L \log(1/\underline\al) \left(  
			\bar C  K(A, b, R, \dt) \| r^0\|_2^2 + \frac{2}{ c \tau \log(2)} \dt^\tau
			\right)	
		\end{aligned}
	\end{equation}
	where $K$ is the constant given in Lemma~\ref{lemma: global-conv-detailed}, $\dt$, $c$ and $\tau$ are constants given in Lemma~\ref{lemma: lojasiewicz}, and $\bar C$ is a universal constant. The third inequality is by Lemma~\ref{lemma: global-conv-detailed} and Corollary~\ref{cor: sum-bound}; the fourth inequality is by our assumption that $\underline\al \in (0,1/2)$. Define
	\begin{equation}\label{bb4}
		C = C(A,b,R) :=  16 L  \left(  
		\bar C  K(A, b, R, \dt) \| r^0\|_2^2 + \frac{2}{ c \tau \log(2)} \dt^\tau
		\right)	.
	\end{equation}
	Then we have $	\| \td w \|_1 \le \log(1/\underline \al) C$. 
	Define 
	\begin{equation}\label{def-w}
		w := 	\left(  A^\T \nu - \log\Big( \frac{x^\infty}{x^0} \Big)  \right) \frac{1}{\eta \log(1/\underline \al)} .
	\end{equation}
	Then by \eqref{bb2} we have $w \ge 0$, and by \eqref{bb3} and \eqref{bb4} we have 
	$\| w\|_1\le   \| \td w \|_1/ \log(1/\underline\al) \le C$. Define the function
	\begin{equation}
		h(x) := \sum_{i=1}^n x_i \log(x_i /\al_i^2) - x_i + \eta \log(1/\underline\al) w_ix_i .
		\nonumber
	\end{equation}
	Then we have 
	\begin{equation}
		(\na h(x^\infty))_i = \log(x_i^\infty /\al_i^2) +  \eta \log(1/\underline\al) w_i = (A^\T \nu)_i  , 
		\nonumber
	\end{equation}
	where the second equality is by the definition of $w$ in \eqref{def-w}. Moreover, since $Ax^\infty - b = 0$, we know that $x^\infty$ satisfies the KKT condition of the problem \eqref{discrete-approx-problem-1} (note that the objective function in \eqref{discrete-approx-problem-1} equals $h$). 
	Therefore it is an optimal solution of \eqref{discrete-approx-problem-1}. 
	
	If we take $ \al_i = \exp ( - c_i/(2\lam) ) $, then 
	$\log(1/\al_i^2)  = c_i/\lam$, and 
	$\log(\underline \al) =  - \frac{  \bar c }{2\lam}$. As a result, problem \eqref{discrete-approx-problem-1} is equivalent to \eqref{discrete-approx-problem-2}.

	\section{Technical results}
	
	\begin{lemma}\label{lemma: log-bound}
		Given $K\ge1$, suppose we take $\eta_k>0$ (for $k=0,...,K-1$) such that \eqref{stepsize-condition-1} holds true for all $k = 0,1,...,K-1$. 
		Then we have
		\begin{equation}
			2 \sum_{j=0}^{K-1} \eta_j A^\T r^j  -	8\sum_{j=0}^{K-1} \eta_j^2  (A^\T r^j) \circ  (A^\T r^j)  \le \ 
			\log\left( \frac{u^K}{u^0} \right) \le \ 
			2 \sum_{j=0}^{K-1} \eta_j  A^\T r^j . 
			\nonumber
		\end{equation}
	\end{lemma}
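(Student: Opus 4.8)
The plan is to take the elementwise logarithm in the multiplicative form of the update \eqref{gd-updates} and telescope. For each $j$ we have $u^{j+1} = u^j \circ \big( 1_n - 2\eta_j A^\T r^j \big)$, hence coordinatewise
\begin{equation}
\log (u^{j+1})_i - \log (u^j)_i \;=\; \log\!\big( 1 - 2\eta_j (A^\T r^j)_i \big),
\nonumber
\end{equation}
and summing this over $j = 0,1,\dots,K-1$ gives the telescoping identity $\log( u^K / u^0 ) = \sum_{j=0}^{K-1} \log\!\big( 1_n - 2\eta_j A^\T r^j \big)$, with $\log(\cdot)$ understood elementwise. Everything then reduces to a one-dimensional estimate of each factor $\log(1-t)$ with $t = 2\eta_j (A^\T r^j)_i$.

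The only thing that needs checking is that these arguments stay in a controlled range, and this is exactly what the stepsize rule provides --- indeed, only the first part of \eqref{stepsize-condition-1} is used here, the bound $\eta_j \le \tfrac{1}{5L\|u^j\|_\infty^2}$ being irrelevant for this lemma. From $\eta_j \le \tfrac{1}{4\|A^\T r^j\|_\infty}$ we get $\big| 2\eta_j (A^\T r^j)_i \big| \le 2\eta_j \|A^\T r^j\|_\infty \le \tfrac12$ for every $i$ and every $j \le K-1$; in particular each factor $1 - 2\eta_j (A^\T r^j)_i$ lies in $[\tfrac12,\tfrac32]$, so all the logarithms above are well defined (this also re-derives, as a byproduct, that $u^k>0$ for all $k$, given $u^0>0$).

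On the interval $|t| \le \tfrac12$ one has the elementary two-sided inequality $-t - 2t^2 \le \log(1-t) \le -t$: the right-hand bound is just $\log(1+s)\le s$ with $s = -t$, and the left-hand bound follows from the expansion $\log(1-t) = -\sum_{\ell\ge1} t^\ell/\ell$ by bounding the tail $\sum_{\ell\ge2} |t|^\ell/\ell \le 2 t^2$ when $|t|\le\tfrac12$. Applying this with $t = 2\eta_j (A^\T r^j)_i$, so that the quadratic term becomes $8\eta_j^2 (A^\T r^j)_i^2$, then summing over $j$ and reassembling the coordinates into vectors yields the two-sided estimate for $\log(u^K/u^0)$ in the statement. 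I do not expect any genuine obstacle here: once the bound $|2\eta_j(A^\T r^j)_i|\le\tfrac12$ is secured the argument is a direct substitution, and the only mild point is to apply the scalar inequalities on the correct interval and with endpoints allowed (a sharper constant $1$ in place of $2$ is equally valid on $|t|\le\tfrac12$ and would give the slightly stronger quadratic term with constant $4$ instead of $8$; the looser $8$ is kept for convenience).
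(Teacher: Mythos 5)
Your proof is correct and takes essentially the same route as the paper's: the paper likewise telescopes $\log(u^{j+1})_i-\log(u^j)_i$ coordinatewise and controls the quadratic term (via a Taylor/Lagrange-remainder expansion combined with $\tfrac12 u^j\le u^{j+1}\le\tfrac32 u^j$, rather than your explicit scalar bound $-t-2t^2\le\log(1-t)\le -t$ on $|t|\le\tfrac12$), arriving at the same constants, and your observation that only the first half of \eqref{stepsize-condition-1} is needed is accurate. One small caveat: with $r^j=Ax^j-b$ your inequalities literally yield the bounds with $-2\sum_j\eta_j A^\T r^j$ rather than $+2\sum_j\eta_j A^\T r^j$; this sign is handled the same (loose) way in the paper's own statement and proof, and is immaterial for the lemma's downstream uses, but you should not claim the displayed signs follow verbatim from $\log(1-t)\le -t$.
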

	\begin{proof}
		For any $j\le K-1$ and $i \in [n]$, By Taylor expansion,
		\begin{equation}\label{ineq-7}
			\log( (u^{j+1})_i ) - 	\log( (u^{j})_i ) = \frac{(u^{j+1})_i - (u^{j})_i }{ (u^{j})_i} - 
			\frac{1}{2} \left(  \frac{(u^{j+1})_i - (u^{j})_i }{ (w^{j})_i}  \right)^2
		\end{equation}
		where $(w^j)_i$ is between $(u^j)_i$ and $(u^{j+1})_i$. This implies
		\begin{equation}\label{ineq-8}
			\log( (u^{j+1})_i ) - 	\log( (u^{j})_i ) \le \frac{(u^{j+1})_i - (u^{j})_i }{ (u^{j})_i} = 2\eta_j (A^\T r^j)_i . 
		\end{equation}
		By Lemma~\ref{lemma: per-iter-decrease} we know $\frac{1}{2} u^j \le u^{j+1} \le \frac{3}{2} u^j$, so we have $(w^{j})_i \ge \frac{1}{2} u^j$, and hence \eqref{ineq-7} implies
		\begin{equation}\label{ineq-9}
			\begin{aligned}
				\log( (u^{j+1})_i ) - 	\log( (u^{j})_i ) \ge & 
				\frac{(u^{j+1})_i - (u^{j})_i }{ (u^{j})_i} - 
				2 \left(  \frac{(u^{j+1})_i - (u^{j})_i }{ (u^{j})_i}  \right)^2 \\
				= &
				2\eta_j (A^\T r^j)_i - 8\eta_j^2 (A^\T r^j)_i^2 . 
			\end{aligned}
		\end{equation}
		The proof is complete by summing \eqref{ineq-8} and \eqref{ineq-9} over $j$ from $0$ to $K-1$.
	\end{proof}
	
	For any $\td b \in \R^m$, define $\cX^*(\td b) := \{ x\in \R^n_+ ~|~ Ax=\td b \}$. 
	\begin{lemma}\label{lemma: hoffman}
		(Hoffman constant)
		There is a constant $H = H(A)$ that only depends on $A$ such that for any $\td b$ such that $\cX^*(\td b) \neq \emptyset$ and any $x \in \R^n_+$, 
		\begin{equation}
			{\rm dist}(x, \cX^*(\td b)) \le H \| Ax-\td b\|_2 . 
		\end{equation}
	\end{lemma}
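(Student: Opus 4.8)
The plan is to reduce the claim to the classical Hoffman error bound for polyhedra, whose constant depends only on the coefficient matrix of the defining inequalities and not on the right-hand side. First I would write the feasible set as a system of linear inequalities in which the coefficient matrix is built solely from $A$: namely
\begin{equation}
\cX^*(\td b) = \big\{ x \in \R^n ~:~ Cx \le d(\td b) \big\}, \qquad C := \begin{bmatrix} A \\ -A \\ -I_n \end{bmatrix}, \qquad d(\td b) := \begin{bmatrix} \td b \\ -\td b \\ 0 \end{bmatrix},
\nonumber
\end{equation}
where the first $2m$ rows encode $Ax = \td b$ and the last $n$ rows encode $x \ge 0$; here $I_n$ is the $n\times n$ identity, so $C$ depends only on $A$.

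Next I would invoke Hoffman's lemma in the form: for each matrix $C$ there is a finite $\theta(C)$ such that, for every $d$ with $\{x : Cx \le d\} \neq \emptyset$ and every $x$, one has $\dist\big(x, \{x : Cx \le d\}\big) \le \theta(C)\,\| (Cx - d)_+ \|_2$. Since $\cX^*(\td b) \neq \emptyset$ by hypothesis, this applies with $d = d(\td b)$ and yields, for any $x \in \R^n$,
\begin{equation}
\dist\big(x, \cX^*(\td b)\big) \le \theta(C)\, \Big\| \big[ (Ax - \td b)_+ ;\ (\td b - Ax)_+ ;\ (-x)_+ \big] \Big\|_2 .
\nonumber
\end{equation}
It then remains to specialize to $x \in \R^n_+$: in that case $(-x)_+ = 0$, and a coordinatewise check gives $\|(Ax - \td b)_+\|_2^2 + \|(\td b - Ax)_+\|_2^2 = \|Ax - \td b\|_2^2$, since for each $i$ one has $\max\{(Ax-\td b)_i,0\}^2 + \max\{(\td b - Ax)_i,0\}^2 = (Ax - \td b)_i^2$. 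Hence $\dist(x, \cX^*(\td b)) \le \theta(C)\,\|Ax - \td b\|_2$, and one sets $H(A) := \theta(C)$.

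The only real content is Hoffman's lemma itself, so if a self-contained treatment is wanted I would prove it directly. For $x \notin \{Cx \le d\}$ let $\bar x$ be its Euclidean projection onto this polyhedron; the projection optimality condition expresses $x - \bar x = \sum_{i \in S} \mu_i c_i$ with $\mu_i > 0$, where $c_i \in \R^n$ is the $i$-th row of $C$ written as a column, the indices in $S$ correspond to constraints active at $\bar x$, and — after a Carathéodory reduction — the vectors $\{c_i\}_{i\in S}$ are linearly independent. Then $\|x - \bar x\|_2^2 = \sum_{i\in S}\mu_i(c_i^\T x - d_i) \le \sum_{i\in S}\mu_i (c_i^\T x - d_i)_+ \le \|\mu_S\|_2\,\|(Cx-d)_+\|_2$, using $c_i^\T \bar x = d_i$ on $S$, $\mu_i>0$, and Cauchy--Schwarz; while linear independence makes $C_S C_S^\T$ invertible (with $C_S$ the submatrix of rows indexed by $S$), so $\mu_S = (C_S C_S^\T)^{-1}C_S(x-\bar x)$ and $\|\mu_S\|_2 \le \|(C_S C_S^\T)^{-1} C_S\|_2\,\|x - \bar x\|_2$. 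Cancelling one factor of $\|x - \bar x\|_2$ gives the bound with $\theta(C)$ the maximum of $\|(C_S C_S^\T)^{-1} C_S\|_2$ over all row-subsets $S$ of $C$ of full row rank. I do not anticipate a genuine obstacle here; the one point demanding care is that the constant be uniform over $\td b$, which is precisely guaranteed by the fact that $C$ — hence $\theta(C)$ — is independent of $\td b$.
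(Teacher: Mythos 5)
Your proposal is correct and follows essentially the same route as the paper: rewrite $\cX^*(\td b)$ as $\{x : \bar A x \le \bar b\}$ with $\bar A = [A^\T, -A^\T, -I_n]^\T$ depending only on $A$, invoke Hoffman's error bound for this fixed matrix, and note that for $x \ge 0$ the residual $\|(\bar A x - \bar b)_+\|_2$ collapses to $\|Ax - \td b\|_2$. The only difference is that the paper simply cites a reference for Hoffman's lemma, whereas you additionally sketch a correct self-contained proof of it via projection and a Carath\'eodory reduction; this is fine but not needed.
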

	\begin{proof}
		Define $ \bar A := [ A^\T , -A^\T, -I_n ]^\T \in \R^{(2m+n)\times n}$ and $\bar b := [\td b, -\td b, 0]$, then we can write $\cX^*(\td b) = \{ x\in \R^n ~|~ \bar Ax \le \bar b \}$. Take $H$ to be the Hoffman constant (see e.g. \cite{pena2021new}) for $\bar A$, then we have 
		\begin{equation}
			{\rm dist}(x, \cX^*(\td b)) \le H \| (\bar Ax-\bar b)_+\|_2 = H \| Ax-\td b\|_2 . \nonumber
		\end{equation}
	\end{proof}

	\begin{lemma}\label{lemma: poly-gradient-ineq}
		(Lojasiewicz's gradient inequality for polynomials) Let $h$ be a polynomial on $\R^n$ with degree $d$. Suppose that $h(0) = 0$ and $\na h(0) = 0$, then there exist constants $\bar c,\bar \ep>0$ such that for all $\| x\|_2 \le \bar \ep$ we have
		\begin{equation}
			\| \na h(x) \|_2 \ge c |h(x)|^{1-\bar \tau}
		\end{equation}
		where 
		\begin{equation}
			\bar \tau = \bar\tau_{n,d}:= \left\{
			\begin{array}{ll}
				1, & d=1, \\
				d (3d-3)^{-(n-1)}, & d \ge 2.
			\end{array}
			\right.
		\end{equation}
	\end{lemma}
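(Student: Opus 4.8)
The plan is the following. The case $d=1$ is degenerate: an affine $h$ with $h(0)=0$ and $\na h(0)=0$ is identically zero, so there is nothing to prove; henceforth assume $d\ge 2$ and $h\not\equiv 0$. For $d\ge 2$ the statement is a Łojasiewicz gradient inequality with an \emph{explicit} exponent, and I would prove it by induction on the number of variables $n$, the inductive step costing a multiplicative factor $3d-3$ in the exponent at each stage (this is the mechanism behind the $(3d-3)^{-(n-1)}$ in $\bar\tau_{n,d}$).

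\textbf{Base case $n=1$.} Since $h(0)=h'(0)=0$ and $h\not\equiv 0$, write $h(x)=x^{k}g(x)$ with $2\le k\le d$, $g$ a polynomial and $g(0)\neq 0$, so $h'(x)=x^{k-1}\bigl(kg(x)+xg'(x)\bigr)$. Pick $\bar\ep>0$ small enough that on $[-\bar\ep,\bar\ep]$ we have $|g(x)|\ge\tfrac12|g(0)|$, $|g(x)|\le 2|g(0)|$, $\bigl|kg(x)+xg'(x)\bigr|\ge\tfrac12 k|g(0)|$, and $|h(x)|\le 1$. Then for $0<|x|\le\bar\ep$, using $|x|^{k}=|h(x)|/|g(x)|\ge|h(x)|/(2|g(0)|)$ and that $t\mapsto t^{1-1/k}$ is increasing,
\[
|h'(x)|\ \ge\ \tfrac12 k|g(0)|\,|x|^{k-1}\ =\ \tfrac12 k|g(0)|\,\bigl(|x|^{k}\bigr)^{1-1/k}\ \ge\ \frac{k|g(0)|}{2\,(2|g(0)|)^{1-1/k}}\,|h(x)|^{1-1/k},
\]
and since $|h(x)|\le 1$ and $1-1/k\le 1-1/d$ the right-hand side is $\ge c\,|h(x)|^{1-1/d}$. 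This gives the claim for $n=1$ with $\bar\tau=1/d$.

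\textbf{Inductive step.} Assume the bound in $n-1$ variables with exponent $1-\bar\tau_{n-1,d}$, and suppose toward a contradiction that in $n$ variables the inequality $\|\na h(x)\|\ge c\,|h(x)|^{1-\bar\tau_{n,d}}$ fails in every neighbourhood of $0$ for every $c>0$. Then there is $x_j\to 0$ with $\|\na h(x_j)\|/|h(x_j)|^{1-\bar\tau_{n,d}}\to 0$, and by the curve selection lemma — in an effective form that bounds the order of the arc in terms of $n$ and $d$ — there is a nonconstant real-analytic arc $\ga(s)$ with $\ga(0)=0$ along which $\|\na h(\ga(s))\|/|h(\ga(s))|^{1-\bar\tau_{n,d}}\to 0$ as $s\to 0^{+}$. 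Differentiating and applying Cauchy–Schwarz,
\[
\Bigl|\tfrac{d}{ds}h(\ga(s))\Bigr|\ =\ \bigl|\langle\na h(\ga(s)),\ga'(s)\rangle\bigr|\ \le\ \|\na h(\ga(s))\|\,\|\ga'(s)\|,
\]
so comparing orders of vanishing in $s$ bounds $\mathrm{ord}_{s}(h\circ\ga)$ from below by $\mathrm{ord}_{s}(\na h\circ\ga)+\mathrm{ord}_{s}(\ga')$. Combining this with the inductive hypothesis applied to the restriction of $h$ in a generic direction transverse to $\ga$ — equivalently, controlling the Łojasiewicz exponent of $h$ along its polar curves by that of its $(n-1)$-variable slices — produces the admissible exponent $1-\bar\tau_{n,d}$, where the factor $3d-3=3(d-1)$ per dimension enters as a Bézout-type bound on the degree of those polar curves (equivalently, on the multiplicity of the worst-case arc $\ga$).

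\textbf{Main obstacle.} The difficulty is concentrated entirely in the inductive step, and specifically in two points: (i) producing the analytic arc together with a genuine, uniform bound on its order/multiplicity, which needs the effective curve-selection (Nash-arc) estimates and the degree bound $3(d-1)$ on polar curves; and (ii) relating the Łojasiewicz exponent of $h$ to that of its generic linear slices, since in general this exponent can jump under restriction. Making (i) and (ii) precise with the explicit constant is exactly the theorem of D'Acunto and Kurdyka on explicit Łojasiewicz gradient exponents for polynomials; granting that result the lemma is immediate, and the only additional work is the elementary base case above and the $d=1$ degeneracy.
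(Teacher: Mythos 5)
Your proposal ultimately rests on the explicit gradient-inequality theorem of D'Acunto and Kurdyka, which is exactly what the paper does: it gives no proof and simply cites Theorem 4.2 of that work, so your treatment is essentially the same as the paper's. The inductive sketch you outline is not itself a complete argument (as you acknowledge, the effective curve-selection and slicing steps are precisely the content of the cited theorem), but since the lemma is invoked as a known result in both cases, nothing further is required.
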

	See Theorem 4.2 of \cite{d2005explicit} for a proof of Lemma~\ref{lemma: poly-gradient-ineq}.

	\begin{lemma}\label{lemma: lojasiewicz}
		(Lojasiewicz's gradient inequality for $f$) 
		There exist constants $\dt = \dt(A,b, R)>0$, $c = c(A,b, R)>0$ and $\tau  \in (0,1)$ such that 
		\begin{equation}
			\| \na f(u) \|_2^2 \ge c \Big( f(u) \Big)^{2-\tau} \quad 
		\end{equation}
		for all $u\in \R^n_+$ satisfying $f(u ) \le \dt$ and $\| u\|_2 \le R$. 
		
	\end{lemma}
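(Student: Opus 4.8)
The plan is to deduce the claim from the explicit \L ojasiewicz gradient inequality for polynomials (Lemma~\ref{lemma: poly-gradient-ineq}), localized around the zero set of $f$ and then made uniform by compactness. Write $\cU^* := \{u\in\R^n_+ : A(u\circ u)=b\}$, which is precisely the set of minimizers of $f$ on $\R^n_+$. The crucial elementary fact is that $\nabla f(u^*) = 2\,u^*\circ\big(A^\T(A(u^*\circ u^*)-b)\big) = 0$ for every $u^*\in\cU^*$, so at each such point $f$ and $\nabla f$ vanish simultaneously. If $\cU^*\cap\bar B_R(0)=\emptyset$, then $f$ is bounded below by some $\dt_0>0$ on the compact set $\bar B_R(0)\cap\R^n_+$ by continuity, and the claim holds vacuously for any $\dt<\dt_0$; so we assume $\cU^*\cap\bar B_R(0)\neq\emptyset$ henceforth.

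For a fixed $u^*\in\cU^*\cap\bar B_R(0)$, I would apply Lemma~\ref{lemma: poly-gradient-ineq} to the polynomial $v\mapsto f(u^*+v)$. This polynomial has degree $4$ (its quartic part $\tfrac12\|A(v\circ v)\|_2^2$ is not identically zero unless $A=0$, a trivial case) and, by the preceding paragraph, vanishes together with its gradient at $v=0$. Lemma~\ref{lemma: poly-gradient-ineq} therefore supplies constants $\bar c_{u^*},\bar\ep_{u^*}>0$ and an exponent $\bar\tau = \bar\tau_{n,4}\in(0,1)$, depending only on $n$ and the degree, such that $\|\nabla f(u)\|_2 \ge \bar c_{u^*}\, f(u)^{1-\bar\tau}$ for all $u\in\bar B_{\bar\ep_{u^*}}(u^*)$.

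Next I would cover the compact set $\cU^*\cap\bar B_R(0)$ by finitely many balls $B_{\bar\ep_{u^*_j}/2}(u^*_j)$, $j=1,\dots,J$, and set $\rho := \tfrac12\min_{j}\bar\ep_{u^*_j}>0$ and $\underline c := \min_j \bar c_{u^*_j}>0$. A triangle-inequality argument then shows $\|\nabla f(u)\|_2 \ge \underline c\, f(u)^{1-\bar\tau}$ whenever $\dist\big(u,\cU^*\cap\bar B_R(0)\big)<\rho$. It remains to produce $\dt\in(0,1]$ such that $f(u)\le\dt$ and $\|u\|_2\le R$ force $\dist\big(u,\cU^*\cap\bar B_R(0)\big)<\rho$; this is routine: if it failed, a sequence $u^{(k)}\in\bar B_R(0)\cap\R^n_+$ with $f(u^{(k)})\to 0$ and $\dist\big(u^{(k)},\cU^*\cap\bar B_R(0)\big)\ge\rho$ would have a subsequential limit $\bar u\in\bar B_R(0)\cap\R^n_+$ with $f(\bar u)=0$, i.e.\ $\bar u\in\cU^*\cap\bar B_R(0)$, contradicting $\dist\big(\bar u,\cU^*\cap\bar B_R(0)\big)\ge\rho>0$. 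Squaring the pointwise bound and using $f(u)\le\dt\le1$ then yields the lemma with $\tau := \min\{2\bar\tau,\tfrac12\}\in(0,1)$ and $c := \underline c^{\,2}$. The only genuinely delicate point is this uniformization over the zero set: it relies on the fact that the exponent $\bar\tau$ produced by Lemma~\ref{lemma: poly-gradient-ineq} is the same at every base point $u^*$ (it depends only on $n$ and the degree), so that the finitely many local inequalities combine into one global inequality sharing a common exponent; everything else is continuity, compactness, and the identity $\nabla f|_{\cU^*}=0$.
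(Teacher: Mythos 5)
Your proof is correct, and its skeleton coincides with the paper's: apply the polynomial \L{}ojasiewicz inequality (Lemma~\ref{lemma: poly-gradient-ineq}) at the points of the zero set $\cU^*$ (where $f$ and $\na f$ vanish simultaneously), use compactness to extract a finite cover with a common constant and the common exponent $\bar\tau_{n,4}$, and then show that $f(u)\le\dt$ together with $\|u\|_2\le R$ forces $u$ into the covered neighborhood of the zero set. The one place you genuinely diverge is this last step: the paper makes it quantitative via the Hoffman error bound (Lemma~\ref{lemma: hoffman}), deducing $\dist(u,\cU^*)\le (2H^2 f(u))^{1/4}$ and hence the explicit choice $\dt=\bar\ep^4/(4H^2)$, whereas you argue by contradiction and compactness (a sequence in $\bar B_R(0)\cap\R^n_+$ with $f(u^{(k)})\to 0$ staying $\rho$-far from $\cU^*\cap \bar B_R(0)$ would have a subsequential limit in the zero set). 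Your soft argument is perfectly adequate here, since the lemma only asserts existence of constants depending on $(A,b,R)$; moreover your choices $\tau:=\min\{2\bar\tau,\tfrac12\}$ and $\dt\le 1$ handle more carefully the small-$n$ edge case where $2\bar\tau_{n,4}\ge 1$ (the paper simply sets $\tau=2\bar\tau$). What the Hoffman route buys is a $\dt$ expressed in terms of $H$ and the covering radius $\bar\ep$, but since $\bar\ep$ itself comes from a non-constructive finite subcover, both arguments ultimately deliver constants of the same non-explicit character, so the difference is one of presentation rather than strength.
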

	\begin{proof}
		Recall that $\cU^* = \{ u\in \R^n_+ ~|~ A(u\circ u) =b\}$.
		Note that $f$ is a polynomial on $\R^n$ with degree $4$, and for any $u\in \cU^*$, it holds $f(u) = 0$ and $\na f(u) = 0$. Therefore, by Lemma~\ref{lemma: poly-gradient-ineq}, for each $u\in \cU^*$, there exists $\bar \ep_u, \bar c_u>0$ such that 
		\begin{equation}
			\| \na f(v) \|_{2}^2 \ge \bar c_u |f(v)|^{2-2\bar\tau}, \quad \forall v \in B_{\bar\ep_u} ( u)
		\end{equation} 
		where $\bar\tau = \bar\tau_{n,4} = 4\cdot 9^{-(n-1)}$.
		Let $\wtd \cU^*$ be a compact subset of $\cU^*$ such that ${\rm dist} (u,\cU^*) = {\rm dist} (u,\wtd\cU^*) $ for all $u$ satisfying $\|u\|_2 \le R$. 
		Then there is a finite set $\wtd \cU \subseteq \wtd \cU^*$ such that $ \wtd\cU^* \subset \cup_{u\in \wtd \cU} B_{\bar\ep_u} (u)  $. Moreover, there is $\bar\ep>0$ such that $B_{\bar\ep}(\wtd\cU^*) \subset  \cup_{u\in \wtd \cU} B_{\bar\ep_u} (u)  $. Take $c:= \min_{u\in \wtd \cU} \bar c_u >0$, we have
		\begin{equation}\label{gg1}
			\| \na f(v) \|_{2}^2 \ge c |f(v)|^{2-2\bar\tau}, \quad \forall v \in B_{\bar\ep} ( \wtd\cU^*) . 
		\end{equation}
		Let $H$ be the constant given by \eqref{lemma: hoffman}, and 
		take $\dt := \bar\ep^4 / (4H^2)$. Then for any $u$ satisfying $\| u\|_2 \le R$ and $f(u) \le \dt$, we have
		\begin{equation}
			\begin{aligned}
				2H^2 f(u) = & H^2 \| A(u\circ u) - b\|_2^2 \ge 
				\inf_{v\in \cU^*} \| u\circ u - v\circ v\|_2^2 \\
				= & \inf_{v\in \cU^*}\| (u+v)\circ (u-v) \|_2^2
				\ge 
				\inf_{v\in \cU^*}\|u-v\|_2^4 =  {\rm dist} (u, \cU^*)^4 = {\rm dist} (u, \wtd\cU^*)^4
			\end{aligned}
		\end{equation}
		where the first inequality is by Lemma~\ref{lemma: hoffman}. 
		As a result, 
		\begin{equation}\label{gg2}
			{\rm dist} (u, \wtd\cU^*) \le ({2H^2} f(v))^{1/4}  \le \bar\ep/2 <\bar\ep . 
		\end{equation}
		Therefore, by \eqref{gg1} and \eqref{gg2} for any $u$ satisfying $\| u\|_2 \le R$ and $f(u) \le \dt$, 
		$$
		\| \na f(u) \|_{2}^2 \ge c |f(u)|^{2-2\bar\tau} .
		$$
		The proof is complete by taking $\tau = 2 \bar \tau =8\cdot 9^{-(n-1)}$. 
	\end{proof}

	\begin{lemma}\label{lemma: decreasing-rate}
		Let $\{a_k\}_{k\ge 1}$ be a sequence of decreasing positive numbers, and there are constants $K'>0$, $c'>0$ and $\tau' \in (0,1)$ such that 
		\begin{equation}\label{cond-decrease}
			a_k - a_{k+1} \ge c' a_k^{2-\tau'} \quad \forall k\ge K' . 
		\end{equation}
		Then we have 
		\begin{equation}
			a_k \le \left[  a_{K'}^{-(1-\tau')} +   c'(1-\tau') (k-K') \right]^{-\frac{1}{1-\tau'}} \quad \forall k \ge K'+1 .
			\nonumber
		\end{equation}
	\end{lemma}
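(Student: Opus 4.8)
The plan is to linearize the recursion by passing to a reciprocal-power sequence. Set $\alpha := 1-\tau' \in (0,1)$ and, for $k \ge K'$, define $b_k := a_k^{-\alpha} = a_k^{-(1-\tau')}$; this is well-defined since every $a_k$ is positive, and note the exponent identity $2-\tau' = 1+\alpha$. The whole lemma reduces to showing that $b_k$ grows at least linearly, i.e. $b_{k+1} - b_k \ge c'\alpha = c'(1-\tau')$ for every $k \ge K'$.

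To obtain this one-step estimate I would use the convexity of $\phi(t) := t^{-\alpha}$ on $(0,\infty)$. Convexity gives the tangent-line lower bound $\phi(a_{k+1}) - \phi(a_k) \ge \phi'(a_k)\,(a_{k+1}-a_k) = \alpha\, a_k^{-\alpha-1}\,(a_k - a_{k+1})$. (Equivalently, by the mean value theorem $\phi(a_{k+1})-\phi(a_k) = \phi'(\xi)(a_{k+1}-a_k)$ for some $\xi\in[a_{k+1},a_k]$; since $\phi'(t)=-\alpha t^{-\alpha-1}$ is increasing in $t$ while $a_{k+1}-a_k\le 0$ by monotonicity of $\{a_k\}$, the product is $\ge \phi'(a_k)(a_{k+1}-a_k)$.) Substituting the hypothesis $a_k - a_{k+1} \ge c'a_k^{2-\tau'} = c'a_k^{1+\alpha}$ yields $b_{k+1}-b_k \ge \alpha\, a_k^{-\alpha-1}\cdot c'a_k^{1+\alpha} = c'\alpha$, exactly as wanted.

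Summing $b_{j+1}-b_j \ge c'(1-\tau')$ over $j = K',\dots,k-1$ telescopes to $b_k \ge b_{K'} + c'(1-\tau')(k-K')$ for all $k\ge K'+1$, that is, $a_k^{-(1-\tau')} \ge a_{K'}^{-(1-\tau')} + c'(1-\tau')(k-K')$. The right-hand side is positive, and the map $x\mapsto x^{-1/(1-\tau')}$ is strictly decreasing on $(0,\infty)$ (negative exponent), so applying it to both sides reverses the inequality and gives precisely $a_k \le \bigl[a_{K'}^{-(1-\tau')} + c'(1-\tau')(k-K')\bigr]^{-1/(1-\tau')}$.

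The only step that requires any care is the convexity (or mean-value) argument that converts the multiplicative per-step decrease of $a_k$ into an additive per-step increase of $a_k^{-(1-\tau')}$; the rest is telescoping and an order-preserving algebraic rearrangement. It is worth recording explicitly that positivity of $a_{k+1}$ (part of the hypothesis) is what licenses evaluating $\phi$ and $\phi'$ at $a_{k+1}$, and that $a_{k+1}\le a_k$ is what makes the tangent-line bound point in the useful direction.
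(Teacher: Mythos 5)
Your proof is correct and follows essentially the same route as the paper: both convert the hypothesis, via a convexity (tangent-line) bound, into the per-step estimate $a_{k+1}^{-(1-\tau')}-a_k^{-(1-\tau')}\ge c'(1-\tau')$ and then telescope and invert. The only cosmetic difference is that you apply convexity directly to $t\mapsto t^{-(1-\tau')}$ at $a_k,a_{k+1}$, whereas the paper passes through $\beta_k=1/a_k$ and the convex function $\beta\mapsto\beta^{-1/(1-\tau')}$; your phrasing is, if anything, a bit more direct.
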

	\begin{proof}
		Define $\be_k = 1/a_k$ for $k\ge K'$. Then \eqref{cond-decrease} becomes
		\begin{equation}\label{ineq-1}
			\frac{1}{\be_k} - \frac{1}{\be_{k+1}} \ge \frac{c'}{\be_k^{2-\tau'}}
		\end{equation}
		Let function $h(\be):= \be^{-\frac{1}{1-\tau'}}$ for $\be \in (0,\infty)$. Note that $h$ is a convex function, so we have
		\begin{equation}\label{ineq-2}
			\begin{aligned}
				\frac{1}{\be_{k+1}} - \frac{1}{\be_{k}} = \ & h(\be_{k+1}^{1-\tau'} )	- h(\be_{k}^{1-\tau'} )		
				\ge \ h'( \be_k^{1-\tau'} ) ( \be_{k+1}^{1-\tau'} - \be_{k}^{1-\tau'}  ) \\
				= \ & - \frac{1}{1-\tau'} ( \be_k^{1-\tau'} )^{ -\frac{1}{1-\tau'}-1 } ( \be_{k+1}^{1-\tau'} - \be_{k}^{1-\tau'}  )  = \ 
				- \frac{1}{1-\tau'}  \be_k^{-(2-\tau')}  ( \be_{k+1}^{1-\tau'} - \be_{k}^{1-\tau'}  )  
			\end{aligned}
		\end{equation}
		for all $k\ge K'$. 
		By \eqref{ineq-1} and \eqref{ineq-2} we have
		\begin{equation}
			c' \be_k^{-(2-\tau')} 
			\le \frac{1}{1-\tau'}  \be_k^{-(2-\tau')}  ( \be_{k+1}^{1-\tau'} - \be_{k}^{1-\tau'}  )  ,
			\nonumber
		\end{equation}
		which implies 
		\begin{equation}
			\be_{k+1}^{1-\tau'}  - \be_{k}^{1-\tau'} \ge c' (1-\tau ') \quad \forall k\ge K' . 
			\nonumber
		\end{equation}
		As a result, for any $k\ge K'$, we have 
		\begin{equation}
			\be_k^{1-\tau'} \ge \be_{K'}^{1-\tau'} + c' (1-\tau ')  (k-K') 
			\nonumber
		\end{equation}
		which implies
		\begin{equation}
			a_k \le \left[  a_{K'}^{-(1-\tau')} +   c'(1-\tau') (k-K') \right]^{-\frac{1}{1-\tau'}} \quad \forall k \ge K'+1
			\nonumber
		\end{equation}
	\end{proof}
	
	\begin{lemma}\label{lemma: entropy-lb}
		Given $\al>0$, and
		let $h(t)= t \log(t/\al)$ for $t>0$ and $h(0) = 0$. Then $\min_{t\ge 0} h(t) = - \al /e$. 
	\end{lemma}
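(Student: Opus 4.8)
The plan is a routine single-variable calculus argument. First I would differentiate: for $t>0$ we have $h'(t) = \log(t/\al) + 1$, which vanishes exactly when $\log(t/\al) = -1$, i.e. at the unique critical point $t^* = \al/e$. Since $h''(t) = 1/t > 0$ for all $t>0$, the function $h$ is strictly convex on $(0,\infty)$, so $t^*$ is its global minimizer over $(0,\infty)$.

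Next I would evaluate $h$ at $t^*$: $h(\al/e) = (\al/e)\log\big((\al/e)/\al\big) = (\al/e)\log(e^{-1}) = -\al/e$. It remains to compare with the boundary value: $h(0) = 0 > -\al/e$ (as $\al>0$), and $h(t)\to +\infty$ as $t\to\infty$, so the infimum over $[0,\infty)$ is attained in the interior and equals $-\al/e$.

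There is no real obstacle here; the only point requiring a word of care is that $h$ is defined piecewise with $h(0)=0$, so one should explicitly note continuity of $h$ on $[0,\infty)$ (since $t\log(t/\al)\to 0$ as $t\downarrow 0$) to legitimately conclude the minimum over the closed half-line is the smaller of the interior critical value and $h(0)$. Combining these observations gives $\min_{t\ge 0} h(t) = -\al/e$.
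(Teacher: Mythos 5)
Your proof is correct and follows essentially the same route as the paper's: locate the critical point $t^*=\al/e$ via $h'(t)=\log(t/\al)+1$, invoke convexity, and evaluate $h(\al/e)=-\al/e$. Your extra remarks on continuity at $t=0$ and the boundary comparison are fine but not a substantive departure.
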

	\begin{proof}
		Let $\bar t : =\al /e$. Then we have $h'(\bar t) = \log(\bar t /\al ) + 1 = 0$. Since $h$ is a convex function, so we have
		\begin{equation}
			h(t) \ge h(\bar t) = \frac{\al}{e} \log(e^{-1}) = - \frac{\al}{e} \nonumber
		\end{equation}
		for all $t\ge 0$. 
	\end{proof}
	
	\begin{lemma}\label{lemma: log-ineq}
		For any $t>0$, it holds $(1+t)^{1+t^{-1}} \ge e$. 
	\end{lemma}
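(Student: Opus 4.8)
The plan is to take logarithms and reduce the claim to a standard one-line calculus inequality. Taking the natural logarithm of both sides, the assertion $(1+t)^{1+t^{-1}} \ge e$ is equivalent to $(1 + t^{-1})\log(1+t) \ge 1$. Since $t > 0$, we have $\frac{1+t}{t} = 1 + t^{-1} > 0$, so multiplying through by $\frac{t}{1+t}$ shows this is in turn equivalent to
\begin{equation}
	\log(1+t) \ge \frac{t}{1+t} .
	\nonumber
\end{equation}

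To establish this last inequality, I would apply the elementary bound $\log x \le x - 1$, valid for all $x > 0$, to the point $x = (1+t)^{-1}$. This gives
\begin{equation}
	-\log(1+t) = \log\frac{1}{1+t} \le \frac{1}{1+t} - 1 = -\frac{t}{1+t},
	\nonumber
\end{equation}
and rearranging yields $\log(1+t) \ge \frac{t}{1+t}$ as required. Multiplying by $\frac{1+t}{t} > 0$ and exponentiating recovers the stated bound.

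There is no genuine obstacle here; the only point to be careful about is that the factor $\frac{1+t}{t}$ by which one multiplies is strictly positive for $t > 0$, so the direction of the inequality is preserved throughout. If one prefers a self-contained monotonicity argument instead of invoking $\log x \le x-1$, one can set $\phi(s) := \log s - 1 + s^{-1}$ on $(0,\infty)$, observe that $\phi(1) = 0$ and $\phi'(s) = (s-1)/s^2$ so that $s = 1$ is the global minimizer of $\phi$, hence $\phi \ge 0$ on $(0,\infty)$, and then specialize to $s = 1+t$.
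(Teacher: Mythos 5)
Your proposal is correct and follows essentially the same route as the paper: both take logarithms to reduce the claim to $(1+t^{-1})\log(1+t)\ge 1$ and then finish with a one-line elementary calculus fact (the paper shows $(1+t)\log(1+t)-t$ is nondecreasing from $0$, while you equivalently invoke $\log x \le x-1$ at $x=(1+t)^{-1}$). The argument is complete and the sign/direction bookkeeping you flag is handled correctly.
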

	\begin{proof}
		To prove the conclusion, it suffices to prove that $(1+t^{-1}) \log(1+t) \ge 1$ for all $t>0$, or equivalently, proving $(1+t) \log(1+t) - t \ge 0$ for all $t>0$. Let $h(t):= (1+t) \log(1+t) - t$, then $h'(t) = \log(1+t) >0$ for all $t>0$. As a result, $h(t) \ge h(0) = 0$ for all $t>0$. 
	\end{proof}

		\nocite{ghai2020exponentiated}
	
	\bibliography{mybib}
	\bibliographystyle{plain}

\end{document}